\theoremstyle{plain}
\newtheorem{theorem}{Theorem}[section]
\newtheorem{lemma}[theorem]{Lemma}
\newtheorem{prop}[theorem]{Proposition}
\theoremstyle{definition}
\newtheorem{remark}[theorem]{Remark}
\newtheorem{example}[theorem]{Example}
\newtheorem{cor}[theorem]{Corollary}
\theoremstyle{remark}
\begin{document}
	\title[A unified approach to  optimization problems]{ A unified approach to a family of optimization problems in Banach spaces}
\author[K. Paul, S.Roy, D. Sain, and S. Sohel]{Kallol Paul, Saikat Roy, Debmalya Sain and  Shamim Sohel}

\address[Paul]{Department of Mathematics, Jadavpur University, Kolkata 700032, India.}	\email{kalloldada@gmail.com}

\address[Roy]{Department of Mathematics, IIT Bombay, Maharashtra 400076.}  \email{saikatroy.cu@gmail.com}

\address[Sain]{Department of Mathematics\\ Indian Institute of Information Technology, Raichur\\ Karnataka 584135 \\INDIA}	\email{saindebmalya@gmail.com}

\address[Sohel]{Department of Mathematics, Jadavpur University, Kolkata 700032, India.}	\email{shamimsohel11@gmail.com}

	\begin{abstract}
		Our principal aim is to illustrate that the concept Birkhoff-James orthogonality can be applied effectively to obtain a unified approach to a large family of optimization problems in Banach spaces. We study such optimization problems from the perspective of Birkhoff-James orthogonality in certain suitable Banach spaces. In particular, we demonstrate the duality between  the Fermat-Torricelli problem and the Chebyshev center problem which are important particular cases of the least square problem. We revisit the Fermat-Torricelli problem for three and four points and solve it using the same technique. We also investigate the behavior of the Fermat-Torricelli points under the addition or replacement of a new point, and present several new results involving the locations of the Fermat-Torricelli point and the Chebyshev center.
		
	\end{abstract}
	
	\subjclass[{2020}]{Primary 46N10, Secondary 47N10, 51N20, 46B20}
\keywords{Fermat-Torricelli problem; Chebyshev center; least square method; best approximation; Optimization; Birkhoff-James orthogonality}
\thanks{Mr Shamim Sohel would like to thank  CSIR, Govt. of India, for the financial support in the form of Senior Research Fellowship under the mentorship of Prof. Kallol Paul.} 
	
	\maketitle
	\section{Introduction}

The least square problem is one of the most important and well-studied problems in optimization theory, especially because of its deep connections with multiple scenarios of practical importance. Let $\mathbb{X}, \mathbb{Z}$ be Banach spaces and let $A$ be a bounded linear operator from $\mathbb{X}$ to $ \mathbb{Z}.$  Then the (linear) least square problem can be stated as:
\begin{equation}\label{least:eqn}
	\underset{x \in \mathbb{X}} {\text{minimize}} \quad  \|A x- z \|,
\end{equation}
where $z \in \mathbb{Z}.$ We refer  the readers to \cite{BV, DMS} for a detailed study on this problem, in the setting of Banach spaces. The purpose of this article is to study the  least square problem from the perspective of Birkhoff-James orthogonality. We investigate two of the most famous location problems falling under this category, namely, the Fermat-Torricelli problem and the Chebyshev center problem, which are extremely important and well-studied from both theoretical and practical viewpoints.\\

		In the $17$th century, Pierre de Fermat proposed the following question: \emph{Given three points in the Euclidean plane, find a point such that the sum of its distances to the three given points is minimum.} The problem was first solved by E. Torricelli and it is aptly called the Fermat-Torricelli problem. Although the original  Fermat-Torricelli problem is concerned  with only three points in the Euclidean plane, there are several natural generalizations of the same.  One such way is to consider not only three but finitely many points in a Banach space. It should be noted that although the classical Fermat-Torricelli problem in the Euclidean plane is completely (and explicitly) solved for the cases of three points and four points, only algorithmic solutions are known for more than four distinct points.  Over the years, many mathematicians have used different techniques to solve the problem and  its variants. We refer the readers to  \cite{BFS,D,F,K,K2,U,Z} and the references therein, for more information in this regard. On the other hand, the Chebyshev center problem for a bounded subset in a Banach space deals with finding the center of the minimal-radius ball containing that particular subset. As in the case of the Fermat-Torricelli problem, the Chebyshev center problem has also been widely studied using various approaches over the years. For the detailed background and other relevant information on the Chebyshev center problem, we refer to \cite{AM, BT, G, SKPR} and the references therein. However, to the best of our knowledge, \emph{no single approach has been proposed to tackle both the Fermat-Torricelli problem and the Chebyshev center problem. In this article, we would like to illustrate that the concept of orthogonality can be applied effectively to study both these problems from a single viewpoint. Moreover, our approach reveals that there is a certain kind of duality between these two problems, which perhaps went unnoticed despite extensive research on both of these two well-known problems in the theory of optimization in Banach spaces.} Let us now mention some basic notations and terminologies.\\

%Although the aid of Birkhoff-James orthogonality is used to approach the least square problem, the main focus of this article is on the Fermat-Torricelli problem and the Chebyshev center problem for finitely many points. We have demonstrated the dual nature of the Fermat-Torricelli problem and the Chebyshev center problem for finitely many points in a Banach space using the Birkhoff-James orthogonality method. 
	%We  provide a solution of  the weighted Fermat-Torricelli problem for three points and the classical problem  for four points in Euclidean plane using the concepts of Birkhoff-James orthogonality. It is worth mentioning here that  we solve the Fermat-Torricelli problem  both for three and four points  using the same technique, perhaps for the first time. We study the nature of the Fermat-Torricelli point under an addition or a replacement of a point.   We also provide a solution for the Chebyshev center for finitely many points in form of an algorithm.  Let us now quickly recall the Fermat-Torricelli problem and the Chebyshev center problem before moving on.

	Let  $ \mathbb{X}, \mathbb{Y} $ be  Banach spaces and let $ B_{\mathbb{X}}= \{ x \in \mathbb{X}: \|x\| \leq 1\} $  and $ S_{\mathbb{X}}= \{ x \in \mathbb{X}: \|x\| = 1\} $ denote  the unit ball and the unit sphere of $\mathbb{X},$ respectively.  The  dual space of $ \mathbb{X}$ is denoted by  $ \mathbb{X}^*.$ For any $f \in \mathbb{X}^*,$ the kernel of $f,$  denoted by $ker f,$ is defined as $ker f=\{x \in \mathbb{X}: f(x)=0\}.$ For a bounded linear operator $A$ from $\mathbb{X}$ to $\mathbb{Y},$ the range of the operator $A,$ to be  denoted by $\mathcal{R}(A),$ is defined as $\mathcal{R}(A)=\{Ax \in \mathbb{Y}: x \in \mathbb{X}\}.$ An element $ f \in S_{\mathbb{X}^*}$ is said to be a supporting functional at $(0 \neq) x \in \mathbb{X}$ if $f(x)= \|x\|.$ The set of all supporting functionals at $x \in S_{\mathbb{X}},$  denoted by $J(x),$ is defined as $J(x)= \{ f \in S_{\mathbb{X}^*}: f(x)= 1\}.$  An element $x  \in S_\mathbb{X}$ is said to be smooth in $ \mathbb{X} $ if $J(x)$ is a singleton. We say that $x \in S_{\mathbb{X}}$ is $k$-smooth if $dim~span ~J(x)=k,$ i.e., if  there exist exactly $k$  linearly independent supporting functionals at $x.$ $\mathbb{X}$ is said to be strictly convex if for any two distinct  elements $x , y \in S_{\mathbb{X}},$ $\|\frac{x+y}{2}\|<1.$ Equivalently, $ \mathbb{X} $ is strictly convex if and only if the unit sphere $ S_{\mathbb{X}} $ contains no non-trivial straight line segment. We use the notation $co(\{z_1, z_2, \ldots, z_n\}) $ to denote the convex hull of the points $ z_1, z_2, \ldots, z_n \in \mathbb{X},$ i.e., 
	\[ co(\{z_1, z_2, \ldots, z_n\}) = \big \{ z \in \mathbb{X} : z = \sum_{i=1}^n t_iz_i,  \sum_{i=1}^n t_i=1, t_i \geq 0 \big \} .\]
	For $z_1,z_2 \in \mathbb{X},$ the  straight line segment joining $z_1,z_2$ is denoted  by $L[z_1,z_2] ,$  i.e., $L[z_1,z_2] =
	\{ (1-t)z_1 + tz_2: 0 \leq t \leq 1 \}.$ 
	Throughout this article, for a Banach space $\mathbb{X},$ we use the notations $\ell_1^n(\mathbb{X})$ and $\ell_{\infty}^n(\mathbb{X})$ to denote the spaces $\underbrace{\mathbb{X} \oplus_1 \mathbb{X} \oplus_1 \ldots \oplus_1 \mathbb{X}}_{n-times}$ and $\underbrace{\mathbb{X} \oplus_\infty \mathbb{X} \oplus_\infty \ldots \oplus_\infty \mathbb{X}}_{n-times},$ respectively. Therefore, for any $\widetilde{x}=(x_1, x_2, \ldots, x_n) \in \ell_1^n(\mathbb{X}),$ it is clear that $ x_i \in \mathbb{X} $ and $\|\widetilde{x}\|_1= \sum_{i=1}^{n} \|x_i\|.$ Similarly, for any $\widetilde{y}=(y_1, y_2, \ldots, y_n) \in \ell_\infty^n(\mathbb{X}),$ we note that $y_i \in \mathbb{X}$ and $\|\widetilde{y}\|_\infty= \max\{ \|y_i\|: 1 \leq i \leq n \}.$\\
	
		The classical Fermat-Torricelli problem for $n$ number of  points $z_1, z_2, \ldots, z_n$ in  a Banach space $\mathbb{X}$ asks to find a point $w$ in $\mathbb{X}$ such that the sum of its distances to the $n$ given points is minimum. This implies that for any point $x$ in $\mathbb{X},$
	\[
	\|z_1 -w\| + \|z_2-w\| + \ldots+ \|z_n-w\| \leq \|z_1- x\| + \| z_2-x\| +\ldots + \|z_n-x  \|.
	\] 
	The point $w$ is called the Fermat-Torricelli point of $z_1, z_2, \ldots, z_n.$ For some fixed positive real numbers $ \alpha_1, \alpha_2, \ldots, \alpha_n,$ we say that $w $ is the weighted Fermat-Torricelli point of $z_1, z_2, \ldots, z_n$ with respective weights $ \alpha_1, \alpha_2, \ldots, \alpha_n$ if
	\[
	\alpha_1	\|z_1 -w\| +  \alpha_2\|z_2-w\| + \ldots+ \alpha_n\|z_n-w\| \leq \alpha_1 \|z_1- x\| + \alpha_2 \| z_2-x\| +\ldots + \alpha_n \|z_n-x  \|,
	\]
	for every element $x \in \mathbb{X}.$  Clearly, the classical Fermat-Torricelli problem is a weighted problem where the weights are all $1.$ 
	On the other hand, for a bounded set $A$ of a Banach space $\mathbb{X},$ $w \in \mathbb{X}$ is said to be Chebyshev center of $A$ if $\sup\{\|z- w\|: z \in A\} \leq \sup \{\|z-x\|: z \in A\},$ for any $x \in \mathbb{X}.$ If $w$ is the Chebyshev center of a bounded set $A$ of $\mathbb{X},$ then $\sup\{\|z-w\|: z\in A\}$ is said to be the Chebyshev radius of $A.$ Let $z_1, z_2, \ldots, z_n \in \mathbb{X}$ and let $\alpha_1, \alpha_2, \ldots, \alpha_n > 0. $ Then we say $w$ is the weighted Chebyshev center of $z_1, z_2, \ldots, z_n$ with respective weights $\alpha_1, \alpha_2, \ldots, \alpha_n,$ if $\sup \{ \alpha_i \| z_i -w\|: 1 \leq i \leq n\} \leq \sup \{ \alpha_i \|z_i-x\|: 1 \leq i \leq n\}.$ \\
	
	The key idea used in this article is the connection shared by Birkhoff-James orthogonality with the least square problem in a Banach space. It should be noted that there are several notions of orthogonality in a Banach space, Birkhoff-James orthogonality \cite{B, J} being arguably the most important and also the most widely studied one among them. For any two elements $ x, y $ in a Banach space $ \mathbb{X}, $  \emph{$ x $ is said to be Birkhoff-James orthogonal to $ y, $} written as $ x \perp_B y, $ if $ \| x+\lambda y \| \geq \| x \| $ for all scalars $\lambda.$  The interplay  between Birkhoff-James orthogonality and the geometric properties of the ground space $\mathbb{X}$ is wonderfully displayed in the pioneering articles \cite{J,J2}. In recent times (see \cite{S21,SR22,SPMR20,SP13}), Birkhoff-James orthogonality has found use in the study of the geometric properties of the space of bounded linear operators on $\mathbb{X}.$   We use the concept of Birkhoff-James orthogonality to completely solve the Fermat-Torricelli problem for three as well as four points. For any $y \in \mathbb{X},$ by $ ^\perp y$ we denote the collection of all elements $ x \in \mathbb{X} $ such that $ x  \perp_B y,$ i.e., $^\perp y= \{x \in \mathbb{X}: x \perp_B y\}.$ Given an element $x \in \mathbb{X}$ and a subspace $\mathbb{Y}$ of $\mathbb{X},$ we say that $y_0$ is a best approximation to $x$ out of $\mathbb{Y}$ if $\|x- y_0\|\leq \|x- y\|,$ for any $y \in \mathbb{Y}$ and in that case, $\|x- y_0\|$ is said to be the distance from $x$ to $\mathbb{Y}.$ It is easy to observe that  $y_0$ is a best approximation to $x$ out of $\mathbb{Y}$ if and only if $(x-y_0) \perp_B \mathbb{Y},$ i.e., $(x-y_0) \perp_B y,$ for any $y \in \mathbb{Y}.$  The readers are referred to \cite{S21, SR22, S70} and the references therein, for several useful applications of best approximation in Banach spaces.\\

	This article is divided into three sections including the introductory one. We first demonstrate that the Fermat-Torricelli  problem and the Chebyshev center problem are two special types of least square problem. Of course, this is well-known and is included mainly for the sake of completeness, apart from the fact that the pivotal role played by orthogonality in the whole scheme sets the tone of the article from the very beginning. Our main goal is to establish the duality between the Fermat-Torricelli  problem and the Chebyshev center problem for the weighted cases, and to present a unified approach to the respective solutions. The main results of this article are divided into two subsections, namely, Section-I and Section-II. In Section-I, we solve the weighted Fermat-Torricelli problem  for three points. As an immediate consequence of our exploration, the solution to the classical Fermat-Torricelli problem for three points follows directly. In this section we also solve the  classical Fermat-Torricelli problem for four points using the same technique. Moreover, we examine the behavior of the Fermat-Torricelli point of a given system, under the addition  or replacement of a point in the weighted settings.  Using this concept, we characterize the cases where the Fermat-Torricelli point is actually one of the given points for which we seek to determine the solution. In Section-II, we study the Chebyshev center problem and provide an algorithmic solution towards finding the Chebyshev center for finitely many points in the Euclidean plane. We also discuss the cases where the Fermat-Torricelli point and the Chebyshev center coincides for a system consisting of either three or four points. We would like to emphasize that while some of these results were known previously, we also obtain some new results which provide additional insights towards a better understanding of both these problems from a singular perspective. We end this section with  the following fundamental observation which will be essential for our study.
	
		\begin{theorem}\cite{J}\label{James}
		Let $\mathbb{X}$ be a Banach space and let $ x, y \in \mathbb{X}.$ Then $x \perp_B y$ if and only if there exists $ f \in J(x) $ such that $ f(y)=0. $
	\end{theorem}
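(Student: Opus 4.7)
The plan is to prove the two directions separately. The sufficiency direction is essentially a one-line estimate, while the necessity direction is the content of James' original argument and relies on the Hahn-Banach extension theorem applied to a functional on the subspace spanned by $x$ and $y$.

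For sufficiency, I would assume the existence of $f \in J(x)$ with $f(y) = 0$ and simply note that for every scalar $\lambda$,
\[
\|x + \lambda y\| \ \geq \ |f(x+\lambda y)| \ = \ |f(x) + \lambda f(y)| \ = \ |f(x)| \ = \ \|x\|,
\]
yielding $x \perp_B y$ directly from the definition.

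For necessity, I would first dispose of the degenerate cases $x = 0$ and $y \in \textnormal{span}\{x\}$. In the latter case, writing $y = \alpha x$, the Birkhoff-James condition $|1+\lambda\alpha|\,\|x\| \geq \|x\|$ for every scalar $\lambda$ forces $\alpha = 0$, so $y=0$ and any supporting functional at $x$ trivially satisfies $f(y) = 0$. Otherwise, with $\{x,y\}$ linearly independent, I would work on the two-dimensional subspace $M = \textnormal{span}\{x,y\}$ and define a linear functional $g$ on $M$ by
\[
g(ax + by) \ = \ a\,\|x\|.
\]
The crucial step is to verify that $\|g\|_{M} = 1$: the inequality $|g(ax+by)| \leq \|ax+by\|$ is immediate when $a = 0$, and for $a \neq 0$ the hypothesis $x \perp_B y$ applied to the scalar $b/a$ gives
\[
\|ax+by\| \ = \ |a|\, \Big\|x + \tfrac{b}{a}\, y \Big\| \ \geq \ |a|\, \|x\| \ = \ |g(ax+by)|.
\]
Since $g(x) = \|x\|$, it follows that $\|g\|_{M} = 1$. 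An application of the Hahn-Banach theorem then extends $g$ to a norm-one linear functional $f$ on $\mathbb{X}$, and by construction $f(x) = \|x\|$ and $f(y) = 0$, which places $f$ in $J(x)$ (after the obvious normalization against $\|x\|$).

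The only subtle point is segregating the linearly dependent case from the linearly independent case; once that bookkeeping is done, the argument is a textbook invocation of Hahn-Banach and presents no real obstacle. The identification of the right functional $g$ on the two-dimensional slice $M$ is the conceptual heart of the proof, as it precisely encodes the orthogonality hypothesis into a norm-one extension problem.
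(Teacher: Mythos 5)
The paper states this result without proof, citing James's 1947 paper; your argument is the standard Hahn--Banach proof of that classical theorem and is correct. Both directions check out: the sufficiency estimate is right, the reduction of the dependent case to $y=0$ is right, and the norm-one verification for $g$ on $\mathrm{span}\{x,y\}$ correctly encodes the hypothesis $x\perp_B y$ before extending.
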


\section{Connection between the least square problem, the Fermat-Torricelli problem and the Chebyshev center problem}
	We begin with a basic observation that illustrates the connection between  the least square problem in a Banach space described in  (\ref{least:eqn}) and the notion of Birkhoff-James orthogonality.
		Let $\mathbb{X}, \mathbb{Z}$ be Banach spaces and let $A$ be a bounded linear operator from $\mathbb{X}$ to $\mathbb{Z}.$ Suppose that $z \in \mathbb{Z}.$ Assume that $w \in \mathbb{X}$ satisfies the following:
		\[
		\|Aw- z\|=\underset{x \in \mathbb{X}} {\text{min}} \quad  \|A x- z \|.
		\]
	This is equivalent to 
	\begin{eqnarray*}
		\|Aw-z\| &\leq& \|u-z\|, \text{ for any} \, \, u \in \mathcal{R}(A),\\
		i.e., \|z- Aw\| & \leq & \|z -Aw + u\|, \text{ for any} \, \, u \in \mathcal{R}(A).
	\end{eqnarray*}
Therefore, the following result is immediate from the above inequalities and the definition of best approximations in a Banach space.

	\begin{prop}\label{prop:least}
		Let $\mathbb{X}, \mathbb{Z}$ be Banach spaces and let $A$ be a bounded linear operator from $\mathbb{X}$ to $\mathbb{Z}.$ Suppose that $z \in \mathbb{Z}.$  Then the following conditions are equivalent:
		\begin{itemize}
			\item[(i)] $\|Aw-z\|= 	\underset{x \in \mathbb{X}} {\text{min}} \quad  \|A x- z \|,$ for some $w \in \mathbb{X}$
			
			\item[(ii)] $(Aw- z) \perp_B \mathcal{R}(A)$
			
			\item[(iii)]    $Aw$ is a best approximation to $z$ out of $\mathcal{R}(A).$
		\end{itemize}
	\end{prop}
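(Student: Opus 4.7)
The plan is to establish the three-way equivalence by a short sequence of definition-chases; no substantive calculation is required, and indeed the chain of inequalities immediately preceding the statement already performs most of the rewriting.

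First I would observe that the set $\{Ax : x \in \mathbb{X}\}$ is, by definition, the range $\mathcal{R}(A)$. Hence condition (i) asserts that $Aw$ attains $\min_{u \in \mathcal{R}(A)} \|u - z\|$, which is precisely the definition of $Aw$ being a best approximation to $z$ out of $\mathcal{R}(A)$. This yields (i)\,$\Longleftrightarrow$\,(iii) at once; the displayed inequalities before the proposition make this passage explicit.

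Next, for (ii)\,$\Longleftrightarrow$\,(iii), I would invoke the standard fact recalled in the introduction: for a subspace $\mathbb{Y}$ of a Banach space and any element $x$, the point $y_0 \in \mathbb{Y}$ is a best approximation to $x$ out of $\mathbb{Y}$ if and only if $(x - y_0) \perp_B \mathbb{Y}$. Applying this with $\mathbb{Y} = \mathcal{R}(A)$ (a linear subspace of $\mathbb{Z}$, because $A$ is linear) and $y_0 = Aw$, one obtains that $Aw$ is a best approximation to $z$ out of $\mathcal{R}(A)$ if and only if $(z - Aw) \perp_B \mathcal{R}(A)$. The only subtlety is that the statement in (ii) is written with the opposite sign, as $(Aw - z) \perp_B \mathcal{R}(A)$; but since $\lambda$ in the definition $\|x + \lambda y\| \geq \|x\|$ ranges over all scalars and $\mathcal{R}(A)$ is closed under scalar multiplication, the two conditions are identical.

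There is no genuine obstacle in this proof; it is purely a matter of unraveling the definitions of best approximation and Birkhoff-James orthogonality in the presence of the linear subspace $\mathcal{R}(A)$. The proposition is therefore best viewed as a clean packaging of the identification between the three ways of expressing the least-square condition, which in turn sets the stage for the subsequent specializations to the Fermat-Torricelli and Chebyshev center problems.
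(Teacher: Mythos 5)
Your proposal is correct and follows essentially the same route as the paper, which simply declares the proposition ``immediate from the above inequalities and the definition of best approximations'' together with the orthogonality characterization of best approximation recalled in the introduction. Your additional remark that the sign discrepancy between $(Aw-z)$ and $(z-Aw)$ is harmless (since Birkhoff--James orthogonality to a subspace is invariant under negation of the first argument) is a worthwhile clarification that the paper leaves implicit.
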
 
		
	Let us now show that the Fermat-Torricelli problem is indeed a  least square problem. Let $z_1, z_2, \ldots, z_n \in \mathbb{X}$ and  let $\alpha_1, \alpha_2, \ldots, \alpha_n > 0. $ In the least square problem described in (\ref{least:eqn}), take $\mathbb{Z}=  \ell_1^n(\mathbb{X})$ and $z= (\alpha_1 z_1,\alpha_2 z_2, \ldots, \alpha_n z_n) \in \ell_1^n(\mathbb{X}).$ Consider the linear operator $A: \mathbb{X} \to \ell_1^n(\mathbb{X})$ given by $Ax=(\alpha_1x, \alpha_2x, \ldots, \alpha_nx),$ for every $x \in \mathbb{X}.$   Then  the problem (\ref{least:eqn})  reduces to:
	\[
	\underset{x \in \mathbb{X}}{ \text{minimize}} \|(\alpha_1 x, \alpha_2 x, \ldots, \alpha_n x)- z\|_1.
	\]	
 Suppose that $w \in \mathbb{X}$ is a solution to the above minimization problem. Then we have  that for any $x \in \mathbb{X},$
			$$	\|( \alpha_1w, \alpha_2w, \ldots, \alpha_nw)-z\|_1 \leq \|(\alpha_1x, \alpha_2 x, \ldots, \alpha_n x)- z\|_1 $$ 
			and so, 
\[	\alpha_1	\|z_1-w\|+ \alpha_2\|z_2-w\|+\ldots+ \alpha_n\|z_n-w\| \leq \alpha_1\|z_1-x\|+\alpha_2\|z_2-x\|+\ldots+ \alpha_n\|z_n-x\|.\]
This shows that $w$ is the weighted Fermat-Torricelli point of $z_1, z_2, \ldots, z_n$ with the respective weights $\alpha_1, \alpha_2, \ldots, \alpha_n.$ Thus  the Fermat-Torricelli problem is a special type of the least square problem.  Next we express the Fermat-Torricelli problem in terms of Birkhoff-James orthogonality.  

	\begin{prop}\label{Fermat}
	Let $\mathbb{X}$ be a Banach space and let $z_1, z_2, \ldots, z_n \in \mathbb{X}.$ Suppose that $ \alpha_1, \alpha_2, \ldots, \alpha_n > 0 $ and $\mathbb{Y}=\{(\alpha_1x, \alpha_2 x, \ldots, \alpha_n x): x \in \mathbb{X}\}$ is a subspace of $\ell_1^n(\mathbb{X}).$ Let $ z =  (\alpha_1z_1, \alpha_2z_2, \ldots, \alpha_nz_n),\widetilde{w} = (\alpha_1w,\alpha_2w, \ldots, \alpha_nw)  \in \ell_1^n(\mathbb{X}).$   Then the following conditions are equivalent:
	\begin{itemize}
		\item[(i)]  $w \in \mathbb{X}$ is the weighted Fermat-Torricelli point of $z_1, z_2, \ldots, z_n$ with respective weights $\alpha_1, \alpha_2, \ldots, \alpha_n$ 
		
		\item[(ii)] 	$ (z - \widetilde{w}) \perp_B \mathbb{Y}, $ in the Banach space $\ell_1^n(\mathbb{X})$
		
		\item[(iii)] $\widetilde{w}$ is the best approximation to $z$ out of $\mathbb{Y},$ in the Banach  space $\ell_1^n(\mathbb{X}).$ 
	\end{itemize}
\end{prop}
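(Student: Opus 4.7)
The plan is to treat Proposition \ref{Fermat} as a direct corollary of Proposition \ref{prop:least}: the construction that packages the Fermat-Torricelli problem into the least square framework has already been laid out in the paragraph preceding the statement, so the job is essentially a bookkeeping one and the three conditions should fall out in a single chain.

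First I would recall the bounded linear operator $A \colon \mathbb{X} \to \ell_1^n(\mathbb{X})$ given by $Ax = (\alpha_1 x, \alpha_2 x, \ldots, \alpha_n x)$, for which $\mathcal{R}(A) = \mathbb{Y}$ and $Aw = \widetilde{w}$ by construction; its boundedness follows from $\|Ax\|_1 = \bigl(\sum_{i=1}^n \alpha_i\bigr)\|x\|$. Using the definition of the $\ell_1^n(\mathbb{X})$ norm, I would verify the identity
\[
\|Ax - z\|_1 \;=\; \sum_{i=1}^n \alpha_i \|x - z_i\|
\]
for every $x \in \mathbb{X}$. This translates condition (i), that $w$ is the weighted Fermat-Torricelli point of $z_1, \ldots, z_n$ with weights $\alpha_i$, into the statement $\|Aw - z\|_1 = \min_{x \in \mathbb{X}} \|Ax - z\|_1$, which is precisely the hypothesis of Proposition \ref{prop:least}.

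Next I would invoke Proposition \ref{prop:least} to obtain the equivalence of this minimization condition with $(Aw - z) \perp_B \mathcal{R}(A)$ and with the statement that $Aw$ is a best approximation to $z$ out of $\mathcal{R}(A)$. Two small reformulations then finish the proof: (a) $(Aw - z) \perp_B \mathcal{R}(A)$ is equivalent to $(z - \widetilde{w}) \perp_B \mathbb{Y}$, since Birkhoff-James orthogonality is invariant under replacing the first argument $u$ by $-u$ (one substitutes $\lambda \mapsto -\lambda$ in the defining inequality $\|u + \lambda y\| \geq \|u\|$), and (b) $\mathcal{R}(A) = \mathbb{Y}$ and $Aw = \widetilde{w}$, giving (ii) and (iii) in the stated form.

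There is no real obstacle to overcome here: the proposition is a translation exercise, and the only points requiring a line of justification are the sign-invariance of Birkhoff-James orthogonality in its first slot and the explicit verification that the $\ell_1^n(\mathbb{X})$ norm reduces cleanly to the weighted sum $\sum_{i=1}^n \alpha_i \|x - z_i\|$. The conceptual content of the proposition lies not in its proof but in the recognition that it repackages the Fermat-Torricelli problem as a best-approximation problem in $\ell_1^n(\mathbb{X})$, an observation which the remainder of the paper will exploit.
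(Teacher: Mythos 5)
Your proposal is correct and follows exactly the route the paper intends: the paper gives no separate proof of Proposition \ref{Fermat}, treating it as immediate from the preceding paragraph, which constructs the same operator $A$ with $\mathcal{R}(A)=\mathbb{Y}$ and reduces the weighted Fermat--Torricelli problem to the least square problem so that Proposition \ref{prop:least} applies. Your two explicit justifications (the norm identity $\|Ax-z\|_1=\sum_i\alpha_i\|x-z_i\|$ and the sign-invariance of $\perp_B$ in its first slot) are exactly the details the paper leaves implicit.
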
  
		
 Analogously, it is rather easy to observe that the Chebyshev center problem for finitely many points is also a least square problem. Let $z_1, z_2, \ldots, z_n \in \mathbb{X} $  and let $\alpha_1, \alpha_2, \ldots, \alpha_n > 0. $ In the least square problem described in (\ref{least:eqn}), take $\mathbb{Z} = \ell_\infty^n(\mathbb{X})$ and  let $z= (\alpha_1 z_1,\alpha_2 z_2, \ldots, \alpha_n z_n) \in  \ell_\infty^n(\mathbb{X}).$  Consider the linear operator $A: \mathbb{X} \to \ell_\infty^n(\mathbb{X})$ given by $Ax=(\alpha_1x, \alpha_2x, \ldots, \alpha_nx),$ for every $x \in \mathbb{X}.$   Then the problem (\ref{least:eqn})  reduces to:
  \[
  \underset{x \in \mathbb{X}}{ \text{minimize}} \|(\alpha_1 x, \alpha_2 x, \ldots, \alpha_n x)- z\|_{\infty}.
  \]	
   Suppose that $w \in \mathbb{X}$ satisfies the above minimization problem. Then we have that for any $x \in \mathbb{X},$
  $$	\|( \alpha_1w, \alpha_2w, \ldots, \alpha_nw)-z\|_{\infty} \leq \|(\alpha_1x, \alpha_2 x, \ldots, \alpha_n x)- z\|_{\infty}$$ 
  and so, 
  \[ \max \{	\alpha_i	\|z_i-w\|: 1 \leq i \leq n  \} \leq  \max \{\alpha_i\|z_i-x\|:  1 \leq i \leq n\}.\]
  Thus $w$ is the weighted Chebyshev center of $z_1, z_2, \ldots, z_n$ with respective weights $\alpha_1, \alpha_2, \ldots, \alpha_n.$ This establishes that the Chebyshev center problem is also a special type of least square problem. Next  we express the Chebyshev center problem in terms of Birkhoff-James orthogonality.

	\begin{prop}\label{Cheby}
		Let $\mathbb{X}$ be a Banach space and let $z_1, z_2, \ldots, z_n \in \mathbb{X}.$ Suppose that $ \alpha_1, \alpha_2, \ldots, \alpha_n > 0 $ and $\mathbb{Y}=\{(\alpha_1x, \alpha_2 x, \ldots, \alpha_n x): x \in \mathbb{X}\}$ is a subspace of $\ell_\infty^n(\mathbb{X}).$ Let $ z =  (\alpha_1z_1, \alpha_2z_2, \ldots, \alpha_nz_n),\widetilde{w} = (\alpha_1w,\alpha_2w, \ldots, \alpha_nw)  \in \ell_\infty^n(\mathbb{X}).$   Then the following conditions are equivalent:
		\begin{itemize}
			\item[(i)]  $w \in \mathbb{X}$ is the weighted Chebyshev center  of $z_1, z_2, \ldots, z_n$ with respective weights $\alpha_1, \alpha_2, \ldots, \alpha_n$ 
			
			\item[(ii)] 	$ (z - \widetilde{w}) \perp_B \mathbb{Y} ,$ in the Banach space $\ell_\infty^n(\mathbb{X})$
			
			\item[(iii)] $\widetilde{w}$ is the best approximation to $z$ out of $\mathbb{Y},$ in the Banach  space $\ell_\infty^n(\mathbb{X}).$ 
		\end{itemize}
	\end{prop}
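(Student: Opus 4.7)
The plan is to recognize that Proposition \ref{Cheby} is a direct translation of Proposition \ref{prop:least} to the specific linear operator $A\colon\mathbb{X}\to\ell_\infty^n(\mathbb{X})$ defined by $Ax=(\alpha_1 x,\alpha_2 x,\ldots,\alpha_n x)$, and that the work reducing the weighted Chebyshev center problem to the least square problem for this $A$ has already been carried out in the paragraph preceding the statement.

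First I would observe that $\mathcal{R}(A)=\mathbb{Y}$ and that $Aw=\widetilde{w}$. Applying Proposition \ref{prop:least} with $\mathbb{Z}=\ell_\infty^n(\mathbb{X})$ and this specific $A$ then yields the equivalence of the three conditions $\|Aw-z\|_\infty=\min_{x\in\mathbb{X}}\|Ax-z\|_\infty$, $(Aw-z)\perp_B\mathcal{R}(A)$, and $Aw$ is a best approximation to $z$ out of $\mathcal{R}(A)$.

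Next I would translate each of these into the language of the statement. The first is equivalent to (i) by the calculation $\|Ax-z\|_\infty=\max_i\alpha_i\|z_i-x\|$ performed immediately before the proposition, together with the definition of the weighted Chebyshev center. The second is equivalent to (ii) because Birkhoff-James orthogonality is insensitive to sign change in the first argument: $\|(Aw-z)+\lambda u\|\geq\|Aw-z\|$ for all scalars $\lambda$ becomes, after replacing $\lambda$ by $-\lambda$ and using $\|z-Aw\|=\|Aw-z\|$, the same as $\|(z-\widetilde{w})+\lambda u\|\geq\|z-\widetilde{w}\|$. The third is identical to (iii) since $Aw=\widetilde{w}$.

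There is no genuine obstacle here; the argument is routine once Proposition \ref{prop:least} and the already-established least-square reformulation are in place. The proof is structurally identical to that of Proposition \ref{Fermat}, with $\ell_1^n(\mathbb{X})$ replaced by $\ell_\infty^n(\mathbb{X})$, and indeed the only ingredient specific to the $\ell_\infty$ setting is the pointwise computation $\|(\alpha_1 v,\ldots,\alpha_n v)\|_\infty=\max_i\alpha_i\|v\|$ used to rewrite the minimization in the form of the Chebyshev center condition.
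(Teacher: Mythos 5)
Your proposal is correct and follows exactly the route the paper intends: the paper states Proposition \ref{Cheby} as an immediate consequence of Proposition \ref{prop:least} applied to $Ax=(\alpha_1 x,\ldots,\alpha_n x)$ together with the identity $\|Ax-z\|_\infty=\max_i\alpha_i\|z_i-x\|$ established in the preceding paragraph. Your explicit remarks that $\mathcal{R}(A)=\mathbb{Y}$, $Aw=\widetilde{w}$, and that Birkhoff--James orthogonality is unaffected by replacing $Aw-z$ with $z-\widetilde{w}$ fill in precisely the routine details the paper leaves implicit.
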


Observe  that whenever $ \mathbb{X} $ is finite-dimensional (such as the classical Fermat-Torricelli problem), it can be identified with $ \mathbb{X}^* $ \emph{upto vector space structure.} Therefore, by virtue of the well-known fact that $ \ell_1^n(\mathbb{X})^* = \ell_\infty^n(\mathbb{X}^*), $ the above problems are indeed dual in nature. From Proposition \ref{Fermat} and Proposition \ref{Cheby},  it follows that the Fermat-Torricelli problem for $n$-number of points in $\mathbb{X}$ is an orthogonality problem in the space $\ell_1^n(\mathbb{X}),$ whereas the Chebyshev center problem  for $n$-number of points in $\mathbb{X}$ is the same orthogonality problem in the space $\ell_\infty^n(\mathbb{X}).$ For the classical versions of the Fermat-Torricelli problem and the Chebyshev problem in the complex plane, this duality is even more clear. Indeed, in this case, $ \mathbb{X} = \mathbb{X}^* = \mathbb{C}. $ Consequently, the two problems are exactly dual to one another. It should be further noted that whenever $\mathbb{X}$ is finite-dimensional, the existence of the Fermat-Torricelli point and the Chebyshev  center  is guaranteed from the fact that the best approximation to any point out of a finite-dimensional subspace always exists.

	\section{Main Results.}
	
	In this section we study the Fermat-Torricelli problem and the Chebyshev center problem in the complex plane.  For the rest of the article, the spaces $\ell_1^n$ and $\ell_{\infty}^n$ are considered over the complex field $\mathbb{C}.$

	\section*{Section-I}
	
We begin this section with a study of the Fermat-Torricelli problem for three and four points in the complex plane. Although the corresponding solutions are previously known, it is worth mentioning here that  we solve the Fermat-Torricelli problem for both three and four points \emph{using the same technique, perhaps for the first time}. Previously obtained solutions, as can be seen in \cite{GT, HZ, P, S96, T, U, Z2}, use different methods for these cases. It is worth mentioning that in \cite{T}, the Fermat-Torricelli point has been characterized for $n$-number of points, but the solution is far from explicitly determining the exact location of the desired point. On the other hand, our approach solves the problem explicitly for both three points and four points in similar fashion. Furthermore, we also provide a complete characterization of the Fermat-Torricelli point for $n$-number of points, in terms of Birkhoff-James orthogonality. In particular, this also illustrates the importance of the concept of Birkhoff-James orthogonality towards a unified approach for such kind of problems.\\

	For any three distinct complex numbers $u, v, w$ we 
	use the notation $\angle (\overrightarrow{uv}, \overrightarrow{uw})$ to denote  the angle at $u$ formed by the line segments $L[u,v]$ and $L[u, w],$
	measured from $L[u,v]$ to $L[u, w]$		in the positive direction. In particular, $\angle (\overrightarrow{uv}, \overrightarrow{uw})= \theta$ implies that 
	$(u-w)= \lambda(u-v)e^{i\theta},$ for some $\lambda> 0.$ It is easy to observe that if $\angle (\overrightarrow{uv}, \overrightarrow{uw})= \theta$ then $\angle (\overrightarrow{uw}, \overrightarrow{uv})= 2\pi-\theta.$ 
	Moreover, for any two $ v, w \in \mathbb{C},$ we use the notation $\angle (\overrightarrow{v}, \overrightarrow{w})$ to denote the angle at 
	$0 \in \mathbb{C}$ formed by the line segments $L[0,v]$ and $ L[0,w],$ measured from $L[0,v]$ to $L[0,w]$ in the positive direction.
	%	We use the notation $\langle [u,v], [u,w] \rangle= \theta$ to denote that  $(u-w)= \lambda(u-v)e^{i\theta},$ for some $\lambda> 0.$ It is easy to observe that if $\langle [u,v], [u,w] \rangle= \theta$ then $\langle [u,w], [u,v] \rangle= \pi-\theta.$ 
	Clearly,  $u, v, w$ are collinear whenever $\angle (\overrightarrow{uv}, \overrightarrow{uw})=  n\pi,$ for $n  = 0,1,2, \ldots.$
		Using  Theorem \ref{James} and Proposition \ref{Fermat},   it follows  that $ w \in \mathbb{C}$ is the weighted Fermat-Torricelli point for the $n$ complex numbers $z_1,z_2, \ldots, z_n$ with respective weights $\alpha_1,\alpha_2, \ldots, \alpha_n$ if and only if there exists a supporting functional $f \in (\ell_1^n)^*$ at $(z-wy)$ such that $f(y) =0,$ where $z=(\alpha_1 z_1, \alpha_2 z_2, \ldots, \alpha_n z_n), y=(\alpha_1, \alpha_2, \ldots, \alpha_n) \in \ell_1^n.$ Therefore, given $z, y\in \ell_1^n,$ it is required to find a complex number $w$ such that $f(y)=0$ and $ f(z-wy) = \|f\| \|z-wy\|,$ for some bounded linear functional $f$ on $\ell_1^n.$  In this context, we  need the following lemma.

	\begin{lemma}\label{lemma1}
		Let $f$ be a supporting functional at  $ u=(u_1, u_2, \ldots, u_n) \in S_{\ell_1^n}$  such that 
 		\[ f(v_1,v_2, \ldots, v_n) = x_1v_1+v_2v_2 + \ldots + x_nv_n, \forall v = (v_1,v_2, \ldots, v_n) \in \ell_1^n,\]
		for some complex numbers $x_1,x_2, \ldots, x_n.$  
		If  $|x_{i}| <1$ for some $i$ then $u_{i}=0.$
	\end{lemma}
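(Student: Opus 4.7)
The plan is to exploit the isometric identification $(\ell_1^n)^* \cong \ell_\infty^n$ to pin down $\|f\|$ and then extract the desired vanishing via an equality case in the triangle inequality. First, I would record what it means for $f$ to be a supporting functional at $u \in S_{\ell_1^n}$: namely, $\|f\| = 1$ and $f(u) = \|u\|_1 = 1$. Under the identification above, the norm of the functional $(v_1, \ldots, v_n) \mapsto \sum_i x_i v_i$ is $\max_i |x_i|$, so the supporting hypothesis translates into
\[
\max_{1 \leq i \leq n} |x_i| = 1 \quad \text{and} \quad \sum_{i=1}^n x_i u_i = 1.
\]

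Next I would run the elementary chain
\[
1 = \Bigl| \sum_{i=1}^n x_i u_i \Bigr| \leq \sum_{i=1}^n |x_i|\,|u_i| \leq \Bigl( \max_{1 \leq j \leq n} |x_j| \Bigr) \sum_{i=1}^n |u_i| = 1 \cdot 1 = 1.
\]
Since the two outer quantities coincide, both inequalities in the middle must be equalities. In particular, the second inequality being an equality forces $|x_i| \cdot |u_i| = (\max_j |x_j|) \cdot |u_i| = |u_i|$ for every index $i$. Thus, for any index $i$ with $u_i \neq 0$ one must have $|x_i| = 1$.

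Contrapositively, if $|x_i| < 1$ for some $i$, then $u_i = 0$, which is exactly the statement. I do not expect any real obstacle here: the entire argument is the standard equality-case analysis of the duality pairing between $\ell_1^n$ and $\ell_\infty^n$, so the only thing to be careful about is clearly separating the two pieces of information encoded in the words \emph{supporting functional at $u$} (namely, $\|f\| = 1$ and $f(u) = 1$) before feeding them into the triangle inequality.
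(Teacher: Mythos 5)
Your proof is correct and is essentially the paper's argument: both rest on the two facts $\|f\|_{\infty}=\max_i|x_i|= 1$ and $\sum_i x_iu_i=1$, and both squeeze the chain $1=\lvert\sum_i x_iu_i\rvert\le\sum_i|x_i||u_i|\le\sum_i|u_i|=1$ to force $|x_i||u_i|=|u_i|$ for every $i$. The paper merely phrases the equality-case analysis as a proof by contradiction (assuming $u_j\neq 0$ with $|x_j|<1$ yields the strict inequality $1<1$), which is the contrapositive of your direct version.
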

	
	\begin{proof} Since $f$ is a supporting functional at $u,$ it follows that 
		$f(u) = x_1u_1 + x_2u_2 + \ldots + x_nu_n = 1 $ and so $|x_i| \leq 1$ for all $i.$ 
		Let $N(x) $ denote the collection of all $i$ such that $|x_i| <1,$ i.e., $N(x) = \{ i \in \{ 1,2, \ldots, n\} : |x_i| <1\}.$ We claim that $u_i =0$ for each $i \in N(x).$ 
		Suppose on the contrary that there exists $ j \in N(x)$ such that  $ u_j \neq 0.$  Then we get
		\[
		1= 
		|\sum_{i=1}^{n} x_i u_i | \leq \sum_{i \neq j} | x_i u_i| +  |x_j u_j| < \sum_{i=1}^{n}|u_i|=1.\] 
		This establishes our claim.
	\end{proof}

	\section*{ Weighted Fermat-Torricelli Problem For 3 points}

	We now focus our attention to  the weighted Fermat-Torricelli problem of three points $z_1, z_2, z_3$ with respective weights $ \alpha_1, \alpha_2, \alpha_3,$ where each $\alpha_i > 0.$ First we prove the following two elementary observations.

	\begin{prop}\label{first}
		Let $z_1, z_2, z_3$ be  distinct complex numbers and let $\alpha_1, \alpha_2, \alpha_3 > 0.$  
		\begin{itemize}
			\item[(i)]  Suppose  $\alpha_i >  \alpha_j+ \alpha_k,$ for some  $i,j,k \in \{1,2,3\}.$  Then $z_i$ is the unique weighted Fermat-Torricelli point for $z_1, z_2, z_3$ with respective weights $ \alpha_1, \alpha_2, \alpha_3.$
			
			\item[(ii)]  Suppose  $\alpha_i = \alpha_j+ \alpha_k,$ for some  $i,j,k \in \in \{1,2,3\}.$  If $ z_j  \in L[z_i,z_k] $ $($ or $ z_k \in L[z_i,z_j] )$   then  every element $ w \in L[z_i,z_j]
			$ $($ or $ L[z_i,z_k] )$   is a  weighted Fermat-Torricelli point for $z_1, z_2, z_3$ with respective weights $ \alpha_1, \alpha_2, \alpha_3.$ Otherwise, $z_i$ is the unique weighted Fermat-Torricelli point. 
		\end{itemize}

	\end{prop}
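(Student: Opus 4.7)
The plan is to apply Proposition \ref{Fermat} together with Theorem \ref{James}: a point $w$ is a weighted Fermat--Torricelli point for $z_1,z_2,z_3$ with weights $\alpha_1,\alpha_2,\alpha_3$ if and only if there exists a supporting functional $f(v_1,v_2,v_3)=x_1v_1+x_2v_2+x_3v_3$ at $z-\widetilde{w}=(\alpha_1(z_1-w),\alpha_2(z_2-w),\alpha_3(z_3-w))$ in $\ell_1^3$ with $f(y)=0$, where $y=(\alpha_1,\alpha_2,\alpha_3)$. The condition that $f$ supports $z-\widetilde{w}$ forces $|x_i|\le 1$ by Lemma \ref{lemma1}, and, unpacking the equality $f(z-\widetilde{w})=\|z-\widetilde{w}\|_1$ coordinatewise, forces $x_i=\overline{z_i-w}/|z_i-w|$ (so $|x_i|=1$) whenever $w\ne z_i$. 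The orthogonality condition then becomes the single linear relation $\alpha_1 x_1+\alpha_2 x_2+\alpha_3 x_3=0$, which is an easily manageable constraint on the unit complex numbers $x_i$ (with one $x_i$ free in the closed disk when $w$ coincides with some $z_i$).

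For part (i), assume without loss of generality that $\alpha_1>\alpha_2+\alpha_3$. To show $w=z_1$ works, I would fix $x_2=\overline{z_2-z_1}/|z_2-z_1|$ and $x_3=\overline{z_3-z_1}/|z_3-z_1|$, then solve $x_1=-(\alpha_2 x_2+\alpha_3 x_3)/\alpha_1$ and observe $|x_1|\le(\alpha_2+\alpha_3)/\alpha_1<1$, so such a supporting functional exists. For uniqueness, I would split on the locus of $w$: if $w\ne z_1,z_2,z_3$, then all $|x_i|=1$ and the relation gives $\alpha_1\le \alpha_2+\alpha_3$, contradicting the hypothesis; if $w=z_2$ (say), then $|x_1|=|x_3|=1$ are forced, while $|x_2|=|\alpha_1 x_1+\alpha_3 x_3|/\alpha_2\ge(\alpha_1-\alpha_3)/\alpha_2>1$, again a contradiction.

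For part (ii), assume $\alpha_1=\alpha_2+\alpha_3$. If $z_2\in L[z_1,z_3]$ and $w\in L[z_1,z_2]$, then $w,z_1,z_2,z_3$ are collinear in that order, so $z_2-w$ and $z_3-w$ point the same way and $z_1-w$ points the opposite way; consequently $x_2=x_3=-x_1$ all have modulus $1$, and $\alpha_1 x_1+\alpha_2 x_2+\alpha_3 x_3=(\alpha_2+\alpha_3-\alpha_1)x_2=0$, verifying the orthogonality. The case $z_3\in L[z_1,z_2]$ is symmetric. Finally, if neither collinearity holds, the same uniqueness argument as in part (i) applies: for $w=z_2$ the equality case of $|\alpha_1 x_1+\alpha_3 x_3|\ge|\alpha_1-\alpha_3|=\alpha_2$ would require $x_1=-x_3$, i.e., $z_2\in L[z_1,z_3]$, contradicting the hypothesis; analogously $w\ne z_3$; and for $w\ne z_1,z_2,z_3$ the equality $\alpha_1=|\alpha_2 x_2+\alpha_3 x_3|$ forces $x_2=x_3$ and then $x_1=-x_2$, which translates to one of the excluded collinearities.

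The technical heart, and the only step that requires care, is tracking how the equality cases in $|\alpha_1 x_1+\alpha_3 x_3|\ge|\alpha_1-\alpha_3|$ (and in the analogous inequality with three terms) correspond \emph{exactly} to the geometric collinearities $z_j\in L[z_i,z_k]$; the rest of the proof is just applying the equivalent orthogonality characterization and solving a single linear relation in three unit complex numbers.
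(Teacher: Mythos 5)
Your proof is correct, but it takes a genuinely different route from the paper's. The paper disposes of Proposition \ref{first} by bare hands: for (i) it writes $\alpha_3|z_3-x| > (\alpha_1+\alpha_2)|z_3-x|$ is replaced by the regrouping $\alpha_1(|z_1-x|+|z_3-x|)+\alpha_2(|z_2-x|+|z_3-x|) \geq \alpha_1|z_1-z_3|+\alpha_2|z_2-z_3|$, and for (ii) it tracks exactly when the triangle inequality $|z_j-x|+|z_i-x|\geq|z_j-z_i|$ is an equality (namely $x\in L[z_i,z_j]$). No orthogonality enters at all. You instead push the statement through the machinery of Proposition \ref{Fermat}, Theorem \ref{James} and the description of supporting functionals on $\ell_1^3$, reducing everything to the solvability of $\alpha_1x_1+\alpha_2x_2+\alpha_3x_3=0$ with $x_i=\overline{z_i-w}/|z_i-w|$ forced whenever $w\neq z_i$ and $x_i$ free in the closed disk otherwise. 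Your route is heavier for this particular statement but is exactly the technique the paper deploys later (Theorems \ref{Fermat-Torricelli set 3} and \ref{second}), so it has the virtue of uniformity; the paper's direct argument is shorter, needs no existence discussion, and incidentally proves the stronger fact (not claimed in the statement) that in the collinear sub-case of (ii) the set of Fermat--Torricelli points is \emph{exactly} the segment $L[z_i,z_j]$.

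Three small points to tighten. First, in part (ii), case $z_2\in L[z_1,z_3]$, the order on the line is $z_1,w,z_2,z_3$, not $w,z_1,z_2,z_3$ as you wrote; your conclusion ($x_2=x_3=-x_1$) is the right one for the correct order, and you should also dispatch the endpoint cases $w=z_1$ and $w=z_2$, where one coordinate of $z-\widetilde{w}$ vanishes and the corresponding $x_i$ must be chosen (it can be, with modulus $1$). Second, in the ``otherwise'' branch of (ii) you prove uniqueness but never assert that $z_1$ \emph{is} a Fermat--Torricelli point; the same construction as in (i) works, with $|x_1|\leq(\alpha_2+\alpha_3)/\alpha_1=1$ now an equality rather than a strict inequality, so this is a one-line fix. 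Third, the equality analyses you defer to (``the technical heart'') do all go through: $|\alpha_2x_2+\alpha_3x_3|=\alpha_2+\alpha_3$ for unimodular $x_2,x_3$ forces $x_2=x_3$, and $|\alpha_1x_1+\alpha_3x_3|=\alpha_1-\alpha_3$ forces $x_3=-x_1$, which translate precisely to the stated collinearities.
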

	
	\begin{proof}
		(i) Without loss of generality we assume that $\alpha_3 >  \alpha_1+ \alpha_2.$
		For any $ x ( \neq z_3) \in \mathbb{C} ,$
		\begin{eqnarray*}
			\alpha_1|z_1-x| + \alpha_2 |z_2-x|+ \alpha_3 |z_3-x| & >& \alpha_1| z_1-x|    + \alpha_2 | z_2-x| +(\alpha_1+ \alpha_2)|z_3-x| \\
			&=& \alpha_1 ( |z_1-x|+|z_3-x|)+ \alpha_2 (|z_2-x|+|z_3-x|)\\
			&\geq & \alpha_1 |z_1-z_3|+ \alpha_2 |z_2-z_3|.
		\end{eqnarray*}
		This implies that $z_3$ is the  unique weighted Fermat-Torricelli point.
		
		(ii)  Without loss of generality we assume that $\alpha_3 =  \alpha_1+ \alpha_2$  and $z_2 \in L[z_1,z_3].$ Then for each $w \in L[z_2,z_3]$ we get,
		\begin{eqnarray*}
			\alpha_1|z_1-w| + \alpha_2 |z_2-w|+ \alpha_3 |z_3-w| & = & \alpha_1| z_1-w|    + \alpha_2 | z_2-w| +(\alpha_1+ \alpha_2)|z_3-w| \\
			&=& \alpha_1 ( |z_1-w|+|z_3-w|)+ \alpha_2 (|z_2-w|+|z_3-w|)\\
			&=  & \alpha_1 |z_1-z_3|+ \alpha_2 |z_2-z_3|.
		\end{eqnarray*}
		Again for any $ x \notin L[z_2,z_3] $ we get,  
		$	|z_2-x|+ |z_3-x|> |z_2-z_3|. $
		Therefore, 
		\begin{eqnarray*}
			\alpha_1|z_1-x| + \alpha_2 |z_2-x|+ \alpha_3 |z_3-x| & = & \alpha_1| z_1-x|    + \alpha_2 | z_2-x| +(\alpha_1+ \alpha_2)|z_3-x| \\
			&=& \alpha_1 ( |z_1-x|+|z_3-x|)+ \alpha_2 (|z_2-x|+|z_3-x|)\\
			&>  & \alpha_1 |z_1-z_3|+ \alpha_2 |z_2-z_3|.
		\end{eqnarray*}
		Thus each   $w \in L[z_2,z_3]$  is a weighted Fermat-Torricelli point and no other point is a weighted Fermat-Torricelli point. If $ z_1 \in L[z_2,z_3]$ then similarly we can show that  each   $w \in L[z_1,z_3]$  is a weighted Fermat-Torricelli point and no other point is a weighted Fermat-Torricelli point.\\
		Next assume that $\alpha_3 =  \alpha_1+ \alpha_2$  but  $z_2  \notin L[z_1,z_3] $ and $z_1 \notin L[z_2,z_3].$ 
		Then  for any $x \neq z_3$  at least one of the following inequalities are true
		\begin{eqnarray*}
			|z_1-x|+ |z_3-x|> |z_1-z_3|,\\
			|z_2-x|+ |z_3-x|> |z_2-z_3|.
		\end{eqnarray*}
		Then using similar arguments as in the above case, we can easily show that $x$ is not a weighted Fermat-Torricelli point and $z_3$ is the only weighted Fermat-Torricelli point.
	\end{proof}

	\begin{prop}\label{3-points}
		Let $z_1, z_2, z_3$ be  distinct complex numbers such that $z_i, z_j$ are unimodular for some distinct $i,j \in \{1,2,3\}. $ Let $\alpha_1, \alpha_2, \alpha_3 > 0$ be such that $ \alpha_i  < \alpha_j+ \alpha_k,$ for every distinct $i,j,k \in \{1,2,3\}.$ Assume that  $ \alpha_1 z_1+ \alpha_2 z_2 + \alpha_3 z_3=0.$
		\begin{itemize}
			\item[(i)] If $|z_k|<1,$ where $k \in \{1,2,3\} \setminus \{i,j\}$   then  $\angle( \overrightarrow{z_i}, \overrightarrow{z_j} )= \theta,$ where $\cos \theta < \frac{\alpha_k^2- \alpha_i^2- \alpha_j^2}{2 \alpha_i \alpha_j}.$
			
			\item[(ii)] If  $|z_k|=1,$ where $k \in \{1,2,3\} \setminus \{i,j\}$ then $\angle( \overrightarrow{z_i}, \overrightarrow{z_j} )= \theta, \angle( \overrightarrow{z_i}, \overrightarrow{z_k} )= \phi$ where $\cos \theta = \frac{\alpha_k^2- \alpha_i^2- \alpha_j^2}{2 \alpha_i \alpha_j},$ $ \cos \phi = \frac{\alpha_j^2- \alpha_i^2- \alpha_k^2}{2 \alpha_i \alpha_k}.$
		\end{itemize}
	\end{prop}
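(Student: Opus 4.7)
The plan is to exploit the linear constraint $\alpha_1 z_1 + \alpha_2 z_2 + \alpha_3 z_3 = 0$ by isolating $\alpha_k z_k$ and computing its modulus squared; the conclusions will then reduce to a direct law-of-cosines computation. I would first rewrite the constraint as $\alpha_k z_k = -(\alpha_i z_i + \alpha_j z_j)$ and square moduli to obtain
\[
\alpha_k^2\,|z_k|^2 \;=\; \alpha_i^2\,|z_i|^2 + \alpha_j^2\,|z_j|^2 + 2\alpha_i\alpha_j\,\operatorname{Re}\!\bigl(z_i\overline{z_j}\bigr).
\]
Since $|z_i| = |z_j| = 1$ by hypothesis, the right-hand side simplifies to $\alpha_i^2 + \alpha_j^2 + 2\alpha_i\alpha_j\,\operatorname{Re}(z_i\overline{z_j})$.

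Next I would translate $\operatorname{Re}(z_i\overline{z_j})$ into $\cos\theta$. Writing $z_i = e^{i\psi_i}$ and $z_j = e^{i\psi_j}$, the angle convention recalled just before the proposition forces $\theta \equiv \psi_j - \psi_i \pmod{2\pi}$, and since $z_i\overline{z_j} = e^{i(\psi_i - \psi_j)}$, the parity of cosine gives $\operatorname{Re}(z_i\overline{z_j}) = \cos(\psi_i-\psi_j) = \cos\theta$. Substituting back yields the master identity
\[
\alpha_k^2\,|z_k|^2 \;=\; \alpha_i^2 + \alpha_j^2 + 2\alpha_i\alpha_j\cos\theta.
\]

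For part (ii), $|z_k|=1$ turns the identity into $\alpha_k^2 = \alpha_i^2 + \alpha_j^2 + 2\alpha_i\alpha_j\cos\theta$, which rearranges to the claimed expression for $\cos\theta$. Because all three points are now unimodular, the identical computation applied instead to $\alpha_j z_j = -(\alpha_i z_i + \alpha_k z_k)$, together with the angle $\phi = \angle(\overrightarrow{z_i},\overrightarrow{z_k})$, produces the formula for $\cos\phi$. For part (i), $|z_k|<1$ implies $\alpha_k^2\,|z_k|^2 < \alpha_k^2$, so the master identity becomes the strict inequality $\alpha_i^2 + \alpha_j^2 + 2\alpha_i\alpha_j\cos\theta < \alpha_k^2$, which rearranges to the stated bound on $\cos\theta$.

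The only subtle point, and therefore the nearest thing to an obstacle, is verifying that the directed-angle convention does not introduce a spurious sign when identifying $\operatorname{Re}(z_i\overline{z_j})$ with $\cos\theta$; this is resolved by the evenness of cosine and so is not a genuine difficulty. The triangle-like hypotheses $\alpha_i < \alpha_j + \alpha_k$ play no role in the argument itself, but they ensure the right-hand sides of the cosine formulas lie in $(-1,1)$ so that the angles $\theta,\phi$ are well-defined and nontrivial. I would not need Proposition~\ref{first} or Lemma~\ref{lemma1} here — the proposition is a purely geometric consequence of the linear relation and the unimodular assumption.
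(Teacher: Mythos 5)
Your proposal is correct and follows essentially the same route as the paper: both isolate $\alpha_k z_k$ from the linear relation, expand the squared modulus using $|z_i|=|z_j|=1$ to get $\alpha_k^2|z_k|^2=\alpha_i^2+\alpha_j^2+2\alpha_i\alpha_j\cos\theta$, and then read off the strict inequality or equality according to whether $|z_k|<1$ or $|z_k|=1$. Your added remarks on the sign under the directed-angle convention and on the role of the triangle inequalities on the weights are correct but not needed beyond what the paper records.
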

	
	\begin{proof}
		
		Let $\angle( \overrightarrow{z_i}, \overrightarrow{z_j} )= \theta.$ This implies $z_j= z_i e^{i \theta},$ where $0 \leq \theta < 2 \pi.$ 
		
		(i)  Clearly $|z_k|< 1 \implies |-\frac{\alpha_i}{\alpha_k} z_i- \frac{\alpha_j}{\alpha_k} z_j| < 1 \implies \alpha_i^2 +  \alpha_j^2+ 2 \alpha_i \alpha_j \cos \theta<  \alpha_k^2 \implies \cos \theta< \frac{\alpha_k^2- \alpha_i^2- \alpha_j^2}{2 \alpha_i \alpha_j}. $ This proves (i).
		
		(ii)	Proceeding as (i),   $|z_k|=1 \implies    \cos \theta=  \frac{\alpha_k^2- \alpha_i^2- \alpha_j^2}{2 \alpha_i \alpha_j}. $ Suppose that $\angle( \overrightarrow{z_i}, \overrightarrow{z_k} )= \phi.$ Then  using $|z_j|=1 $ and following similar  arguments as before, we get that $  \cos \phi= \frac{\alpha_j^2- \alpha_i^2- \alpha_k^2}{2 \alpha_i \alpha_k} .$ This completes the proof.
	\end{proof}
	
	\begin{remark}
		Whenever $ \alpha_1= \alpha_2= \alpha_3=1 $ and $z_1, z_2, z_3$ are unimodular distinct complex number such that $z_1+ z_2+ z_3=0,$ then $ z_1, z_2, z_3$ form the vertices of an equilateral triangle in the complex plane.
	\end{remark}

	As mentioned previously, Birkhoff-James orthogonality techniques are our main tool for analyzing the Fermat-Torricelli problem. In particular, it follows from Proposition \ref{Fermat} that to solve the weighted Fermat-Torricelli problem in the complex plane for three points, we require an explicit description of Birkhoff-James orthogonality in $ \ell_1^3. $ This is accomplished in the following result.
	
	\begin{theorem}\label{Fermat-Torricelli set 3}
		Let $ \mathbb{X}= \ell_1^3 $ and let $\alpha_1 , \alpha_2, \alpha_3 > 0 $ be such that $ \alpha_i  < \alpha_j+ \alpha_k,$ for every distinct $i,j,k \in \{1,2,3\}.$  If   $( c_1, c_2, c_3) \perp_B ( \alpha_1, \alpha_2, \alpha_3),$ then $( c_1, c_2, c_3) $ is  one of the following types:\\
		\begin{itemize}
			\item[Type I:] 
			
			$	(a)  \lambda( \mu, 0, 0),$\\
			$	(b) \lambda( 0, \mu,  0),$ \\
			$	(c) \lambda( 0, 0, \mu),$\\
			where $ \mu$ is unimodular and $\lambda \geq 0.$\\
			
			\item[ Type II:]
			$	(a)  \lambda( t\mu, (1-t) \sigma, 0) $ such that $  \angle (\overrightarrow{\sigma}, \overrightarrow{\mu}) = \theta$ and $cos \theta \leq \frac{\alpha_3^2 -\alpha_1^2 - \alpha_2^2}{2\alpha_1\alpha_2},$  \\
			$	(b) \lambda( 0, t\mu, (1-t)\sigma)$ such that $\angle (\overrightarrow{\sigma}, \overrightarrow{\mu}) = \theta$ and $cos \theta \leq \frac{\alpha_1^2 -\alpha_2^2-\alpha_3^2}{2\alpha_2\alpha_3},$   \\
			$	(c) \lambda( t\mu, 0, (1-t)\sigma)$ such that $ \angle (\overrightarrow{\sigma}, \overrightarrow{\mu}) = \theta$ and $cos \theta \leq \frac{\alpha_2^2 -\alpha_1^2 - \alpha_3^2}{2\alpha_1\alpha_3},$ \\
			where $\mu, \sigma$ are unimodular and $t \in (0,1)$, $\lambda \geq 0.$ \\
			
			\item[Type III:]
			$ \lambda (t_1 \mu, t_2 \sigma, t_3 \gamma ): \lambda \geq 0, t_1, t_2, t_3 \in (0, 1), t_1+ t_2 +t_3=1, $ where  $\angle (\overrightarrow{\sigma}, \overrightarrow{\mu}) = \theta,$
			and  $ \angle (\overrightarrow{\gamma}, \overrightarrow{\mu}) = \phi,$ where $\cos \theta= \frac{\alpha_3^2 -\alpha_1^2 - \alpha_2^2}{2\alpha_1\alpha_2} $ and $\cos \phi= \frac{\alpha_2^2- \alpha_3^2- \alpha_1^2}{2 \alpha_1 \alpha_3}.$\\
		\end{itemize}
	\end{theorem}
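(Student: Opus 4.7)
The plan is to translate the Birkhoff--James orthogonality relation into a concrete system of equations via Theorem~\ref{James}, then use Lemma~\ref{lemma1} together with a case analysis on the support of $(c_1,c_2,c_3)$ to recover the three prescribed types. Concretely, set $y=(\alpha_1,\alpha_2,\alpha_3)$ and $c=(c_1,c_2,c_3)$. By Theorem~\ref{James}, the relation $c\perp_B y$ in $\ell_1^3$ is equivalent to the existence of a supporting functional $f\in J(c)$ with $f(y)=0$. Since $(\ell_1^3)^*=\ell_\infty^3$, the functional $f$ is represented by some $(x_1,x_2,x_3)$ with $\max_i|x_i|=1$, and the two conditions read $x_1c_1+x_2c_2+x_3c_3=\|c\|_1$ and $\alpha_1x_1+\alpha_2x_2+\alpha_3x_3=0$. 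If $c=0$ the conclusion holds trivially with $\lambda=0$; otherwise I would set $\lambda=\|c\|_1>0$ and normalize so that $\|c\|_1=1$.

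The next step is the crucial bridge between $(x_i)$ and $(c_i)$. Equality in
\[
1=\Big|\sum_{i=1}^{3} x_ic_i\Big|\leq \sum_{i=1}^{3} |x_i||c_i|\leq \sum_{i=1}^{3}|c_i|=1
\]
forces, for every $i$ with $c_i\neq 0$, both $|x_i|=1$ and $x_ic_i=|c_i|$; equivalently $x_i=\overline{c_i/|c_i|}$. In particular, writing $\mu=c_1/|c_1|$, $\sigma=c_2/|c_2|$, $\gamma=c_3/|c_3|$ whenever the corresponding components are nonzero, these unimodular phases are determined by $c$, while the remaining $x_i$ (for the indices where $c_i=0$) are only constrained by $|x_i|\leq 1$. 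Lemma~\ref{lemma1} is the converse observation used implicitly here and guarantees that the indices where $|x_i|<1$ correspond precisely to indices where $c_i=0$.

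With this in hand, I would split into three cases according to the cardinality of $\{i:c_i\neq 0\}$. If exactly one $c_i$ is nonzero, say $c_1$, we obtain $(c_1,0,0)=\lambda(\mu,0,0)$ with $\mu$ unimodular, which is Type~I$(a)$; here the orthogonality condition $\alpha_1\mu+\alpha_2x_2+\alpha_3x_3=0$ can be satisfied with $|x_2|,|x_3|\leq 1$ precisely because $\alpha_1\leq\alpha_2+\alpha_3$ is part of our hypothesis. If exactly two $c_i$ are nonzero, say $c_1,c_2$, then $x_1=\bar\mu$, $x_2=\bar\sigma$ are fixed and $\alpha_3 x_3=-\alpha_1\bar\mu-\alpha_2\bar\sigma$ with $|x_3|\leq 1$; squaring yields $\alpha_1^2+\alpha_2^2+2\alpha_1\alpha_2\cos\theta\leq\alpha_3^2$, where $\theta=\angle(\overrightarrow\sigma,\overrightarrow\mu)$ because $\mu\bar\sigma=e^{i\theta}$. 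Writing $c_1=\lambda t\mu$, $c_2=\lambda(1-t)\sigma$ with $t=|c_1|/\lambda$ gives Type~II$(a)$, and the other sub-cases are symmetric. If all three $c_i$ are nonzero, then $x_1=\bar\mu$, $x_2=\bar\sigma$, $x_3=\bar\gamma$ are all forced, so $\alpha_1\mu+\alpha_2\sigma+\alpha_3\gamma=0$; squaring two different isolations of this identity gives the two angle formulas for $\theta=\angle(\overrightarrow\sigma,\overrightarrow\mu)$ and $\phi=\angle(\overrightarrow\gamma,\overrightarrow\mu)$ of Type~III.

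The only mildly delicate step is the identification $x_i=\overline{c_i/|c_i|}$ from the support equality, which is really a strict-convexity-at-boundary style lemma and is precisely the content of Lemma~\ref{lemma1}; once that is invoked, the rest is a bookkeeping exercise on how many phases are forced versus how many remain free, and a straightforward law-of-cosines computation to convert $|\alpha_i\mu+\alpha_j\sigma|^2\leq\alpha_k^2$ (or $=\alpha_k^2$) into the stated inequality (or equality) on the cosine. I would note that the triangle-type hypothesis $\alpha_i<\alpha_j+\alpha_k$ ensures that each of the three listed types is actually realizable (so none of the cases is vacuous), which matches the strict inequalities compared with Proposition~\ref{first}.
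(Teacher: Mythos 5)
Your proof is correct and follows essentially the same route as the paper's: James' theorem together with the identification $(\ell_1^3)^*=\ell_\infty^3$, the equality case of the triangle inequality (Lemma \ref{lemma1} plus strict convexity of $\mathbb{C}$) to pin down $|x_i|=1$ and $x_i=\overline{c_i/|c_i|}$ on the support of $c$, and a law-of-cosines computation for the angle conditions; the only difference is organizational, in that you split cases by the support of $c$ while the paper splits by $N(x)$ and then sub-cases on which coordinates of $u$ vanish, which amounts to the same enumeration. One minor caution: Lemma \ref{lemma1} yields only the implication $|x_i|<1\Rightarrow c_i=0$, not the ``precisely'' (i.e.\ the converse) you assert in passing, but your argument only ever uses the correct direction, so nothing is affected.
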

	
	\begin{proof}
		Let $ \alpha = ( \alpha_1, \alpha_2, \alpha_3) \in \ell_1^3.$ Our aim is to find all those $c = (c_1,c_2,c_3) $ in $\ell_1^3$ such that $ c \perp_B \alpha.$ 
		Since  Birkhoff-James orthogonality is homogeneous, it suffices to find those $c$'s  for which $ c \in S_{\mathbb{X}}.$   Theorem \ref{James} asserts that $ c \perp_B \alpha$ if and only if there exists a supporting functional $f$ at $c$  so that $ f(c)= \| c\|$ and $ f(\alpha) = 0.$ We first consider the set
		\[ A:= \{f \in (\ell_1^3)^* : f( \alpha) = 0 \} \] 
		which is same as 
		\[	A:=\{ x = (x_1, x_2, x_3) \in S_{\ell_\infty^3}:  f(\alpha)= x_1\alpha_1 + x_2 \alpha_2 + x_3 \alpha_3=0\}.	\]
		Now for any $x=(x_1, x_2, x_3)\in A,$ we define the set
		\[
		B(x):= \{ u = (u_1, u_2, u_3) \in S_{\mathbb{X}}: x_1 u_1+ x_2u_2+ x_3 u_3=1\}.\]
		Observe that the collection of  elements in $S_{\mathbb{X}}$ that are orthogonal to $\alpha$ is simply the collection of all those elements in $B(x),$ for every $x \in A.$
		For each $x \in A,$ we define  the set $N(x)$ as the collection of  $i$'s such that $|x_i| < 1$  i.e., 
		\[ N(x) = \{ i \in \{ 1,2,3\} : |x_i| < 1\}.\]
		Then clearly $ |N(x)| =0,1 $ or $2.$ We now consider the following three cases:
		
		Case I:  $ | N(x)| =2.$  Without loss of generality we assume  that $|x_1|=1.$ Clearly, $|x_2|, |x_3| < 1.$ Then for any $u  \in B(x),$ we have $x_1 u_1+ x_2 u_2+ x_3u_3=1$  and so from Lemma \ref{lemma1}, we get $u_2= u_3=0, u_1= \overline{x}_1.$ So, $(u_1, u_2, u_3) =(\overline{x}_1, 0, 0)$. Therefore,  it is easy to observe that $ (c_1, c_2, c_3) $ is of the form $\lambda( \mu, 0, 0),$ where $\mu$ is unimodular and $\lambda \geq 0.$	 Proceeding similarly, we can show that $(c_1, c_2, c_3)$ is of the form $\lambda (0 , \mu, 0)$ or $\lambda(0, 0, \mu)$ when $|x_2| =1$ or $|x_3|=1,$  respectively. 
		
		Case II: $ | N(x)| =1.$ Without loss of generality we assume  that $|x_1|=|x_2|=1.$ Clearly, $|x_3| < 1.$ From Proposition \ref{3-points}, we obtain that $ \angle (\overrightarrow{x_1}, \overrightarrow{x_2}) = \theta,$ where $cos \theta < \frac{\alpha_3^2- \alpha_1^2- \alpha_2^2}{2 \alpha_1 \alpha_2}.$ Now for any $u \in B(x), $ $x_1 u_1+ x_2 u_2+ x_3 u_3=1$  and so from Lemma \ref{lemma1}, we get $u_3=0.$  We observe that 
		$	| x_1 u_1| + | x_2 u_2| = |u_1| + |u_2|=1 = |x_1u_1+ x_2 u_2| .$ 
		If one of $u_1$ or $u_2$ is zero, we get first type of points. If both $u_1$ and $u_2$ are non-zero, then using the strict convexity of $\mathbb{C}$ we get $x_2u_2= p x_1 u_1,$ for some $p > 0.$ Therefore,
		\[
		u_1= \frac{1}{1+p} \overline{x}_1, ~  u_2= \frac{p}{1+p} \overline{x}_2.
		\] 
		In particular, 
		\[
		(u_1, u_2, u_3) = ( t \mu, (1-t) \sigma, 0), ~~~ t \in (0, 1),
		\]
		where $ \mu= \overline{x_1}, \sigma= \overline{x_2},$ which  are unimodular. Since    $ \angle (\overrightarrow{x_1}, \overrightarrow{x_2}) = \theta \implies \angle (\overrightarrow{\overline{x_1}}, \overrightarrow{\overline{x_2}})= 2\pi-\theta,$ it implies that $\angle (\overrightarrow{\sigma}, \overrightarrow{\mu})=\theta,$ where $\cos\theta < \frac{\alpha_3^2- \alpha_1^2- \alpha_2^2}{2 \alpha_1 \alpha_2}.$ So, in this case $(c_1, c_2, c_3)$ is one of the Type II points.  The result follows similarly for the other sub-cases.
		
		Case III:  $ |N(x)| = 0 .$  Clearly $|x_1| = | x_2| = |x_3| =1.$ Then $ \alpha_1 x_1 + \alpha_2 x_2 + \alpha_3 x_3 = 0.$
		From Proposition \ref{3-points}, we obtain that $\angle (\overrightarrow{x_1}, \overrightarrow{x_2}) = \theta$ and $  \angle (\overrightarrow{x_1}, \overrightarrow{x_3}) = \phi,$ where $cos \theta = \frac{\alpha_3^2- \alpha_1^2- \alpha_2^2}{2 \alpha_1 \alpha_2}$ and $cos \phi = \frac{\alpha_2^2- \alpha_1^2- \alpha_3^2}{2 \alpha_1 \alpha_3}.$ Now for any $u \in B(x),$ $x_1 u_1+ x_2 u_2+ x_3 u_3=1.$
		If any two of the $u_i$'s are zero then we obtain the first type of points. Let us now observe the case where exactly one of the $u_i,$ say $ u_3 $ is zero. Then  
		\[
		| x_1 u_1| + | x_2 u_2|  = | u_1| + |u_2| = 1 = | x_1 u_1 + x_2 u_2 |
		\] 
		Using the strict convexity of $\mathbb{C},$ similar to the previous case, we obtain $x_2u_2= p x_1 u_1,$ for some $p>0.$ Therefore,
		\[
		u_1= \frac{1}{1+p} \overline{x}_1, ~  u_2= \frac{p}{1+p} \overline{x}_2.
		\] 
		In particular, 
		\[
		(c_1, c_2, c_3)= \lambda (u_1, u_2, u_3) = \lambda( t \mu, (1-t) \sigma, 0), ~~~ t \in (0, 1),
		\]
		where $ \mu= \overline{x_1}, \sigma= \overline{x_2},$ which  are unimodular and $\lambda> 0$. Since $\angle (\overrightarrow{x_1}, \overrightarrow{x_2}) = \theta,$  where $cos \theta = \frac{\alpha_3^2- \alpha_1^2- \alpha_2^2}{2 \alpha_1 \alpha_2},$ we obtain  $  \angle (\overrightarrow{\sigma}, \overrightarrow{\mu}) = \theta.$ Therefore, $(c_1, c_2, c_3)$ is one of the Type II points and similarly by choosing the other $u_i$ as $0,$ we get the other Type II points.

		Assume that $ u_i \neq 0$ for each $i =1,2,3.$ 
		Observe that
		\[
		| x_1 u_1| + | x_2 u_2| + | x_3 u_3| = | u_1| + |u_2| + |u_3|= 1 = | x_1 u_1 + x_2 u_2 + x_3 u_3|.\]
		Using the strict convexity of $\mathbb{C},$  we get $x_2 u_2 = p x_1 u_1, ~ x_3 u_3 = q x_1 u_1,$ for some $p,q > 0.$ Therefore, 
		\[
		u_1= \frac{1}{1+p+q} \overline{x}_1, ~ u_2 = \frac{p}{1+p+q} \overline{x}_2, ~ u_3= \frac{q}{1+p+q} \overline{x}_3.
		\]
		In particular, 
		\[
		(u_1, u_2, u_3)= (t_1 \mu, t_2 \sigma, t_3 \gamma), ~ t_1, t_2, t_3 \in (0,1),~  t_1+ t_2+ t_3=1, 
		\] 
		where $ \mu= \overline{x_1}, \sigma= \overline{x_2}, \gamma= \overline{x_3},$ which are unimodular. 
		Since,  $ \angle (\overrightarrow{x_1}, \overrightarrow{x_2}) = \theta, $ $  \angle (\overrightarrow{x_1}, \overrightarrow{x_3}) = \phi ,$ so we get
		$ \angle (\overrightarrow{\overline{x_1}}, \overrightarrow{\overline{x_2}})= 2\pi-\theta,~ \angle (\overrightarrow{\overline{x_1}}, \overrightarrow{\overline{x_3}})= 2\pi-\phi.$ Thus we get,  $\angle (\overrightarrow{\sigma}, \overrightarrow{\mu})=\theta,~ \angle (\overrightarrow{\gamma}, \overrightarrow{\mu})=\phi,$ where $\cos\theta = \frac{\alpha_3^2- \alpha_1^2- \alpha_2^2}{2 \alpha_1 \alpha_2}$ and $\cos\phi =  \frac{\alpha_2^2- \alpha_1^2- \alpha_3^2}{2 \alpha_1 \alpha_3}.$
		Thus in this case $(c_1,c_2,c_3)$ is one of the Type III points. 
		This completes the theorem.  \\
	\end{proof}
	
	The above result allows us to solve the weighted Fermat-Torricelli problem in the complex plane for three points.
	
	\begin{theorem}\label{second}
		Let $z_1, z_2, z_3$ be  distinct complex numbers and let $\alpha_1, \alpha_2, \alpha_3 > 0.$ Suppose that $  \alpha_i  < \alpha_j+ \alpha_k,$ for every distinct $i,j,k \in \{1,2,3\}.$ 
		Let $w$ be the weighted Fermat-Torricelli point of  $z_1, z_2, z_3,$ where the weights are $\alpha_1, \alpha_2, \alpha_3,$ respectively. Then exactly one of following conditions is true: 
		
		\begin{itemize}
			\item[(i)] The weighted Fermat-Toricelli point $w$ coincides with one of the points, say,  $z_i$ and  $\angle (\overrightarrow{z_iz_j}, \overrightarrow{z_i z_k})= \theta,$ where $\cos \theta \leq \frac{\alpha_i^2- \alpha_j^2- \alpha_k^2}{2 \alpha_j \alpha_k}.$ 
			\item[(ii)]  $\angle (\overrightarrow{w z_j}, \overrightarrow{w z_i}) =\theta$ and $ \angle (\overrightarrow{ w z_k}, \overrightarrow{w z_i}) = \phi,$  where $\cos \theta = \frac{\alpha_k^2 -\alpha_i^2 - \alpha_j^2}{2\alpha_i\alpha_j} $ and $\cos \phi= \frac{\alpha_j^2- \alpha_i^2- \alpha_k^2}{2 \alpha_i \alpha_k}.$
		\end{itemize}

	\end{theorem}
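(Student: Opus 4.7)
The plan is to recast the statement as a Birkhoff--James orthogonality problem in $\ell_1^3$ and then read off the geometric content from the classification already established in Theorem~\ref{Fermat-Torricelli set 3}. Taking $\mathbb{X} = \mathbb{C}$ in Proposition~\ref{Fermat}, the point $w$ is the weighted Fermat--Torricelli point if and only if
\[
(c_1, c_2, c_3) := \bigl( \alpha_1(z_1-w),\, \alpha_2(z_2-w),\, \alpha_3(z_3-w) \bigr) \;\perp_B\; \alpha \quad \text{in } \ell_1^3,
\]
where $\alpha = (\alpha_1,\alpha_2,\alpha_3)$; because the subspace $\mathbb{Y}$ is the one--dimensional complex span of $\alpha$, orthogonality to $\mathbb{Y}$ is the same as orthogonality to $\alpha$.

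By Theorem~\ref{Fermat-Torricelli set 3}, the vector $(c_1,c_2,c_3)$ must be of Type~I, Type~II, or Type~III, and I would dispose of them in turn. Type~I is ruled out immediately: in each sub-case two of the coordinates vanish, forcing two of the distinct $z_i$'s to coincide with $w$, a contradiction. For Type~II, exactly one coordinate $c_i$ equals zero, so $w = z_i$, and the accompanying condition $\angle(\overrightarrow{\sigma},\overrightarrow{\mu}) = \theta$ with the cosine bound translates, after relabelling according to which $c_i$ vanishes, to $\angle(\overrightarrow{z_i z_j},\overrightarrow{z_i z_k}) = \theta$ with $\cos\theta \leq (\alpha_i^2 - \alpha_j^2 - \alpha_k^2)/(2\alpha_j\alpha_k)$, which is case (i). For Type~III all three $c_i$'s are nonzero, so $w \notin \{z_1,z_2,z_3\}$, and the angles between the unimodular directions $\mu, \sigma, \gamma$ at the origin are exactly the angles at $w$ between the segments $L[w, z_i], L[w, z_j], L[w, z_k]$, yielding case (ii). The two cases are mutually exclusive, since Type~II forces $w \in \{z_1,z_2,z_3\}$ whereas Type~III forbids it.

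\textbf{Main obstacle.} The only genuine bookkeeping lies in carrying the angles from the origin--based convention used in Theorem~\ref{Fermat-Torricelli set 3} to the point--based convention at $w$ demanded in the conclusion. This hinges on noting that $c_j = \alpha_j(z_j - w)$ has the same direction as $z_j - w$ (because $\alpha_j > 0$), and that the positive parameters $\lambda$ and $t_\ell$ appearing in the Type~II/III parametrizations likewise preserve direction. Combined with the defining relation $(u - w) = \lambda (u - v)e^{i\theta}$ for $\angle(\overrightarrow{uv}, \overrightarrow{uw}) = \theta$, this makes the angle identifications routine, and the prescribed cosine formulas in (i)--(ii) are then exactly those supplied by Theorem~\ref{Fermat-Torricelli set 3}.
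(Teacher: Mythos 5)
Your proposal is correct and follows essentially the same route as the paper: reduce to Birkhoff--James orthogonality in $\ell_1^3$ via Proposition~\ref{Fermat}, invoke the classification of Theorem~\ref{Fermat-Torricelli set 3}, rule out Type~I by distinctness, and match Type~II to case (i) and Type~III to case (ii) after transferring the angles from the origin to $w$. The direction-preservation bookkeeping you flag is exactly the step the paper carries out explicitly.
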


	\begin{proof}

		Since $w$ is the weighted Fermat-Torricelli point of $z_1, z_2, z_3,$ then following Proposition \ref{Fermat}, we conclude that 
		\[ 
		(\alpha_1(z_1-w), \alpha_2(z_2-w), \alpha_3(z_3-w)) \perp_B (\alpha_1, \alpha_2 ,\alpha_3),
		\]
		in the Banach space $\ell_1^3.$
		Then from Theorem \ref{Fermat-Torricelli set 3}, $ 	(\alpha_1(z_1-w), \alpha_2(z_2-w), \alpha_3(z_3-w))$ is any one of the three types mentioned in Theorem \ref{Fermat-Torricelli set 3}.  We first claim that $ 	(\alpha_1(z_1-w), \alpha_2(z_2-w), \alpha_3(z_3-w))$ can not be of Type I. Indeed, assuming on the contrary that it is so, we have 
		$	(\alpha_1(z_1-w), \alpha_2(z_2-w), \alpha_3(z_3-w))= \lambda(\mu, 0, 0),$ for some unimodular $\mu \in \mathbb{C}$ and $\lambda \geq 0.$  This implies that 
		$ z_2=w=z_3,$ a contradiction to our hypothesis that $z_1,z_2,z_3$ are distinct  complex numbers. Next assume that  $ 	(\alpha_1(z_1-w), \alpha_2(z_2-w), \alpha_3(z_3-w))$ is of the Type II and    assume that $	(\alpha_1(z_1-w), \alpha_2(z_2-w), \alpha_3(z_3-w))= \lambda(t\mu, (1-t) \sigma , 0),$ for some unimodular complex numbers $\mu, \sigma \in \mathbb{C}, t \in (0,1) $ and $ \lambda \geq 0.$ Then  $ z_3=w$ and it follows from Theorem \ref{Fermat-Torricelli set 3} that,   $   \angle (\overrightarrow{z_3z_2}, \overrightarrow{z_3z_1}) =  \angle (\overrightarrow{z_2-w}, \overrightarrow{z_1-w})= \angle (\overrightarrow{\sigma}, \overrightarrow{\mu})= \theta,$ where $ \cos \theta \leq  \frac{\alpha_3^2- \alpha_1^2- \alpha_2^2}{2 \alpha_1 \alpha_2}.$  Proceeding similarly we can conclude that if $ 	(\alpha_1(z_1-w), \alpha_2(z_2-w), \alpha_3(z_3-w))$ is of the Type II, then the weighted Fermat-Toricelli point $w$ satisfies (i). 
		Finally let  $ 	(\alpha_1(z_1-w), \alpha_2(z_2-w), \alpha_3(z_3-w))$ be of the Type III. Without loss of generality assume that   $	(\alpha_1(z_1-w), \alpha_2(z_2-w), \alpha_3(z_3-w)) = \lambda ( t_1 \mu, t_2 \sigma, t_3 \gamma), $ where $ \mu, \sigma, 
		\gamma $ are unimodular complex numbers and $ t_1, t_2, t_3 \in (0,1 )$ such that $t_1+ t_2 + t_3=1.$ 
		From Theorem \ref{Fermat-Torricelli set 3}, we obtain that $ \angle (\overrightarrow{z_2-w}, \overrightarrow{z_1-w}) = \angle (\overrightarrow{\sigma}, \overrightarrow{\mu}) =\theta$ and $  \angle (\overrightarrow{z_3-w}, \overrightarrow{z_1-w}) = \angle (\overrightarrow{\gamma}, \overrightarrow{\mu})= \phi,$ where  $\cos \theta = \frac{\alpha_3^2 -\alpha_1^2 - \alpha_2^2}{2\alpha_1\alpha_2} $ and $\cos \phi= \frac{\alpha_2^2- \alpha_1^2- \alpha_3^2}{2 \alpha_1 \alpha_3}.$ This implies $\angle (\overrightarrow{w z_2}, \overrightarrow{w z_1}) =\theta$ and $ \angle (\overrightarrow{ w z_3}, \overrightarrow{w z_1}) = \phi.$ 	Therefore, in this case $w$ satisfies (ii). This completes the proof of the theorem.\\
		% Moreover, it is  straightforward to check that if  $\angle (\overrightarrow{z_3 z_1}, \overrightarrow{z_3 z_2}) = \zeta,$ then $\cos \zeta > \frac{\alpha_3^2- \alpha_1^2- \alpha_2^2}{2 \alpha_1 \alpha_2}.$
		%	Moreover such $w$ is unique.
		\\
	\end{proof}

	\begin{remark}
		Proposition \ref{first} and Theorem \ref{second}  together  cover all the possibilities for (positive) weights.
	\end{remark}
	
	Putting $\alpha_i=1$ in the above theorem, we have the following corollary.
	\begin{cor}\label{original}
		Let $z_1, z_2, z_3$ be  distinct complex numbers and 	let $w$ be the  Fermat-Torricelli point of  $z_1, z_2, z_3.$ Then exactly one of followings are true: 
		
		\begin{itemize}
			\item[(i)] The Fermat-Toricelli point $w$ coincides with one of the points, say, $z_i$ and $\angle (\overrightarrow{z_iz_j}, \overrightarrow{z_i z_k}) \geq \frac{2 \pi}{3}.$ 
			
			\item[(ii)] 
			$\angle (\overrightarrow{w z_j}, \overrightarrow{w z_i}) =\frac{2 \pi}{3}$ and $ \angle (\overrightarrow{ w z_k}, \overrightarrow{w z_i}) = \frac{4 \pi}{3},$ for some distinct $i,j,k \in \{1,2,3\}.$ \\
		\end{itemize}
	\end{cor}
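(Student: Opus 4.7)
The plan is to deduce Corollary \ref{original} as a direct specialization of Theorem \ref{second} to the unweighted case $\alpha_1 = \alpha_2 = \alpha_3 = 1$. I would first verify that the running hypothesis of that theorem is satisfied: with unit weights the strict triangle inequality $\alpha_i < \alpha_j + \alpha_k$ collapses to $1 < 2$, so we are never in the degenerate regime handled by Proposition \ref{first}, and the dichotomy supplied by Theorem \ref{second} genuinely applies to the classical Fermat-Torricelli point $w$.

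Next I would substitute $\alpha_i = 1$ directly into the cosine expressions produced by Theorem \ref{second}. In case (i), the bound
\[
\cos\theta \;\leq\; \frac{\alpha_i^2 - \alpha_j^2 - \alpha_k^2}{2\alpha_j\alpha_k} \;=\; -\tfrac{1}{2}
\]
immediately forces $\theta \geq 2\pi/3$, which is precisely condition (i) of the corollary. In case (ii), the same substitution yields $\cos\theta = \cos\phi = -\tfrac{1}{2}$, so each of $\theta$ and $\phi$ lies in the two-element set $\{2\pi/3,\, 4\pi/3\}$.

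The only remaining task is the bookkeeping step of showing that $\theta \neq \phi$, so that one equals $2\pi/3$ and the other equals $4\pi/3$ as the corollary prescribes. For this I would trace case (ii) of Theorem \ref{second} back to its source, Type III of Theorem \ref{Fermat-Torricelli set 3}: the angles there are controlled by three distinct unimodular numbers $x_1, x_2, x_3$ satisfying $\alpha_1 x_1 + \alpha_2 x_2 + \alpha_3 x_3 = 0$. Under equal weights this reduces to $x_1 + x_2 + x_3 = 0$, so $x_1, x_2, x_3$ are the vertices of an equilateral triangle inscribed in the unit circle, and the two oriented angles from two distinct vertices to the third are necessarily $2\pi/3$ and $4\pi/3$ (in some order). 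Since the three weights are symmetric, one may freely relabel $j$ and $k$ to match the exact prescription $\theta = 2\pi/3,\; \phi = 4\pi/3$ stated in (ii).

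I do not expect any genuine obstacle here: all the analytical work, namely the explicit description of Birkhoff-James orthogonality in $\ell_1^3$ (Theorem \ref{Fermat-Torricelli set 3}) and its translation into the weighted Fermat-Torricelli language (Theorem \ref{second}), has already been carried out, and the present corollary is a pure specialization together with a short orientation argument in the equilateral case.
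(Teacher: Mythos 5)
Your proposal is correct and matches the paper's approach exactly: the paper derives the corollary simply by "putting $\alpha_i=1$" in Theorem \ref{second}, which is precisely your specialization giving $\cos\theta\leq-\tfrac{1}{2}$ in case (i) and $\cos\theta=\cos\phi=-\tfrac{1}{2}$ in case (ii). Your additional orientation argument (tracing back to $x_1+x_2+x_3=0$ for unimodular $x_i$ to pin down $\theta=2\pi/3$, $\phi=4\pi/3$ after relabeling) is a detail the paper leaves implicit, and it is sound.
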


	\section*{The Fermat-Torricelli Problem For 4-points}

	The Fermat-Torricelli problem for four points $z_1,z_2,z_3,z_4$ in the complex plane asks to find a point $w$ such that  for all complex numbers $x,$ 
	\[ |z_1-w| + |z_2 - w|\ + |z_3 - w| + |z_4-w| \leq |z_1-x| + |z_2-x| + |z_3-x| + |z_4-x| .\]
	
	The points $z_1,z_2,z_3,z_4$ are considered to be distinct and so the quadrilateral $Q$ formed by them is non-degenerate. We recall that a quadrilateral  with four vertices $z_1,z_2,z_3,z_4$ is called a convex quadrilateral if $z_i \notin co(\{z_j, z_k, z_l\}),$ for any distinct $i, j, k, l \in \{1, 2, 3, 4\}.$	In a convex quadrilateral, each interior angle  is lesser than $\pi.$  On the other hand, in a non-convex quadrilateral at least one angle is a reflex angle, i.e., an angle greater than or equal to  $\pi.$
	We require the following lemma for our purpose.

	\begin{lemma}\label{4-points}
		Let $z_1, z_2$ and  $z_3$ be 
		unimodular distinct complex numbers. Then 
		\begin{itemize}
			\item[(i)]  The three points  $z_i, z_j, 0\,(i \neq j) $ are collinear if and only if  $|z_1 + z_2 + z_3| =1.$ 
			
			\item[(ii)] The four points $ 0, z_1, z_2, z_3$ form  the vertices of a non-convex quadrilateral  if and only if $ |z_1+ z_2+ z_3| < 1.$ Also, in such a case,  $0 \in co(\{z_1, z_2, z_3\}). $  
		\end{itemize}
	\end{lemma}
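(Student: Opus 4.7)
The plan is to prove (i) by direct algebraic computation with trigonometric identities and then deduce (ii) from (i) via a cyclic arc-length parametrization of unimodular triples on the unit circle.

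For (i), I would first dispose of the easy direction: if $z_i, z_j, 0$ are collinear with $z_i, z_j$ distinct and unimodular, then $z_j = -z_i$, so $z_1+z_2+z_3$ collapses to the remaining $z_k$, which has modulus $1$. For the converse, I would normalize by dividing through by $z_1$ (the condition $|z_1+z_2+z_3|=1$ and all the collinearity conditions are invariant under this multiplication). Writing $z_2/z_1 = e^{i\alpha}$ and $z_3/z_1 = e^{i\beta}$, expanding $|1+e^{i\alpha}+e^{i\beta}|^2=1$ yields the identity $\cos\alpha+\cos\beta+\cos(\alpha-\beta)=-1$. Applying product-to-sum and then sum-to-product, the LHS plus $1$ factors cleanly as $4\cos\frac{\alpha-\beta}{2}\cos\frac{\alpha}{2}\cos\frac{\beta}{2}$. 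Setting this to zero forces one of $\alpha-\beta\equiv\pi$, $\alpha\equiv\pi$, or $\beta\equiv\pi$ (mod $2\pi$), which translate to $z_2=-z_3$, $z_2=-z_1$, or $z_3=-z_1$ respectively — in each case giving the required collinearity of some pair with $0$.

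For (ii), since the $z_i$ are distinct and unimodular, I would order them cyclically and write $z_j=e^{i\theta_j}$ with $\theta_1<\theta_2<\theta_3<\theta_1+2\pi$. Let $a, b, c$ denote the three arc lengths between consecutive points, so $a+b+c=2\pi$ with $a,b,c\in(0,2\pi)$. Among the four points $0,z_1,z_2,z_3$, only $0$ can possibly lie in the convex hull of the other three (the $z_i$ are extreme points of the closed unit disk), so by the paper's definition the quadrilateral is non-convex precisely when $0$ lies in the open triangle $\triangle z_1z_2z_3$, which by a standard convexity argument is equivalent to $a,b,c<\pi$. A direct expansion gives $|z_1+z_2+z_3|^2 = 3+2(\cos a+\cos b+\cos c)$. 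Substituting $a'=\pi-a$, $b'=\pi-b$, $c'=\pi-c$ (so $a'+b'+c'=\pi$) together with the classical identity $\cos a'+\cos b'+\cos c'=1+4\sin(a'/2)\sin(b'/2)\sin(c'/2)$ — which I would verify holds for any reals summing to $\pi$, independent of sign — yields
\[
|z_1+z_2+z_3|^2 = 1-8\sin(a'/2)\sin(b'/2)\sin(c'/2).
\]
Since $a+b+c=2\pi$ with each positive, at most one of the arcs can be $\geq\pi$, so at most one of $\sin(a'/2), \sin(b'/2), \sin(c'/2)$ is non-positive; a case analysis shows the triple product is strictly positive exactly when all arcs are strictly less than $\pi$, is zero exactly on the boundary case of (i), and is strictly negative otherwise. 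This gives $|z_1+z_2+z_3|<1 \iff a,b,c<\pi \iff 0\in\mathrm{int}(\mathrm{co}\{z_1,z_2,z_3\})$, establishing both the equivalence and the concluding statement that $0\in\mathrm{co}\{z_1,z_2,z_3\}$ in the non-convex case.

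The main obstacle will be the careful sign/bookkeeping around the degenerate configurations: confirming that the trigonometric identity I invoke remains valid when one of $a',b',c'$ is negative, and verifying rigorously that non-convexity of the quadrilateral $0,z_1,z_2,z_3$ is equivalent to $0$ lying strictly inside the triangle $\triangle z_1z_2z_3$ (which relies on the fact that unit-modulus points are extreme in the closed unit disk). Once these details are handled the computational core is quite short.
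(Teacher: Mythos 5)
Your proof is correct and, for part (i), essentially identical to the paper's: both reduce $|z_1+z_2+z_3|=1$ to the factorization $1+\cos\alpha+\cos\beta+\cos(\alpha-\beta)=4\cos\frac{\alpha}{2}\cos\frac{\beta}{2}\cos\frac{\alpha-\beta}{2}$ and read off which factor must vanish. For part (ii) the paper runs the same sign analysis on that cosine product and simply \emph{asserts} the equivalence with non-convexity, whereas your arc parametrization, the $\sin(a'/2)\sin(b'/2)\sin(c'/2)$ identity, and the extreme-point argument supply the geometric justification the paper leaves implicit --- a welcome refinement, but not an essentially different route.
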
	
	
	\begin{proof}
		Let us assume that $ \angle (\overrightarrow{z_1}, \overrightarrow{z_2}) =\theta_1$ and $ \angle (\overrightarrow{z_1}, \overrightarrow{z_3}) =\theta_2, $   where $ 0 \leq \theta_1, \theta_2 <   2\pi.$
		Then  $z_2= z_1e^{i\theta_1}$ and $z_3= z_1e^{i\theta_2}.$ Therefore, $|z_1+ z_2+ z_3|= 1 \iff |1+ e^{i\theta_1}+ e^{i\theta_2} | =1 \iff cos \theta_1+ cos \theta_2 + cos(\theta_2- \theta_1) + 1= 0.$ By a straightforward calculation, we can easily observe that   
		\[
		|z_1+ z_2+ z_3|= 1 \iff cos \frac{\theta_1}{2} cos \frac{\theta_2}{2} cos \frac{\theta_2-\theta_1}{2}=0, \]
		which is equivalent to one of $ \theta_1, \theta_2$ and $(\theta_2-\theta_1)$ being equal to $\pi.$ Therefore, the proof of (i) follows directly.
		
		We next prove (ii). Observe that $|z_1+ z_2+ z_3|< 1 $ if and only if  exactly one of $\theta_1, \theta_2$ and $(\theta_2- \theta_1) $ is greater than $\pi.$ Hence   $|z_1+ z_2+ z_3|< 1 $ if and only if the four points $0, z_1, z_2, z_3$ form the vertices of a non-convex quadrilateral. It is clear that in this case  $0 \in co(\{z_1, z_2, z_3\}). $ 
	\end{proof}
	
	\begin{remark}\label{remark:rectangle}
		Let $z_1, z_2, z_3, z_4 $ be unimodular distinct complex numbers such that $z_1+ z_2 +z_3+ z_4=0.$ From the above lemma, it is easy to observe $z_1, z_2, z_3, z_4$ form the vertices of a rectangle.
	\end{remark}
	
	We next obtain the analogous result to Theorem \ref{Fermat-Torricelli set 3}, corresponding to the case $ n = 4. $

	\begin{theorem}\label{Fermat-Torricelli set 4}
		Let $ \mathbb{X}= \ell_1^4.$  If  $( c_1, c_2, c_3, c_4) \perp_B (1, 1, 1, 1),$ then $( c_1, c_2, c_3, c_4) $ is any one of the following types:

		\begin{enumerate}
			\item[Type I:] 
			
			$(a)  \lambda( \mu, 0, 0, 0),$\\
			$(b) \lambda( 0, \mu, 0, 0), $\\
			$(c) \lambda( 0, 0, \mu,  0),$\\
			$(d) \lambda( 0, 0, 0, \mu)$,\\
			where $ \mu$ is unimodular and $\lambda \geq 0.$\\

			\item[Type II:]
			
			$(a)  \lambda( t\mu, (1-t)\sigma, 0, 0),$\\
			$(b) \lambda( t \mu, 0, (1-t)\sigma, 0), $\\
			$(c) \lambda( t\mu , 0, 0,  (1-t)\sigma),$\\
			$(d) \lambda( 0, t\mu , (1-t)\sigma, 0)$,\\
			$(e) \lambda( 0, t\mu , 0, (1-t)\sigma)$,\\
			$(f) \lambda( 0, 0,  t\mu , (1-t)\sigma)$,\\
			where $ \mu, \sigma $ are unimodular, $t \in (0, 1)$ and $\lambda \geq 0.$\\
			
			\item[ Type III:]
			
			$(a)  \lambda( t_1 \mu, t_2 \sigma, t_3 \gamma, 0),$\\
			$(b) \lambda( 0, t_1 \mu, t_2 \sigma, t_3 \gamma),$ \\
			$(c) \lambda( t_1 \mu, 0,t_2 \sigma, t_3 \gamma),$\\
			$(d) \lambda( t_1 \mu, t_2 \sigma, 0, t_3  \gamma),$\\
			where $ \mu, \sigma, \gamma $ are unimodular, $\lambda \geq 0,t_1, t_2, t_3 \in (0, 1)$ such that $t_1+ t_2 + t_3=1$ and $ 0 \in co(\{ \mu, \sigma, \gamma\}).$\\
			
			\item[Type IV:]
			$\lambda( t_1 \mu, t_2 \sigma,  t_3  \gamma, t_4 \delta ),$  where $ \mu, \sigma, \gamma, \delta $ are unimodular, $\lambda \geq 0,$ and  $t_1, t_2, t_3, t_4 \in (0, 1)$ such that $t_1+ t_2 + t_3+ t_4=1.$ Moreover, $ \mu, \sigma, \gamma, \delta$ form the vertices of a rectangle in the complex plane.
			
		\end{enumerate}
	\end{theorem}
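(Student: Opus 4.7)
The plan is to adapt the case-analysis strategy from the proof of Theorem \ref{Fermat-Torricelli set 3} to the four-point setting. By homogeneity of Birkhoff-James orthogonality, it suffices to characterize those $(c_1,c_2,c_3,c_4) \in S_{\ell_1^4}$ with $(c_1,c_2,c_3,c_4) \perp_B (1,1,1,1).$ Applying Theorem \ref{James}, this is equivalent to the existence of a supporting functional at $c,$ represented by some $x = (x_1,x_2,x_3,x_4) \in S_{\ell_\infty^4},$ satisfying $x_1 + x_2 + x_3 + x_4 = 0$ and $\sum_{i=1}^4 x_i c_i = 1.$

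First, I would invoke Lemma \ref{lemma1} to conclude that $c_i = 0$ whenever $|x_i| < 1,$ and then split the analysis according to $|N(x)| \in \{0,1,2,3\}$ where $N(x) = \{i : |x_i| < 1\}.$ In each sub-case, the nonzero coordinates of $c$ are supported on indices $i$ with $|x_i| = 1;$ using the equality condition in the triangle inequality (i.e., strict convexity of $\mathbb{C}$) applied to $\sum_{c_i \neq 0} x_i c_i = 1 = \sum_{c_i \neq 0} |c_i|,$ the nonzero coordinates must take the form $c_i = \lambda t_i \overline{x_i}$ with $t_i > 0$ and $\sum t_i = 1.$ This produces the general vector form in each case: Type I when exactly one coordinate of $c$ is nonzero, Type II when two, Type III when three, and Type IV when all four are nonzero.

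The main obstacle, and what distinguishes this result from Theorem \ref{Fermat-Torricelli set 3}, is translating the functional constraint $x_1 + x_2 + x_3 + x_4 = 0$ into the geometric restrictions on the unimodular parameters $\mu, \sigma, \gamma, \delta.$ For Type II, no restriction arises: given any unimodular $\mu, \sigma,$ one can always choose $x_3, x_4$ with $|x_3|, |x_4| \leq 1$ and $x_3 + x_4 = -(\overline{\mu} + \overline{\sigma}),$ because $|\overline{\mu} + \overline{\sigma}| \leq 2.$ For Type III, exactly three of the $x_i$'s are unimodular, so the constraint becomes $|\overline{\mu} + \overline{\sigma} + \overline{\gamma}| \leq 1;$ here Lemma \ref{4-points} is crucial, as it characterizes this inequality as $0 \in co(\{\overline{\mu}, \overline{\sigma}, \overline{\gamma}\}),$ which by complex conjugation is equivalent to $0 \in co(\{\mu, \sigma, \gamma\}).$ For Type IV, all four $x_i$'s are unimodular with $x_1 + x_2 + x_3 + x_4 = 0,$ and Remark \ref{remark:rectangle} then forces $\overline{\mu}, \overline{\sigma}, \overline{\gamma}, \overline{\delta}$ (and hence $\mu, \sigma, \gamma, \delta$ themselves, since conjugation is an isometry of $\mathbb{C}$) to form the vertices of a rectangle.

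To close the argument, I would observe that the enumeration of sub-cases by which specific coordinate positions of $c$ are active accounts for the indexed variants (a)--(d) in Types I and III, (a)--(f) in Type II, and the symmetric single form in Type IV. Aside from the two geometric translations via Lemma \ref{4-points} and Remark \ref{remark:rectangle}, the remainder is a direct symbolic extension of the three-point proof, with careful bookkeeping of sign conventions (noting that $\angle(\overrightarrow{x_i},\overrightarrow{x_j}) = \theta$ corresponds to $\angle(\overrightarrow{\overline{x_i}},\overrightarrow{\overline{x_j}}) = 2\pi - \theta,$ which in turn preserves the rectangle condition under conjugation) being the only minor source of delicacy.
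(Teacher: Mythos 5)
Your proposal is correct and follows essentially the same route as the paper: reduce to the unit sphere, invoke Theorem \ref{James} and Lemma \ref{lemma1} to localize the support of $c$ on the indices where $|x_i|=1$, split on $|N(x)|$, use strict convexity of $\mathbb{C}$ to obtain the convex-combination form $c_i=\lambda t_i\overline{x_i}$, and translate the constraint $\sum x_i=0$ into the geometric conditions via Lemma \ref{4-points} (for Type III) and Remark \ref{remark:rectangle} (for Type IV). The only differences are cosmetic (your aside on why Type II carries no angle restriction, and the conjugation bookkeeping), so no further comparison is needed.
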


	\begin{proof}
		Let $ \alpha = ( 1,1,1,1) \in \ell_1^4.$ Our aim is to find all such $c = (c_1,c_2,c_3,c_4) \in \ell_1^4 $ such that $ c \perp_B \alpha.$ 
		As before, it suffices to find those $c$'s  for which $ c \in S_{\mathbb{X}}.$ We further note that $ c \perp_B \alpha$ if and only if there exists a supporting functional $f$ at $c$  so that $ f(c)= \| c\|$ and $ f(\alpha) = 0.$ We first consider the set
		\[ A:= \{f \in (\ell_1^4)^* : f( \alpha) = 0 \} \] 
		which is same as 
		\[	A:=\{ x = (x_1, x_2, x_3,x_4) \in S_{\ell_\infty^4}:  f(\alpha)= x_1 + x_2  + x_3 + x_4=0\}.	\]
		Now for any $x\in A,$ we define the set
		\[
		B(x):= \{ u = (u_1, u_2, u_3,u_4) \in S_{\mathbb{X}}: \sum_{i=1}^4 u_ix_i=1\}.\]
		Observe that the collection of  elements in $S_{\mathbb{X}}$ that are orthogonal to $\alpha$ is the collection of all those elements in $B(x),$ for every $x \in A.$
		For each $x \in A,$ we define  the set $N(x)$ as the collection of  $i$'s such that $|x_i| < 1$  i.e., 
		\[ N(x) = \{ i \in \{ 1,2,3,4\} : |x_i| < 1\}.\]
		Then clearly $ |N(x)| =0,1,2 $ or $3.$ We now consider the following four cases:

		Case I:  $|N(x)|=3.$ Without loss of generality assume that   $|x_1|=1.$ Now  for any $u \in B(x),$  we have $x_1 u_1+ x_2 u_2+ x_3u_3 + x_4u_4=1$ with $ |x_i| <1, $ for $  i=2,3,4$ and so from Lemma \ref{lemma1}, we get $u_2= u_3=u_4=0,$ and  $u_1= \overline{x}_1.$ So, $(u_1, u_2, u_3, u_4)$ is of the form $(\overline{x}_1, 0, 0, 0)= (\mu, 0, 0, 0),$ where $\mu$ is unimodular.  Thus $ (c_1, c_2, c_3,c_4) $ is of the form $\lambda( \mu, 0, 0,0),$ where $\mu$ is unimodular and $\lambda \geq 0.$	
		Similarly we obtain the rest of the Type-I points.
		
		Case II: $|N(x)|=2. $  Without loss of generality, we assume that $|x_1|=|x_2|=1.$ Then  $|x_3| < 1, |x_4| < 1.$  Again using Lemma \ref{lemma1},  we get that $ u_3=0,u_4=0$ if $ u \in B(x).$ Then proceeding similarly as in Theorem \ref{Fermat-Torricelli set 3},  we note that
		\[
		u_1= \frac{1}{1+p} \overline{x}_1, ~  u_2= \frac{p}{1+p} \overline{x}_2,
		\] 
		where $x_2u_2= p x_1 u_1,$ for some $p > 0.$ 	In particular, 
		\[
		(u_1, u_2, u_3, u_4) = ( t \mu, (1-t) \sigma, 0, 0), ~~~ t \in (0, 1),
		\]
		where $ \mu, \sigma$ are unimodular.  Thus $ (c_1, c_2, c_3,c_4) $ is of the form $\lambda(t \mu, (1-t) \sigma, 0,0),$ where $\mu$ is unimodular and $\lambda \geq 0.$	 The result follows similarly for the other sub-cases.

		Case III:  $|N(x)|=1. $  Assume that $|x_1|=|x_2|=|x_3|=1$  and $ |x_4| <1.$  Using  Lemma \ref{lemma1},  we get that $u_4=0$ if $ u \in B(x).$   If any of the points $u_1, u_2, u_3$ are equal to zero, then we can proceed  as in Case I and Case II.  Let us now assume that $u_1, u_2, u_3$ all are non-zero. Now  $| x_1 +  x_2+ x_3|  = |x_4| < 1$  and so from Lemma \ref{4-points},  it follows that the four points $0, x_1, x_2, x_4$ form the vertices of a non-convex quadrilateral where $0 \in co(\{x_1, x_2, x_3\}).$  Now 
		\[
		\bigg|\sum_{i=1}^{3} x_i u_i \bigg| = 1= \sum_{i=1}^{3} \bigg|u_i \bigg| = \sum_{i=1}^{3} \bigg|x_i u_i\bigg|.
		\]
		Proceeding as in Theorem \ref{Fermat-Torricelli set 3} and using the convexity of the complex plane $\mathbb{C},$ it is easy to verify that 
		\[	u_1= \frac{1}{1+p+q} \overline{x}_1, ~  u_2= \frac{p}{1+p+q} \overline{x}_2, ~ u_3= \frac{q}{1+p+q} \overline{x}_3, \]
		for some $p,q > 0.$
		In particular,
		\[	(u_1, u_2, u_3, u_4) = ( t_1 \mu, t_2 \sigma, t_3 \gamma, 0), 	\]	
		where $\mu, \sigma, \gamma$ are unimodular and $ t_1, t_2, t_3 \in (0, 1) $ with $   t_1+ t_2 + t_3=1.$ Since
		$0 \in co(\{x_1, x_2, x_3\})$ it is easy to observe that 
		$0 \in co(\{\mu, \sigma, \gamma\}).$  Thus $ (c_1,c_2,c_3,c_4) $ is of the desired  form of Type III.  The result follows similarly for the other sub-cases.

		Case IV:  $|N(x)|=0. $  Then  $|x_1| = | x_2| = |x_3| =|x_4|= 1.$ From Remark \ref{remark:rectangle}, it follows that the four points $x_1, x_2, x_3, x_4$ form the vertices of a rectangle.  Let $ u \in B(x).$ If one or more of $u_1, u_2, u_3, u_4$ is zero, then proceeding as before we get points of the Type I, II or III. 
		Let us now consider the case where $u_1, u_2, u_3, u_4$ all are non-zero.
		Then we have, 
		\[
		u_1= \frac{\overline{x}_1}{1+p+q+r} , ~ u_2 = \frac{p~ \overline{x}_2}{1+p+q+r} , ~ u_3=  \frac{q ~ \overline{x}_3}{1+p+q+r}, ~ u_4=\frac{r~ \overline{x}_4}{1+p+q+r},
		\]	
		for some $p, q, r> 0.$
		In particular,	
		\[	(u_1, u_2, u_3, u_4)= (t_1 \mu, t_2 \sigma, t_3 \gamma, t_4 \delta), ~ t_1, t_2, t_3, t_4\in (0,1),~  t_1+ t_2+ t_3+t_4=1, 
		\] 	
		where $ \mu, \sigma, \gamma, \delta$ are unimodular. From the Remark
		\ref{remark:rectangle},  $ \mu, \sigma, \gamma, \delta$  form the vertices of a rectangle. Thus in this case the points are of the form of Type IV. 
		This completes the proof. 
	\end{proof}
	
	We are now ready to solve the Fermat-Torricelli problem in the complex plane for four points, by using the same method that was employed in case of three points
	
	\begin{theorem}\label{FT:4points}
		Let   $z_1, z_2, z_3, z_4$ be distinct complex numbers and let  $w \in \mathbb{C}$ be the Fermat-Torricelli point of  $z_1, z_2, z_3, z_4.$ Then exactly one of the following conditions is true: 
		\begin{itemize}
			\item[(i)]  $w = z_l$ for some $l$ and  $z_l \in co(\{z_i, z_j, z_k\}).$ 
			
			\item[(ii)]  $w $ is the intersection of the diagonals of the quadrilateral formed by the vertices $z_1, z_2, z_3, z_4.$
		\end{itemize}
	\end{theorem}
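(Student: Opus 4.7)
The plan is to mirror the strategy used for three points: apply Proposition \ref{Fermat} to translate the Fermat–Torricelli condition into a Birkhoff–James orthogonality statement in $\ell_1^4$, and then invoke Theorem \ref{Fermat-Torricelli set 4} to enumerate the possible shapes of $(z_1-w, z_2-w, z_3-w, z_4-w)$. Concretely, from Proposition \ref{Fermat}, the assumption that $w$ is a Fermat–Torricelli point of $z_1,z_2,z_3,z_4$ (i.e., the unweighted case $\alpha_i=1$) is equivalent to
\[
(z_1-w,\,z_2-w,\,z_3-w,\,z_4-w)\perp_B (1,1,1,1)
\]
in $\ell_1^4$. Hence this vector must be of one of the four types listed in Theorem \ref{Fermat-Torricelli set 4}, and the proof reduces to interpreting each type geometrically.

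First I would eliminate Types I and II immediately: any such representation would force at least two of the coordinates $z_i-w$ to vanish, contradicting the standing hypothesis that $z_1,z_2,z_3,z_4$ are distinct. This leaves Types III and IV, which will correspond respectively to conclusions (i) and (ii).

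For Type III, exactly one coordinate vanishes, say $z_l-w=0$, so $w=z_l$; and the remaining three coordinates equal $\lambda t_m \mu_m$ where the unit vectors $\mu_m$ satisfy $0\in co(\{\mu_1,\mu_2,\mu_3\})$. Writing $z_i-w = \rho_i \mu_i$ with $\rho_i=|z_i-w|>0$ for $i\neq l$, a convex relation $s_1\mu_1+s_2\mu_2+s_3\mu_3=0$ with $s_m\ge 0$, $\sum s_m=1$, translates (after rescaling by $1/\rho_i$ and re-normalizing) into positive coefficients $t_m$ summing to $1$ with $\sum t_m(z_{i_m}-w)=0$, i.e., $w\in co(\{z_i,z_j,z_k\})$, giving (i). For Type IV, all four coordinates are nonzero and the unit vectors $\mu,\sigma,\gamma,\delta$ form a rectangle inscribed in the unit circle; its diagonals are diameters, so these unit vectors split into two antipodal pairs. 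Relabeling the $z_i$'s according to their angular position around $w$, the opposite pair of rays from $w$ through, say, $z_{i_1}$ and $z_{i_3}$ are antipodal, which forces $w$ to lie on the open segment $L[z_{i_1},z_{i_3}]$; likewise for $L[z_{i_2},z_{i_4}]$. Since $w$ sees $z_{i_1},z_{i_2},z_{i_3},z_{i_4}$ in cyclic order around itself, the quadrilateral in this ordering is convex, and the two segments on which $w$ lies are precisely its diagonals, giving (ii).

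The main obstacle is the bookkeeping in Type IV: the labeling $1,2,3,4$ delivered by Theorem \ref{Fermat-Torricelli set 4} is not canonically cyclic, so one must argue carefully that the antipodal pairing coming from the rectangle structure really coincides with the diagonal pairing of the (yet to be shown convex) quadrilateral. I plan to handle this by re-indexing the vertices in cyclic order around $w$ and using the fact that four rays in a rectangle pattern pair antipodally between opposite positions in the cyclic order, together with the elementary observation that a quadrilateral one of whose interior points sees its vertices in cyclic order must be convex. A small auxiliary verification is that in Type III the convex combination relation among unit directions is genuinely equivalent, after rescaling by the (positive) distances $|z_i-w|$, to a convex combination relation among the $z_i$'s themselves; this is routine but worth stating cleanly to land on the precise form of conclusion (i).
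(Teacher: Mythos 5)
Your proposal is correct and follows essentially the same route as the paper: translate via Proposition \ref{Fermat} to the orthogonality relation $(z_1-w,\ldots,z_4-w)\perp_B(1,1,1,1)$ in $\ell_1^4$, rule out Types I and II of Theorem \ref{Fermat-Torricelli set 4} by distinctness, and read off conclusion (i) from Type III (rescaling the convex relation among the unit directions) and conclusion (ii) from Type IV (the antipodal/rectangle structure forcing collinearity of $z_i$, $w$, $z_k$ along the two diagonals). The extra care you take with the cyclic relabeling in Type IV is a point the paper handles simply by a ``without loss of generality'' choice of the antipodal pairing.
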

	
	\begin{proof}
		
		Since $w$ is the Fermat-Torricelli point of $z_1,z_2,z_3,z_4,$ so we have $ (z_1-w, z_2-w, z_3-w, z_4-w) \perp_B (1,1,1,1) $ in the Banach space $\ell_1^4.$    From Theorem \ref{Fermat-Torricelli set 4}, it follows that  $ (z_1-w, z_2-w, z_3-w, z_4-w)$ is one of the four types mentioned in Theorem \ref{Fermat-Torricelli set 4}. As $z_1, z_2, z_3, z_4$ are all distinct, so  $ (z_1-w, z_2-w, z_3-w, z_4-w)$ can  not be of Type I and Type II. We discuss the remaining two cases as follows.
		
		Case I: Let $ (z_1-w, z_2-w, z_3-w, z_4-w)$ be  of the Type III. Without loss of generality we  may and do  assume that $(z_1-w, z_2-w, z_3-w, z_4-w)= \lambda(t_1\mu, t_2 \sigma , t_3 \gamma, 0),$ for some unimodular complex numbers $\mu, \sigma , \gamma \in \mathbb{C}$ and $\lambda \geq 0,~ t_1, t_2, t_3 \in (0,1)$ such that $t_1+ t_2 +t_3 =1.$ Then $ w=z_4.$ Also, from  Theorem \ref{Fermat-Torricelli set 4}, it follows that  $0 \in co(\{\mu, \sigma, \gamma\})$ and so    $ 0 \in co(\{ \frac{1}{\lambda t_1} (z_1-w), \frac{1}{\lambda t_2} (z_2-w), \frac{1}{\lambda t_3} (z_3-w)\}).$  Then it is easy to verify that $z_4 \in co(\{z_1,z_2,z_3\})$ and $w$ satisfies (i).

		Case II: Let  $ (z_1-w, z_2-w, z_3-w, z_4-w)$  be of  the Type IV. As before, it follows that $(z_1-w, z_2-w, z_3-w, z_4 -w) = \lambda ( t_1 \mu, t_2 \sigma, t_3 \gamma, t_4 \delta), $ where $ \mu, \sigma, 	\gamma, \delta $ are unimodular complex numbers and $ t_1, t_2, t_3, t_4 \in (0,1 )$ such that $t_1+ t_2 + t_3+ t_4=1.$  Also, 
		$ \frac{1}{\lambda t_1} (z_1-w), \frac{1}{\lambda t_2} (z_2-w), \frac{1}{\lambda t_3} (z_3-w), \frac{1}{\lambda t_4} (z_4-w) $ form the vertices of a rectangle and $0$ is the intersection of the diagonals of the rectangle. Without loss of generality we may and do assume  that the points $ \frac{1}{\lambda t_1} (z_1-w), 0, \frac{1}{\lambda t_3} (z_3-w) $ as well as  $ \frac{1}{\lambda t_2} (z_2-w), 0, \frac{1}{\lambda t_4} (z_4-w) $ are collinear.  Then it follows that $ z_1, w, z_3$ and $z_2, w, z_4$ are collinear. Therefore, $w$ is the intersection of the diagonals of the quadrilateral formed by the vertices $z_1, z_2, z_3, z_4.$ This establishes the theorem. 
	\end{proof}

	\begin{remark}
		As mentioned in the introduction, explicit solutions to the Fermat-Torricelli problem in the Euclidean plane are already well-known \emph{only} in the case of three points and four points. The readers are referred to \cite{GT,HZ, S96}  for three points, and to \cite{P,  Z2} for four points. The important thing to observe in this context is the fact that the methods proposed in these articles vary greatly depending upon the number of points involved. On the other hand, the method presented in this article remains the same in both the cases and essentially reduces the problem to an orthogonality problem in either $ \ell_1^3 $ or $ \ell_{1}^4. $ %Moreover, it is also clear that an explicit description of the Birkhoff-James orthogonality in $ \ell_1^n $ will lead to a complete solution to the 
	\end{remark}

  The following result allows us to transfer the Fermat-Torricelli problem into a orthogonality problem in $\ell_{1}^n$. The proof is omitted  as it follows directly from Proposition \ref{Fermat}.

	\begin{theorem}
		Let $z_1, z_2, \ldots, z_n$ be  distinct complex numbers. Then $w$ is the Fermat-Torricelli point of $z_1, z_2, \ldots, z_n$ if and only if $(z_1-w, z_2-w, \ldots, z_n-w) \in  {^\perp(1, 1, \ldots, 1)}$ in the space $\ell_{1}^n.$
		\end{theorem}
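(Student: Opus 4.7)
The plan is to obtain this as a direct specialization of Proposition \ref{Fermat} applied to the case $\mathbb{X} = \mathbb{C}$ and $\alpha_1 = \alpha_2 = \cdots = \alpha_n = 1$. With these choices, the ambient space $\ell_1^n(\mathbb{X})$ becomes $\ell_1^n$, the target vector $z = (\alpha_1 z_1, \ldots, \alpha_n z_n)$ becomes $(z_1, \ldots, z_n)$, and the vector $\widetilde{w} = (\alpha_1 w, \ldots, \alpha_n w)$ becomes $(w, w, \ldots, w)$. Consequently the subspace $\mathbb{Y} = \{(\alpha_1 x, \ldots, \alpha_n x) : x \in \mathbb{C}\}$ reduces to $\mathbb{Y} = \{(x, x, \ldots, x) : x \in \mathbb{C}\}$, which is precisely the one-dimensional subspace of $\ell_1^n$ spanned by $\widetilde{1} := (1,1,\ldots,1)$.

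Next I would invoke the equivalence (i) $\Leftrightarrow$ (ii) of Proposition \ref{Fermat}, which yields that $w$ is the Fermat-Torricelli point of $z_1, \ldots, z_n$ if and only if
\[
(z_1 - w, z_2 - w, \ldots, z_n - w) \perp_B \mathbb{Y}
\]
in the Banach space $\ell_1^n$. Here $\perp_B \mathbb{Y}$ means Birkhoff-James orthogonality to every element of $\mathbb{Y}$.

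The remaining small step is to identify $\perp_B \mathbb{Y}$ with the single-vector condition $\perp_B \widetilde{1}$. This follows from the positive homogeneity of Birkhoff-James orthogonality in the right argument: for any vector $u \in \ell_1^n$ and any scalar $\lambda \in \mathbb{C}$, one has $u \perp_B \widetilde{1} \iff u \perp_B \lambda \widetilde{1}$, since $\|u + \mu \lambda \widetilde{1}\| \geq \|u\|$ for all $\mu$ is clearly equivalent to $\|u + \nu \widetilde{1}\| \geq \|u\|$ for all $\nu$ (when $\lambda \neq 0$), and trivially holds when $\lambda = 0$. Since $\mathbb{Y}$ consists exactly of scalar multiples of $\widetilde{1}$, the condition $(z_1 - w, \ldots, z_n - w) \perp_B \mathbb{Y}$ is the same as $(z_1 - w, \ldots, z_n - w) \perp_B \widetilde{1}$, i.e., $(z_1 - w, \ldots, z_n - w) \in {}^\perp(1, 1, \ldots, 1)$, which is the desired conclusion.

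Since no obstacle appears beyond unpacking notation, the statement is essentially a restatement of Proposition \ref{Fermat} in the unweighted complex-scalar case, which is why the authors omit the proof.
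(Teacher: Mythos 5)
Your proposal is correct and follows exactly the route the paper intends: the authors omit the proof precisely because it is the specialization of Proposition \ref{Fermat} to $\mathbb{X}=\mathbb{C}$ with all weights equal to $1$, together with the (homogeneity) observation that orthogonality to the one-dimensional subspace $\mathbb{Y}$ is the same as orthogonality to its spanning vector $(1,1,\ldots,1)$. Your write-up simply makes that unpacking explicit.
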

	
	 To determine the Fermat-Torricelli point for $n$ distinct given points in the complex plane explicitly, we require a complete description of the set $^\perp(1, 1, \ldots, 1)$ in $\ell_{1}^n.$ This has been obtained in the present section for $n=3,4.$ Similarly, in order to solve the weighted Fermat-Torricelli problem for $n$ distinct points in complex plane, we require a complete description of Birkhoff-James orthogonality in $\ell_{1}^n.$

	\section*{Some Properties of the Fermat-Torricelli point}
	
	In this section, we study the behavior of the weighted Fermat-Torricelli point  under the addition or replacement of a point.  We require the following basic observations for our purpose.

	\begin{prop}\label{prop:non-smooth}
		Let $\mathbb{X}=\ell_1^n$ and let $( \neq 0) v=(v_1, v_2, \ldots, v_n) \in \mathbb{X}.$ Suppose that $f(u)= \sum_{i=1}^{n}  d_i u_i,$ for any $u=(u_1, u_2, \ldots, u_n) \in \ell_1^n$ is  a support functional at $v.$ If for some $i\in \{ 1, 2, \ldots, n\},$ $v_i\neq 0,$ then $d_i= \frac{\overline{v_i}}{|v_i|}.$
	\end{prop}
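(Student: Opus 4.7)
The plan is to exploit the fact that the norming condition $f(v) = \|v\|_1$ combined with the dual bound $\|f\|_\infty = \max_i |d_i| = 1$ forces equality throughout the triangle inequality, which in turn pins down each $d_i$ at positions where $v_i \neq 0$.

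First, I would record that since $f \in (\ell_1^n)^* = \ell_\infty^n$ is a supporting functional at $v$, we must have $\|f\| = 1$, i.e. $\max_i |d_i| = 1$, and in particular $|d_i| \leq 1$ for every $i$. The norming identity reads
\[
\sum_{i=1}^n d_i v_i \;=\; f(v) \;=\; \|v\|_1 \;=\; \sum_{i=1}^n |v_i|.
\]
Since the right-hand side is a non-negative real number, taking real parts on the left preserves the equality.

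Next, I would apply the chain of inequalities
\[
\sum_{i=1}^n |v_i| \;=\; \operatorname{Re}\!\Bigl(\sum_{i=1}^n d_i v_i\Bigr) \;\leq\; \sum_{i=1}^n |d_i v_i| \;\leq\; \sum_{i=1}^n |v_i|,
\]
and note that equality throughout forces $\operatorname{Re}(d_i v_i) = |d_i v_i| = |v_i|$ for every index $i$. For any index $i$ with $v_i \neq 0$, this yields $d_i v_i = |v_i|$, whence
\[
d_i \;=\; \frac{|v_i|}{v_i} \;=\; \frac{\overline{v_i}}{|v_i|},
\]
which is precisely the claimed identity.

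There is no genuine obstacle here; the argument is essentially the equality case of the triangle inequality in $\mathbb{C}$, and the only mild point to emphasize is that one must pass through the real part in order to convert the scalar identity $\sum d_i v_i = \sum |v_i|$ into termwise equalities. The coordinates $v_i = 0$ play no role since the conclusion is only claimed at indices where $v_i \neq 0$; at such indices the formula $d_i = \overline{v_i}/|v_i|$ is forced, while the $d_i$ corresponding to $v_i = 0$ remain free (subject only to $|d_i| \leq 1$).
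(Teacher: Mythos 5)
Your proof is correct: the norming identity $f(v)=\|v\|_1$ combined with $\max_i|d_i|=1$ forces equality throughout the triangle inequality termwise, and the equality case in $\mathbb{C}$ (correctly routed through the real part) pins down $d_i=\overline{v_i}/|v_i|$ at every index with $v_i\neq 0$. The paper states this proposition without proof, but your argument is precisely the device it uses to prove the companion Lemma \ref{lemma1}, so the approach is the same and nothing is missing.
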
 
	
	\begin{prop} \label{proposition:smooth points}
		Let $\mathbb{X}=\ell_1^n.$ Then  ${v}=(v_1, v_2, \ldots, v_n) \in \mathbb{X}  $ is a smooth point if and only if $v_i \neq 0,$ for each $i = 1, 2, \ldots, n.$ Moreover, in that case, the unique supporting functional $f$ at ${v}$ is given by     $f( x) =   \sum_{i=1}^{n} x_i \frac{\overline{v}_i}{|v_i|},\, \forall \, x = (x_1,x_2,\ldots,x_n) \in \mathbb{X}.$ 
	\end{prop}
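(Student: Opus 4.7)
The plan is to combine Proposition \ref{prop:non-smooth} with a direct construction of two distinct support functionals when some coordinate vanishes, leveraging the standard duality $(\ell_1^n)^* = \ell_\infty^n$.

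First I would normalize so that $v \in S_{\mathbb{X}}$ (both smoothness and the formula for $f$ are preserved under positive scaling, so this costs nothing). Recall that every $f \in (\ell_1^n)^*$ has the form $f(x) = \sum_{i=1}^n d_i x_i$ for some $(d_1, \ldots, d_n) \in \ell_\infty^n$ with $\|f\| = \max_i |d_i|$. So $J(v)$ corresponds to the set of tuples $(d_1,\ldots,d_n)$ with $\max_i |d_i| \le 1$ such that $\sum_{i=1}^n d_i v_i = 1 = \sum_{i=1}^n |v_i|$.

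For the \emph{if} direction, assume $v_i \neq 0$ for every $i$. By Proposition \ref{prop:non-smooth}, any support functional at $v$ must satisfy $d_i = \overline{v_i}/|v_i|$ for every index $i$. This pins down all coordinates of the representing tuple, so $J(v)$ is a singleton and $v$ is smooth. Moreover, the support functional is precisely $f(x) = \sum_{i=1}^n x_i\,\overline{v_i}/|v_i|$, which is exactly the formula claimed.

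For the \emph{only if} direction, suppose some coordinate, say $v_{i_0}$, equals $0$. I would exhibit two distinct elements of $J(v)$. Define $d_i = \overline{v_i}/|v_i|$ for every $i$ with $v_i \neq 0$, and set $d_{i_0} = 1$ in one choice and $d_{i_0} = -1$ in the other (extend arbitrarily to the remaining zero coordinates). Both choices satisfy $\max_i |d_i| = 1$ and $\sum_i d_i v_i = \sum_{i:\,v_i \neq 0} |v_i| = \|v\|_1 = 1$, so both yield norm-one functionals supporting $v$. Since they differ in the $i_0$-th coordinate, $J(v)$ is not a singleton and $v$ fails to be smooth. No step here poses a real obstacle; the main care needed is simply to ensure that in the second direction the freedom in the zero coordinates actually produces genuinely distinct support functionals, which the explicit construction confirms.
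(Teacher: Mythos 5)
Your proof is correct. The paper itself states this proposition without proof, treating it as a basic observation, and your argument is precisely the standard one implicit in how the paper uses it: identify $(\ell_1^n)^*$ with $\ell_\infty^n$, invoke Proposition \ref{prop:non-smooth} to pin down every coordinate of the representing tuple when all $v_i\neq 0$, and exhibit two distinct supporting functionals by varying the free coordinate $d_{i_0}$ over, say, $\pm 1$ when some $v_{i_0}=0$ (with the harmless caveat that the ``arbitrary'' extension on the remaining zero coordinates must keep $|d_i|\le 1$, and that $v$ should be nonzero for the normalization to make sense).
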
 
	
	We are now ready to prove the main result of this section.
	
	\begin{theorem}\label{z_i=w1}
		Let $ z_1, z_2, \ldots, z_n$ be distinct  complex numbers and let $ \alpha_1, \alpha_2, \ldots, \alpha_n > 0. $ Suppose that $ w \in \mathbb{C}$ is  a weighted Fermat-Torricelli point of $z_1, z_2, \ldots, z_n  $ with respective  weights  $ \alpha_1, \alpha_2, \ldots, \alpha_n$   and $z_i \neq w,$ for any $ 1 \leq i \leq n.$ 
		\begin{itemize}
			\item[(i)]  For any $ \alpha_{n+1} > 0$ and $ z_{n+1} \in \mathbb{C}, $ $ w $ is a weighted  Fermat-Torricelli point of  $ z_1, z_2, \ldots, z_{n+1}$ with respective weights  $\alpha_1, \alpha_2, \ldots,  \alpha_{n+1}$   if and only if $z_{n+1}= w.$
			
			\item[(ii)]  Let $ s_n \in \mathbb{C}.$ Then  $ w $ is a weighted Fermat-Torricelli point of the points $ z_1, z_2, \ldots, z_{n-1} , s_n$ with respective weights  $ \alpha_1, \alpha_2, \ldots, \alpha_n$  if and only if 	$s_n -w= c(z_n -w),$ for some $c \geq 0.$
			
			\item[(iii)]   Let $ s_1,s_2, \ldots, s_m \in \mathbb{C}$ and $ \beta_i > 0 $ for $ i=1,2,\ldots,m.$  Then $ w $ is a weighted Fermat-Torricelli point of the points $ z_1, z_2, \ldots, z_n, s_1, s_2, \ldots , s_m $ with respective weights $\alpha_1, \alpha_2, \ldots, \alpha_n, \beta_1, \beta_2, \ldots, \beta_m$ if and only if $ w $ is a weighted Fermat-Torricelli point of $s_1, s_2, \ldots, s_m$ with respective weights $ \beta_1, \beta_2, \ldots, \beta_m.$
			
			\item[(iv)]  Let $ s_1,s_2,\ldots, s_n \in \mathbb{C}.$  Then  $ w $ is a weighted Fermat-Torricelli point of the points $  s_1, s_2, \ldots , s_n$ with respective weights  $ \alpha_1, \alpha_2, \ldots, \alpha_n$ if $s_i -w= c_i(z_i -w),$ where $ c_i <0,$ for each $i =1, 2, \ldots, n$ or $c_i > 0,$ for each $i = 1, 2, \ldots, n.  $
		\end{itemize}
		
	\end{theorem}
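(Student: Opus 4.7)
The plan is to translate every assertion into an orthogonality statement in $\ell_1^n$ and to exploit a single master identity. By Proposition \ref{Fermat}, the hypothesis means that $(\alpha_1(z_1-w),\ldots,\alpha_n(z_n-w))\perp_B(\alpha_1,\ldots,\alpha_n)$ in $\ell_1^n$. Since $z_i\neq w$ for every $i$, all coordinates of the left-hand vector are nonzero; Proposition \ref{proposition:smooth points} therefore declares the vector smooth, and Theorem \ref{James} forces its unique supporting functional to annihilate $(\alpha_1,\ldots,\alpha_n)$, which gives
\begin{equation}
\sum_{i=1}^n \alpha_i\,\frac{\overline{z_i-w}}{|z_i-w|}=0. \tag{$\ast$}
\end{equation}
All four parts will be derived by comparing $(\ast)$ with the analogous orthogonality condition for each modified system; smoothness is used whenever every coordinate of the new vector is nonzero, and Proposition \ref{prop:non-smooth} is used to control the nonzero slots otherwise.

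For (i), if $z_{n+1}=w$ then the added term $\alpha_{n+1}|z_{n+1}-x|\geq 0$ vanishes at $x=w$, so $w$ remains a weighted Fermat-Torricelli point. Conversely, if $z_{n+1}\neq w$ then every coordinate of the augmented vector is nonzero, smoothness forces $\sum_{i=1}^{n+1}\alpha_i\,\tfrac{\overline{z_i-w}}{|z_i-w|}=0$, and subtracting $(\ast)$ leaves $\alpha_{n+1}\,\tfrac{\overline{z_{n+1}-w}}{|z_{n+1}-w|}=0$, a contradiction. Part (ii) proceeds along the same line: when $s_n\neq w$, smoothness reduces the new orthogonality to $\tfrac{\overline{s_n-w}}{|s_n-w|}=\tfrac{\overline{z_n-w}}{|z_n-w|}$ (after using $(\ast)$ to eliminate the first $n-1$ terms), which is precisely $s_n-w=c(z_n-w)$ with $c>0$. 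The case $c=0$, i.e.\ $s_n=w$, is handled in the other direction by placing $\tfrac{\overline{z_n-w}}{|z_n-w|}$ on the zero slot of the non-smooth supporting functional; the resulting functional annihilates $(\alpha_1,\ldots,\alpha_n)$ by $(\ast)$.

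For (iii), the forward direction starts with a supporting functional $F$ at the combined vector in $\ell_1^{n+m}$ that annihilates the combined weight. Proposition \ref{prop:non-smooth} pins down the $z$-slot coefficients of $F$ as $\tfrac{\overline{z_i-w}}{|z_i-w|}$, so their contribution to $F$ applied to the weights equals the left side of $(\ast)$, namely $0$. Consequently the restriction of $F$ to the $s$-slots is itself a supporting functional at $(\beta_1(s_1-w),\ldots,\beta_m(s_m-w))$ in $\ell_1^m$ that annihilates $(\beta_1,\ldots,\beta_m)$, and Proposition \ref{Fermat} concludes. The converse is analogous: given a witness functional $g$ for the $s$-system, extending it with the canonical coefficients $\tfrac{\overline{z_i-w}}{|z_i-w|}$ on the $z$-slots yields a norm-one functional that attains the norm of the combined vector and annihilates the combined weight by $(\ast)$. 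Finally, (iv) is immediate from $(\ast)$: the hypothesis $s_i-w=c_i(z_i-w)$ with $c_i$ of one fixed sign forces $\tfrac{\overline{s_i-w}}{|s_i-w|}=\varepsilon\,\tfrac{\overline{z_i-w}}{|z_i-w|}$ with a common $\varepsilon\in\{\pm 1\}$, so the analogue of $(\ast)$ for the $s_i$'s equals $\varepsilon$ times $(\ast)$, hence zero.

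I expect the only genuine obstacle to be the bookkeeping at coordinates where some $s_j$ (or $s_n$) coincides with $w$: there the supporting functional is not unique and one must choose coefficients of modulus at most $1$ consistent with the canonical phases on the nonzero slots, which is precisely the freedom granted by Proposition \ref{prop:non-smooth}.
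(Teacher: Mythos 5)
Your proposal is correct and follows essentially the same route as the paper: derive the master identity $\sum_i \alpha_i\,\overline{z_i-w}/|z_i-w|=0$ from smoothness of the $\ell_1^n$ vector via Propositions \ref{Fermat} and \ref{proposition:smooth points} and Theorem \ref{James}, then compare it with the corresponding identity (or constructed supporting functional) for each modified system, using Proposition \ref{prop:non-smooth} to handle zero slots. Your explicit treatment of the degenerate case $s_n=w$ (i.e.\ $c=0$) in part (ii) is, if anything, slightly more careful than the paper's.
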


	\begin{proof}
		
		Since $w \in \mathbb{C}$ is a weighted Fermat-Torricelli point of $z_1, z_2, \ldots, z_n $  with respective weights  $ \alpha_1, \alpha_2, \ldots, \alpha_n,$ so we have,  $ x \perp_B y $ in $ \ell_1^n,$  
		where $x = (\alpha_1(z_1-w), \alpha_2(z_2-w),\ldots, \alpha_n(z_n-w)),  y = (\alpha_1, \alpha_2, \ldots, \alpha_n) \in \ell_1^n.$ Now, $z_i \neq w,$ for each $ i =1,2, \ldots,n$ and so from Proposition \ref{proposition:smooth points},  it follows that $ x $ is a smooth point of $\ell_1^ n.$  Also, the supporting functional $f$ at $x$ is given by $ f(u) =   \sum_{i=1}^{n} u_i 	\frac{\overline{z_i-w}}{|z_i-w|}, \forall \, u = (u_1,u_2,\ldots,u_n) \in \ell_1^n.$     Then applying Theorem \ref{James}, we get $y  \in ker f$  and so 
		\begin{eqnarray}\label{n}
			\alpha_1	\frac{\overline{z_1-w}}{|z_1-w|} + \alpha_2 \frac{\overline{z_2-w}}{|z_2-w|} + \ldots+ \alpha_n \frac{\overline{z_n-w}}{|z_n-w|} =0.
		\end{eqnarray}
		
		(i) The sufficient part follows trivially. We only prove the necessary part. Suppose on the contrary that $  z_{n+1} \neq w.$ 
		Proceeding as above  and noting that $ w$ is a weighted Fermat-Torricelli point of $ z_1, z_2, \ldots,  z_{n+1} $ with respective weights  $ \alpha_1, \alpha_2, \ldots,  \alpha_{n+1},$ we get, 
		\begin{eqnarray}\label{n+1}
			\alpha_1	\frac{\overline{z_1-w}}{|z_1-w|} + \alpha_2 \frac{\overline{z_2-w}}{|z_2-w|} + \ldots+ \alpha_n\frac{\overline{z_n-w}}{|z_n-w|} + \alpha_{n+1} \frac{\overline{z_{n+1}-w}}{|z_{n+1}-w|} =0.
		\end{eqnarray}
		From  (\ref{n}) and (\ref{n+1}) we get, 
		$ \frac{\overline{z_{n+1}-w}}{|z_{n+1}-w|} =0 $ and so $ z_{n+1}= w,$ a contradiction. 
		This completes the proof of (i).

		(ii) 	  We first prove the necessary part. If $s_n =w,$ then we have nothing to prove.  Let us assume that $s_n \neq w.$ Then using Theorem \ref{James}, we get that
		\begin{eqnarray}\label{replace}
			\alpha_1	\frac{\overline{z_1-w}}{|z_1- w|} + \alpha_2 \frac{\overline{z_2-w}}{|z_2-w|} + \ldots+ \alpha_{n-1} \frac{\overline{z_{n-1}-w}}{|z_{n-1}-w|} + \alpha_n \frac{\overline{s_n-w}}{|s_n-w|} =0.
		\end{eqnarray}
		From   (\ref{n}) and (\ref{replace}) we get, 
		\[
		\frac{\overline{z_n-w}}{|z_n-w|} =   \frac{\overline{s_n-w}}{|s_n-w|},
		\]
		which implies that $s_n-w = c(z_n-w),$ where $c= \frac{|s_n-w|}{|z_n-w|}>0. $ 
		
		Let us now prove the sufficient part. 
		By the hypothesis, $z_n-w= c (s_n-w),$ for some $c \geq 0.$ Since $ z_{n} \neq w $ we get, $c >0$ and $ s_n \neq w.$ Then from (\ref{n}) we get, 
		\begin{eqnarray}\label{replace2}
			\alpha_1	\frac{\overline{z_1-w}}{|z_1- w|} + \alpha_2 \frac{\overline{z_2-w}}{|z_2-w|} + \ldots+ \alpha_{n-1} \frac{\overline{z_{n-1}-w}}{|z_{n-1}-w|} + \alpha_n \frac{\overline{s_n-w}}{|s_n-w|} =0.
		\end{eqnarray} 
		
		Let  $ z = (\alpha_1(z_1-w), \alpha_2(z_2-w),\ldots,  \alpha_{n-1}(z_{n-1}-w), \alpha_n(s_n-w)), $ $  y = (\alpha_1, \alpha_2, \ldots, \alpha_n) \in \ell_1^n.$ Consider the linear functional $g$ defined on $\ell_1^n$ as
		\[ g(u) = u_1 \frac{\overline{z_1-w}}{|z_1- w|} + \ldots +  u_{n-1} \frac{\overline{z_{n-1}-w}}{|z_{n-1}-w|} + u_n \frac{\overline{s_n-w}}{|s_n-w|} ,\]
		for any $u=(u_1, u_2, \ldots, u_n) \in \ell_1^n.$
		It is easy to see that $g$ is the supporting functional at $z$ and from equation (\ref{replace2}), we obtain that $ g(y)=0.$ 
		From Theorem \ref{James}, it follows that $ z \perp_B y$  and hence using Proposition \ref{Fermat}, we conclude that $w$ is a  weighted Fermat-Torricelli point of $z_1, z_2, \ldots, z_{n-1}, s_n $  with respective weights  $ \alpha_1, \alpha_2, \ldots, \alpha_n.$ 
		
		(iii) Let us first prove the necessary part. Since $w$ is a weighted Fermat-Torricelli point of  $ z_1, z_2, \ldots, z_n, s_1, s_2, \ldots , s_m$ with respective weights $ \alpha_1, \alpha_2, \ldots, \alpha_n, $ $ \beta_1, \beta_2, \ldots, \beta_m,$ from  Proposition \ref{Fermat} and Theorem \ref{James} it follows that there exists a supporting functional $f$ at $ z \in \ell_1^{n+m}$  such that $ f(z) = \|z\|$ and $f(x_0)=0$, where $z = ( \alpha_1(z_1-w), \alpha_2(z_2-w), \ldots, \alpha_n(z_n-w),  \beta_1(s_1-w), \beta_2(s_2-w), \ldots , \beta_m(s_m-w)) , x_0 = (\alpha_1, \alpha_2, \ldots, \alpha_n, $ $ \beta_1, \beta_2, \ldots, \beta_m) \in \ell_1^{n+m}. $ 
		Then there exist complex numbers $d_1, d_2, \ldots, $ $ d_{n+m}$ such that for any  $ u = (u_1,  u_2, \ldots, u_{n+m}) \in \ell_1^{n+m}, $ $f(u)= \sum_{i=1}^{n+m} d_i u_i.$ As  $\|f\|=1,$ clearly, $\max\{|d_i|: 1 \leq i \leq n+m\}=1.$
		Since $z_i \neq w,$ for any $1 \leq i \leq n,$ from Proposition \ref{prop:non-smooth} it is easy to observe that $d_i= \frac{\overline{z_i-w}}{|z_i-w|},$ for any $i, 1 \leq i \leq n.$
		Now $f(x_0)=0$ implies that 
		\[
		\alpha_1	\frac{\overline{z_1-w}}{|z_1-w|} +  \ldots+ \alpha_n\frac{\overline{z_n-w}}{|z_n-w|} + 	\beta_1	d_{n+1} +  \ldots+ \beta_{m} d_{n+m}  =0.
		\]
		Then using   (\ref{n}), we get that 
		\begin{eqnarray}\label{new-m}
			\beta_1	d_{n+1}+  \beta_2 d_{n+2}+  \ldots+ \beta_m d_{n+m}  =0. 
		\end{eqnarray}
		Consider the linear functional $g$ on $\ell_1^m$ given by 
		$ g(u) = \sum_{i=1}^m u_id_{n+i}, \forall u = (u_1,\ldots,u_m) \in \ell_1^{m}.$ Since $ \max\{|d_{n+i}|: 1 \leq i \leq m\}\leq 1,$ it follows that $\|g\| \leq 1.$
		As $f(z)=\|z\|,$ it is easy to observe that $ g(z_0) = \|z_0\|,$  where $ z_0 = (\beta_1(s_1-w),  \ldots, \beta_m(s_m-w)) \in \ell_1^m$ and from equation (\ref{new-m}), $g (\beta_1, \beta_2, \ldots, \beta_m ) =0.$ 
		As before, using Theorem \ref{James} and Proposition \ref{Fermat}, we conclude that $w$ is the weighted Fermat-Torricelli point of $s_1, s_2, \ldots, s_m$ with respective weights $\beta_1, \beta_2, \ldots, \beta_m.$ 
		
		Next, we  prove the sufficient part. Since $w$ is the weighted Fermat-Torricelli point of $z_1, z_2, \ldots, z_n$ with the respective weights $\alpha_1, \alpha_2, \ldots, \alpha_n$, we obtain that the equation (\ref{n}) holds. As $w$ is also the weighted Fermat-Torricelli point of $s_1, s_2, \ldots, s_m$ with the respective weights $\beta_1, \beta_2, \ldots, \beta_m$, using Proposition \ref{Fermat} and Theorem \ref{James}, we can find a supporting functional $h$ at $z_0 \in \ell_1^m$  such that $h(y_0) =0,$ where 
		$ z_0 = (\beta_1(s_1-w), \beta_2(s_2-w), \ldots, \beta_m(s_m-w)) , y_0 = (\beta_1, \beta_2, \ldots, \beta_m ) \in \ell_1^m.$  Then there exists complex numbers $v_1,v_2,\ldots,v_m$ such that $h(u) = \sum_{i=1}^m u_iv_i, \forall u=(u_1,u_2, \ldots,u_m) \in \ell_1^m.$  Now we define a linear functional $\phi$ on $\ell_1^{n+m}$ as 
		\[ \phi(u) = \sum_{i=1}^n u_i \frac{\overline{z_i-w}}{|z_i-w|}+ \sum_{i=1}^m u_{n+i}v_i, \, \forall u=(u_1,\ldots,u_n,u_{n+1},\ldots,u_{n+m}) \in \ell_1^{n+m}.\]
		As $h(z_0)= \|z_0\|,$ it is easy to	observe that $ \phi(z) = \|z\|,.$ where $z = ( \alpha_1(z_1-w),  \ldots, \alpha_n(z_n-w),  \beta_1(s_1-w),  \ldots , \beta_m(s_m-w)) \in \ell_1^{n+m}.$ Using equation (\ref{n}) and the fact that $h(y_0)=0,$ we obtain that $\phi( \alpha_1, \ldots, \alpha_n, \beta_1, \ldots, \beta_m)=0.$ Thus using Theorem \ref{James} and Proposition \ref{Fermat} we conclude that $w$ is a weighted Fermat-Torricelli point of  $ z_1,  \ldots, z_n, s_1,  \ldots , s_m$ with respective weights $ \alpha_1,  \ldots, \alpha_n, $ $ \beta_1,  \ldots, \beta_m.$ This completes the proof.

		(iv) Assume that  $ s_i-w =c_i(z_i-w),$ where $c_i> 0$ for each $ i =1,2, \ldots,n.$  Then $	\frac{s_i-w}{| s_i-w|}= \frac{z_i-w}{|z_i-w|}.$
		Therefore, following equation (\ref{n}) and using the above condition, we can easily obtain
		\[
		\alpha_1	\frac{\overline{s_1-w}}{|s_1-w|} + \alpha_2\frac{\overline{s_2-w}}{|s_2-w|} + \ldots+ \alpha_n \frac{\overline{s_n-w}}{|s_n-w|} =0.\]
		Then the linear functional $f$ defined on $\ell_1^n$ by 
		$$ f(u) =   \sum_{i=1}^{n} u_i 	\frac{\overline{s_i-w}}{|s_i-w|}, \forall \, u = (u_1,u_2,\ldots,u_n) \in \ell_1^n, $$ 
		is a supporting functional at $ z_0 = (\beta_1(s_1-w),  \ldots, \beta_n(s_n-w)) $ of $\ell_1^n$ and $f(y_0)=0,$ where $ y_0 = (\beta_1, \beta_2, \ldots, \beta_n ) \in \ell_1^n.$ So from Theorem \ref{James} and Proposition \ref{Fermat}, it follows that  $ w $ is a weighted Fermat-Torricelli point of the points $  s_1, s_2, \ldots , s_n$ with respective weights  $ \alpha_1, \alpha_2, \ldots, \alpha_n.$ If $c_i <0$ for each $ i =1,2, \ldots,n,$ then the result follows similarly.
	\end{proof}

	So far we have only discussed the cases where the Fermat-Torricelli point $w$  is distinct from each of the initial points $z_i.$ If the Fermat-Torricelli point $w$ coincides with one of the points $z_i,$ then we have the following result.

	\begin{theorem}\label{z_i=w2} 
		Let $ z_1, z_2, \ldots, z_n$ be distinct  complex numbers and  let $ \alpha_1, \alpha_2, \ldots, \alpha_{n+1} > 0. $ Let $ w$ be a weighted  Fermat-Torricelli point of $ z_1, z_2, \ldots, z_n$ with respective weights $\alpha_1, \alpha_2, \ldots, \alpha_n$  and $z_{i_0}=w,$ for some $ i_0 \in \{1, 2, \ldots, n\}.$
		\begin{itemize}
			\item[(i)] If $\alpha_{n+1}> 2 \alpha_{i_0},$  then there does not exist $z_{n+1} \neq w$ such that $ w$ is a weighted Fermat-Torricelli point of $ z_1, z_2, \ldots, z_n, z_{n+1},$  where the weights are $\alpha_1, \alpha_2, \ldots, \alpha_n, \alpha_{n+1}.$
			
			\item[(ii)] If $\alpha_{n+1}\leq    \alpha_{i_0},$  then there  exists $z_{n+1} \neq w$ such that $ w$ is a weighted Fermat-Torricelli point of $ z_1, z_2, \ldots, z_n, z_{n+1},$ where the weights are $\alpha_1, \alpha_2, $ $ \ldots, \alpha_n, \alpha_{n+1}.$ 
		\end{itemize}
	\end{theorem}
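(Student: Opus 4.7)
The plan is to translate the weighted Fermat-Torricelli condition into an orthogonality relation in $\ell_1^{n+1}$ using Proposition \ref{Fermat} and Theorem \ref{James}, and then exploit the freedom that a supporting functional enjoys at a zero component via Proposition \ref{prop:non-smooth}. Set
\[ S = \sum_{\substack{i=1 \\ i \neq i_0}}^{n} \alpha_i \frac{\overline{z_i - w}}{|z_i - w|}. \]
Since $w$ is a weighted Fermat-Torricelli point of $z_1,\ldots,z_n$ with weights $\alpha_1,\ldots,\alpha_n$, Proposition \ref{Fermat} gives $(\alpha_1(z_1 - w), \ldots, \alpha_n(z_n - w)) \perp_B (\alpha_1,\ldots,\alpha_n)$ in $\ell_1^n$. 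The $i_0$-th coordinate of this vector vanishes, while all others are nonzero. Invoking Theorem \ref{James} together with Proposition \ref{prop:non-smooth}, I would extract a supporting functional $f(u) = \sum_i d_i u_i$ whose coefficients at the nonzero coordinates are forced to be $d_i = \overline{(z_i - w)}/|z_i - w|$, whereas $d_{i_0}$ is free subject to $|d_{i_0}| \leq 1$. The condition $f(\alpha_1,\ldots,\alpha_n) = 0$ then rearranges to $\alpha_{i_0} d_{i_0} = -S$, so that the existence of such a functional is equivalent to the inequality $|S| \leq \alpha_{i_0}$.

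Now suppose $z_{n+1} \neq w$ is added with weight $\alpha_{n+1}$. Write $T = \overline{(z_{n+1} - w)}/|z_{n+1} - w|$, which is unimodular. Repeating the above reasoning in $\ell_1^{n+1}$, the point $w$ is a weighted Fermat-Torricelli point of $z_1,\ldots,z_{n+1}$ if and only if there is some complex $e$ with $|e| \leq 1$ satisfying
\[ S + \alpha_{i_0} e + \alpha_{n+1} T = 0, \]
equivalently $|S + \alpha_{n+1} T| \leq \alpha_{i_0}$.

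For $(i)$, assume $\alpha_{n+1} > 2\alpha_{i_0}$. Since $|T| = 1$ and $|S| \leq \alpha_{i_0}$, the triangle inequality yields $|S + \alpha_{n+1} T| \geq \alpha_{n+1} - |S| \geq \alpha_{n+1} - \alpha_{i_0} > \alpha_{i_0}$, ruling out any admissible $T$ and hence any admissible $z_{n+1} \neq w$. For $(ii)$, assume $\alpha_{n+1} \leq \alpha_{i_0}$. If $S = 0$, any unit $T$ works because $|\alpha_{n+1} T| = \alpha_{n+1} \leq \alpha_{i_0}$; if $S \neq 0$, take $T = -S/|S|$, giving $|S + \alpha_{n+1} T| = \bigl||S| - \alpha_{n+1}\bigr| \leq \max(|S|, \alpha_{n+1}) \leq \alpha_{i_0}$. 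In either case, choosing $z_{n+1} := w + r\overline{T}$ for any $r > 0$ produces a point distinct from $w$ with the desired property.

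The main technical subtlety, and the step I expect to be the trickiest to present cleanly, is the careful bookkeeping between the forced coefficients and the one free coefficient of the supporting functional at the non-smooth vector; this is what converts the orthogonality condition into the single scalar inequality $|S + \alpha_{n+1}T| \leq \alpha_{i_0}$. Once that reduction is in place, the dichotomy between parts $(i)$ and $(ii)$ is just the triangle inequality applied in opposite directions, with the threshold $\alpha_{n+1} = 2\alpha_{i_0}$ arising from the worst-case estimate $|S| = \alpha_{i_0}$ with $S$ and $T$ aligned.
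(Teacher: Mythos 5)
Your proposal is correct and follows essentially the same route as the paper: both translate the condition into Birkhoff--James orthogonality in $\ell_1^{n+1}$, use the fact that a supporting functional at a vector with one zero coordinate has forced unimodular coefficients elsewhere and one free coefficient of modulus at most one, and then settle (i) by the triangle inequality and (ii) by an explicit construction. Your packaging of the whole condition as the single inequality $|S+\alpha_{n+1}T|\leq\alpha_{i_0}$ is a slightly tidier bookkeeping of what the paper does with its $\gamma$ and $\delta$, but it is the same argument.
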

	\begin{proof}
		Without loss of generality we assume that $w=z_1.$   Let $ x= (0, \alpha_1(z_2- z_1), \alpha_2 (z_3- z_1), \ldots, \alpha_n(z_n-z_1)) $ and  $ y = (\alpha_1, \alpha_2, \ldots, \alpha_n).$  Since $ z_1$ is  a weighted Fermat-Torricelli point of $ z_1, z_2, \ldots, z_n,$ where the weights are $\alpha_1, \alpha_2, \ldots, \alpha_n,$ so 
		following Theorem \ref{James} and Proposition \ref{Fermat}, we can find  a supporting functional  $f$ at $	x \in \ell_1^n$ such that $f(y)=0.$ From Proposition \ref{prop:non-smooth}, observe that 
		$$ f(u) = \gamma u_1 + \sum_{i=2}^n u_i \frac{\overline{z_i-z_1}}{|z_i-z_1|}, \, \forall u =(u_1,u_2, \ldots, u_n) \in \ell_1^n,$$ 
		for some complex number $\gamma $ with $ | \gamma | \leq 1.$  Then $f(y)=0$ implies that 
		\begin{equation}\label{eqn:w=z_1v2}
			\alpha_1	\gamma + \alpha_2  \frac{\overline{z_2-z_1}}{|z_2-z_1|} + \alpha_3 \frac{\overline{z_3-z_1}}{|z_3-z_1|}+ \ldots + \alpha_n \frac{ \overline{z_n-z_1}}{|z_n-z_1|} =0.
		\end{equation}
		
		(i) Let $\alpha_{n+1} > 2 \alpha_1.$ Suppose on the contrary that there exists  $z_{n+1} \neq z_1$ such that $ z_1$ is a weighted  Fermat-Torricelli point of $ z_1,  \ldots,  z_{n+1} $ with respective weights $\alpha_1,  \ldots,  \alpha_{n+1}.$ Then  there exists a supporting functional $g$ at $(0, \alpha_1(z_2- z_1), \alpha_2 (z_3- z_1), \ldots, \alpha_{n+1}(z_{n+1}-z_1)) \in \ell_1^{n+1}$ such that $g(\alpha_1, \alpha_2, \ldots, \alpha_{n+1})=0.$ This implies there exists $\delta \in \mathbb{C}$ with $|\delta|\leq 1$ such that 
		\[ g(u) = \delta u_1 +  \sum_{i=2}^{n+1} u_i \frac{\overline{z_i-z_1}}{|z_i-z_1|}, \, \forall u =(u_1,u_2, \ldots, u_n,u_{n+1}) \in \ell_1^{n+1}\]
		and 
		\begin{eqnarray*}
			\alpha_1	\delta + \alpha_2 \frac{\overline{z_2-z_1}}{|z_2-z_1|}+ \alpha_3 \frac{ \overline{z_3-z_1}}{|z_3-z_1|}+ \ldots+ \alpha_n \frac{\overline{z_n-z_1}}{|z_n-z_1|}+ \alpha_{n+1} \frac{\overline{z_{n+1}- z_1}}{|z_{n+1}- z_1|}=0.
		\end{eqnarray*}
		The above equation, along with equation (\ref{eqn:w=z_1v2}), implies that 
		\[
		\frac{\overline{z_{n+1}- z_1}}{|z_{n+1}- z_1|}= \frac{\alpha_1}{\alpha_{n+1}}( \gamma- \delta).
		\]
		As $|\gamma|\leq 1, |\delta| \leq 1, $
		\[
		1= \bigg|\frac{\overline{z_{n+1}- z_1}}{|z_{n+1}- z_1|}\bigg|= \bigg|\frac{\alpha_1}{\alpha_{n+1}}( \gamma- \delta)\bigg| \leq 2\bigg|\frac{\alpha_1}{\alpha_{n+1}}\bigg|< 1.
		\]
		This contradiction proves the first part of the theorem.
		
		(ii) Suppose that $\alpha_{n+1} \leq   \alpha_1.$ We can find   $\delta \in \mathbb{C}$ such that $|\delta| \leq 1$ and $| \delta- \gamma |=\frac{\alpha_{n+1}}{\alpha_1}.$  Take $z_{n+1}= z_1+\frac{\alpha_{1}}{\alpha_{n+1}} ( \overline{\gamma - \delta}).$ Then  $z_{n+1} \neq z_1.$  Consider the linear functional $h$ defined on $\ell_1^{n+1}$ by 
		$$ h(u) = \delta u_1 +  \sum_{i=2}^{n+1} u_i \frac{\overline{z_i-z_1}}{|z_i-z_1|}, \, \forall u =(u_1,u_2, \ldots, u_n,u_{n+1}) \in \ell_1^{n+1}.$$ 
		Then $h(x_0)=\|x_0\|, $ where $ x_0 = (0, \alpha_2(z_2- z_1), \alpha_3(z_3- z_1), \ldots,\alpha_{n+1}( z_{n+1}-z_1)) \in \ell_1^{n+1}$   and  so $h$ is a supporting functional at $x_0.$ Also, from equation (\ref{eqn:w=z_1v2})
		\begin{eqnarray*}
			\alpha_1	\delta &+& \alpha_2 \frac{\overline{z_2-z_1}}{|z_2-z_1|}+ \alpha_3 \frac{ \overline{z_3-z_1}}{|z_3-z_1|}+ \ldots+ \alpha_n \frac{\overline{z_n-z_1}}{|z_n-z_1|}+ \alpha_{n+1} \frac{\overline{z_{n+1}- z_1}}{|z_{n+1}- z_1|} \\
			&=& \alpha_1 (\delta- \gamma) +  \alpha_{n+1}\frac{\overline{z_{n+1}- z_1}}{|z_{n+1}- z_1|}\\ &=&0.
		\end{eqnarray*} 
		which shows that $ h(\alpha_{1}, \alpha_2, \ldots, \alpha_{n+1}) =0.$ 
		Thus from Theorem \ref{James} and Proposition \ref{Fermat}, it follows that $z_1$ is a weighted Fermat-Torricelli point of $z_1, z_2, \ldots,  z_{n+1} $  with respective  weights $\alpha_1, \alpha_2, $ $ \ldots, \alpha_{n+1}.$
	\end{proof}

	We end this section with the following remark.
	
	\begin{remark}
		Let $ z_1, z_2, \ldots, z_n, z_{n+1}$ be distinct  complex numbers and  let $ w \in \mathbb{C}$ be the  Fermat-Torricelli point of $z_1, z_2, \ldots, z_n. $  Then from Theorem \ref{z_i=w1} and Theorem \ref{z_i=w2}, it follows that  $z_i \neq w,$ for any $ i=1,2,\ldots,n$ if and only if  $ w $ is a Fermat-Torricelli point of  $ z_1, z_2, \ldots, z_n , z_{n+1}$ implies that  $z_{n+1}= w.$
	\end{remark}

	%************************************************************************************************************************************************************************************************************************************************************************
	
	\section{Section -II}
	\section*{Chebyshev Center Problem}
	
	The aim of this section is to study the Chebyshev center of finite number of points in Euclidean plane using the concept of Birkhoff-James orthogonality.  The uniqueness of the Chebyshev center follows from \cite{A2}, in fact the uniqueness characterizes uniformly convex Banach spaces.  Let us now rewrite the connection between Birkhoff-James orthogonality and Chebyshev center for finitely many complex numbers.
	
	\begin{prop}\label{prop:cheby}
		Let $z_1, z_2, \ldots, z_n$ be $n$-distinct complex numbers. Then  $w$ is the Chebyshev center of $z_1, z_2, \ldots, z_n$ if and only if 
		$(z_1-w, z_2-w, \ldots, z_n-w) \perp_B (1,1,\ldots, 1) $ in $\ell_\infty^n(\mathbb{C})$.
	\end{prop}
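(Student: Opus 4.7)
The plan is to invoke Proposition \ref{Cheby} with the specific choices $\mathbb{X} = \mathbb{C}$ and $\alpha_1 = \alpha_2 = \cdots = \alpha_n = 1$, reducing the classical (unweighted) Chebyshev center problem for finitely many complex numbers to the already-established weighted version.

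First I would observe that, under the unit-weight specialization, the subspace $\mathbb{Y} = \{(\alpha_1 x, \alpha_2 x, \ldots, \alpha_n x) : x \in \mathbb{C}\}$ appearing in Proposition \ref{Cheby} collapses to the one-dimensional subspace $\mathbb{Y} = \{(x, x, \ldots, x) : x \in \mathbb{C}\}$ of $\ell_\infty^n(\mathbb{C})$, which is precisely $\mathrm{span}\{(1, 1, \ldots, 1)\}$. Similarly, $z = (z_1, z_2, \ldots, z_n)$ and $\widetilde{w} = (w, w, \ldots, w)$. Proposition \ref{Cheby} then asserts that $w$ is the Chebyshev center of $z_1, z_2, \ldots, z_n$ if and only if $(z - \widetilde{w}) \perp_B \mathbb{Y}$.

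The remaining step is to replace orthogonality to the whole subspace $\mathbb{Y}$ with orthogonality to the single vector $(1, 1, \ldots, 1)$. Since Birkhoff-James orthogonality is homogeneous in the second argument (that is, $a \perp_B b$ if and only if $a \perp_B \lambda b$ for every scalar $\lambda$), orthogonality to a one-dimensional subspace is equivalent to orthogonality to any nonzero vector spanning it. Hence $(z - \widetilde{w}) \perp_B \mathbb{Y}$ is equivalent to $(z_1 - w, z_2 - w, \ldots, z_n - w) \perp_B (1, 1, \ldots, 1)$ in $\ell_\infty^n(\mathbb{C})$, which completes the equivalence.

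There is no real obstacle here: the entire content has been done in Proposition \ref{Cheby}, and the present statement is a direct corollary obtained by specializing the weights and using the homogeneity of $\perp_B$. The proof will therefore be essentially one line, phrased as a straightforward application of Proposition \ref{Cheby}.
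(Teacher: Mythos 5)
Your proposal is correct and matches the paper's (implicit) treatment: the paper states this proposition as a direct restatement of Proposition \ref{Cheby} with $\mathbb{X}=\mathbb{C}$ and unit weights, offering no further proof. Your additional remark that orthogonality to the one-dimensional subspace $\mathrm{span}\{(1,\ldots,1)\}$ reduces to orthogonality to the single spanning vector, by homogeneity of $\perp_B$ in the second argument, is exactly the right justification for the final rephrasing.
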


%	\begin{theorem}\cite{A2}
	%	A Banach space $\mathbb{X}$ is uniformly convex if and only if for any nonempty bounded subset $A$ of $\mathbb{X},$ the Chebyshev center is unique.
	%	\end{theorem}
	
%	Now since the complex plane $\mathbb{C}$ is a uniformly convex Banach space, then the solution of the Chebyshev problem given in Proposition \ref{prop:cheby} is unique.
	
We also need the following easy proposition which characterizes the $k$-smooth points in $\ell_{\infty}^n.$
	
	\begin{prop}\label{prop:k-smooth}
		Let $\widetilde{x}=(x_1, x_2, \ldots, x_n) \in \ell_{\infty}^n.$ Then $\widetilde{x}$ is a $k$-smooth point if and only if there exist $i_1, i_2, \ldots, i_k \in \{1, 2, \ldots, n\}$ such that $|x_{i_1}|= |x_{i_2}|= \ldots= |x_{i_k} | > |x_j|,$ for any $j \in \{1, 2, \ldots, n\} \setminus \{i_1, i_2, \ldots, i_n\}.$ Moreover, if $f \in J(\widetilde{x}),$ then $f(u_1, u_2, \ldots, u_n)= \sum_{j=1}^{k} t_j \frac{\overline{x_{i_j}}}{|x_{i_j}|} u_{i_j},$ where $t_j \geq 0, \sum_{j=1}^{k} t_j=1,$ for any $(u_1, u_2, \ldots, u_n) \in \ell_{\infty}^n.$
	\end{prop}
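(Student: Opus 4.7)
The plan is to reduce the claim to a computation in $(\ell_{\infty}^n)^* \cong \ell_1^n$. Normalizing so that $\widetilde{x} \in S_{\ell_{\infty}^n}$, i.e., $\max_i |x_i| = 1$, any supporting functional $f \in J(\widetilde{x})$ is represented by a tuple $(a_1, \ldots, a_n) \in \ell_1^n$ satisfying $\sum_i |a_i| = 1 = \sum_i a_i x_i$. The key step is the chain of inequalities
\[
1 = \Big| \sum_i a_i x_i \Big| \leq \sum_i |a_i||x_i| \leq \max_j |x_j| \cdot \sum_i |a_i| = 1,
\]
which must be equalities throughout.

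Unpacking these equalities forces two conditions. First, from the second inequality, $a_i = 0$ whenever $|x_i| < \|\widetilde{x}\|_\infty = 1$. Second, from the complex triangle equality $|\sum a_i x_i| = \sum |a_i x_i|$, all nonzero summands $a_i x_i$ share a common argument, and the normalization $\sum a_i x_i = 1$ pins that argument to zero, giving $a_i x_i \in [0,\infty)$. Writing $a_i = t_i \overline{x_i}/|x_i|$ with $t_i \geq 0$ on the set $I := \{i : |x_i| = 1\}$, the condition $\sum_i |a_i| = 1$ reduces to $\sum_{i \in I} t_i = 1$. This yields the complete description of $J(\widetilde{x})$ and directly establishes the ``moreover'' clause.

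It then remains to show that $\dim \mathrm{span}\, J(\widetilde{x}) = |I|$. For each $i \in I$, the functional $f_i$ corresponding to $t_i = 1$, $t_j = 0$ for $j \neq i$, lies in $J(\widetilde{x})$; these $\{f_i\}_{i \in I}$ have pairwise disjoint supports in $\ell_1^n$ and are therefore linearly independent. By the previous paragraph, every element of $J(\widetilde{x})$ is a convex combination of the $f_i$'s, so $\mathrm{span}\, J(\widetilde{x}) = \mathrm{span}\{f_i : i \in I\}$ has dimension exactly $|I|$. Hence $\widetilde{x}$ is $k$-smooth if and only if exactly $k$ of the coordinates attain the maximum modulus, which is the stated condition. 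The only delicate point in the argument is the handling of the complex equality case in the triangle inequality; once the phase alignment $a_i x_i \in [0,\infty)$ is extracted, the rest is a direct unwinding of the definitions of supporting functional and $k$-smoothness.
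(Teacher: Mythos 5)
Your proof is correct and complete; note that the paper itself states this proposition without proof, calling it an ``easy proposition,'' so there is no written argument to compare against. Your route --- identifying $(\ell_{\infty}^n)^*$ with $\ell_1^n$, forcing equality in the chain $1=\lvert\sum_i a_ix_i\rvert\le\sum_i\lvert a_i\rvert\lvert x_i\rvert\le\max_j\lvert x_j\rvert\sum_i\lvert a_i\rvert$, and then reading off that $J(\widetilde{x})$ is the convex hull of the disjointly supported functionals $f_i$ --- is exactly the dual counterpart of the paper's own argument for $\ell_1^n$ in Lemma \ref{lemma1} and Theorem \ref{Fermat-Torricelli set 3}, and correctly yields $\dim\mathrm{span}\,J(\widetilde{x})=\lvert I\rvert$, so it is evidently the intended proof.
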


Now we are in a position to present a complete solution to the Chebyshev center problem for finitely many points in the complex plane.

\begin{theorem}\label{Cheby:necessary}
		Let $w$ be the Chebyshev center of the distinct complex numbers $z_1, z_2, \ldots, z_n$  and let $\widetilde{z}= (z_1-w, z_2-w, \ldots, z_n-w) .$
		\begin{itemize}
			\item[(i)] Then $\widetilde{z}$ is not  a smooth point in $\ell_{\infty}^n.$
			\item[(ii)] If $\widetilde{z}$ is a $k$-smooth point of $\ell_{\infty}^n,$ for some $k, 2 \leq k \leq n,$ then there exist distinct $i_1, i_2, \ldots, i_k \in \{1, 2, \ldots, n\}$ such that $w$ is the circumcenter of $z_{i_1}, z_{i_2}, \ldots, z_{i_k}$ and $w \in co(\{z_{i_1}, z_{i_2}, \ldots, z_{i_k}\}).$ Moreover, $|z_j-w|< |z_{i_r}-w|,$ for any $j \in \{1, 2, \ldots, n\} \setminus \{i_1, i_2, \ldots, i_k\}$ and $ r \in \{ 1,2, \ldots,k\}.$
		\end{itemize} 
	\end{theorem}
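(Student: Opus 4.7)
The plan is to combine Proposition \ref{prop:cheby}, Theorem \ref{James}, and Proposition \ref{prop:k-smooth} in a direct way. First, Proposition \ref{prop:cheby} converts the Chebyshev center hypothesis into the orthogonality relation $\widetilde{z} \perp_B (1, 1, \ldots, 1)$ in $\ell_\infty^n$. Then Theorem \ref{James} produces a supporting functional $f \in J(\widetilde{z})$ that annihilates $(1, 1, \ldots, 1)$. Finally, Proposition \ref{prop:k-smooth} forces $f$ to have the explicit form
\[
f(u_1, u_2, \ldots, u_n) = \sum_{j=1}^{k} t_j \frac{\overline{z_{i_j}-w}}{|z_{i_j}-w|}\, u_{i_j},
\]
where $i_1, \ldots, i_k$ are exactly the indices at which $|z_i - w|$ attains its maximum, and $t_j \geq 0$ with $\sum_{j=1}^k t_j = 1$.

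For part (i), I would argue by contradiction. If $\widetilde{z}$ were smooth then $k = 1$, so there is a unique maximizing index $i_1$ and the unique supporting functional reduces to $f(u) = \frac{\overline{z_{i_1}-w}}{|z_{i_1}-w|}\, u_{i_1}$. Evaluating at $(1,1,\ldots,1)$ yields a number of modulus one, contradicting $f(1, \ldots, 1) = 0$ that must hold by Theorem \ref{James}. Hence $\widetilde{z}$ cannot be smooth.

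For part (ii), the existence of distinct indices $i_1, \ldots, i_k$ with $|z_{i_r} - w|$ equal to a common value $r > |z_j - w|$ for $j \notin \{i_1, \ldots, i_k\}$ is immediate from Proposition \ref{prop:k-smooth}; in particular the points $z_{i_1}, \ldots, z_{i_k}$ lie on a common circle centered at $w$, so $w$ is their circumcenter and the ``moreover'' strict inequality is automatic. It remains to show $w \in co(\{z_{i_1}, \ldots, z_{i_k}\})$. From $f(1, \ldots, 1) = 0$ and the explicit form of $f$ above, one obtains
\[
\sum_{j=1}^{k} t_j\, \overline{(z_{i_j} - w)} = 0.
\]
Taking complex conjugates and using $\sum_{j=1}^k t_j = 1$ gives
\[
\sum_{j=1}^{k} t_j z_{i_j} = w \sum_{j=1}^{k} t_j = w,
\]
which is the desired convex representation of $w$.

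The main work is already absorbed into the cited propositions, so I do not anticipate a serious obstacle; the only delicate point is recognizing that the coefficients $t_j$ appearing in the form of $f$ are automatically a probability vector, which is precisely the content of Proposition \ref{prop:k-smooth}, and that the common radius $r$ can be cancelled cleanly before passing from $\sum t_j \overline{(z_{i_j}-w)} = 0$ to the convex-hull identity.
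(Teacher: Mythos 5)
Your proposal is correct and follows essentially the same route as the paper: convert the Chebyshev hypothesis to $\widetilde{z}\perp_B(1,\ldots,1)$ via Proposition \ref{prop:cheby}, extract an annihilating supporting functional via Theorem \ref{James}, and use the explicit form from Proposition \ref{prop:k-smooth} to rule out smoothness and to read off $w=\sum_j t_j z_{i_j}$. The only cosmetic difference is in part (i), where you note that $f(1,\ldots,1)$ has modulus one while the paper phrases the contradiction as forcing $z_i-w=0$; both are valid.
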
	
	
	\begin{proof}
			(i)	Suppose on the contrary that $\widetilde{z}$ is a smooth point of $\ell_\infty^n.$  Then there exists $i, 1\leq i \leq n,$ such that $|z_i-w|>|z_j-w|,$ for any $j \in \{1, 2, \ldots, n\}\setminus \{i\}.$ Suppose that $f \in (\ell_{\infty}^n)^*$ such that $f(u_1, u_2, \ldots, u_n)= \frac{\overline{z_i-w}}{|z_i-w|} u_i,$ for any $(u_1, u_2, \ldots, u_n) \in \ell_{\infty}^n.$
		Clearly, $J(\widetilde{z})= \{f\} .$   Now using Theorem \ref{James}, $f(1,1,\ldots, 1)=0.$ This implies $z_i-w=0,$  which is a contradiction. Therefore $\widetilde{z}$ is not a smooth point of $\ell_\infty^n$. 
		
		(ii) Let $\widetilde{z}$ be a $k$-smooth point. From Proposition \ref{prop:k-smooth}, there exist distinct $i_1, i_2, \ldots, i_k \in \{1, 2, \ldots, n\}$  such that   $|z_{i_1}-w|=|z_{i_2}-w|=  \ldots= |z_{i_k}-w|>|z_j-w|,$ for any $j \in \{1, 2, \ldots,n\} \setminus \{i_1, i_2, \ldots, i_k\}.$ Let $f \in J(\widetilde{z}).$ Again from Proposition \ref{prop:k-smooth} we get that 
		$$f(u_1, u_2, \ldots, u_n)= \sum_{j=1}^{k} t_j \frac{\overline{z_{i_j}-w}}{|z_{i_j-w}|} u_{i_j}, \quad \forall (u_1, u_2, \ldots, u_n) \in \ell_{\infty}^n,$$ 
		for some $t_j \geq 0, \sum_{j=1}^{k} t_j=1.$  Since $w$ is the Chebyshev center of $z_1, z_2, \ldots, z_n,$  from Proposition \ref{prop:cheby}, we obtain that 
		$$ 	(z_1-w, z_2-w, \ldots, z_n-w) \perp_B (1,1,\ldots, 1), ~ \text{in ~$\ell_\infty^n$.}$$
		Therefore, using Theorem \ref{James},  it follows that $f(1, 1, \ldots, 1)=0,$ for some $f \in J(\widetilde{z}).$ This implies $\sum_{j=1}^{k} t_j \frac{\overline{z_{i_j}-w}}{|z_{i_j-w}|} =0,$ for some $t_j \geq 0$ and $\sum_{j=1}^{k} t_j=1.$ By a simple computation, this implies $w = \sum_{j=1}^{k} t_j z_{i_j}.$ So, $w \in co(\{z_{i_1}, z_{i_2}, \ldots, z_{i_k}\}).$ Since $|z_{i_1}-w|=|z_{i_2}-w|=  \ldots= |z_{i_k}-w|,$   we obtain that $w$ is the circumcenter of $z_{i_1}, z_{i_2}, \ldots, z_{i_k}.$ This completes the proof of the theorem.
	\end{proof}
	
	Conversely, we have the following result.
	
\begin{theorem}\label{Cheby:sufficient}
		Let $z_1, z_2, \ldots, z_n$ be distinct complex numbers. Suppose there exist distinct  $i_1, i_2, \ldots, i_k \in \{1, 2, \ldots, n\}$ such that the circumcenter $w$ of $z_{i_1}, z_{i_2}, \ldots, z_{i_k}$ lies in $co(\{ z_{i_1}, z_{i_2}, \ldots, z_{i_k}\}).$ If $|z_j-w|< |z_{i_r}-w|,$ for any $j \in \{1, 2, \ldots, n\} \setminus \{i_1, i_2, \ldots, i_k\}$ and $ r \in \{ 1,2, \ldots,k\}.$ then $w$ is the Chebyshev center of $z_1, z_2, \ldots, z_n.$  
\end{theorem}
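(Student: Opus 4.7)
The plan is to verify, via Proposition \ref{prop:cheby}, that $\widetilde{z} := (z_1-w, z_2-w, \ldots, z_n-w)$ is Birkhoff--James orthogonal to $(1,1,\ldots,1)$ in $\ell_\infty^n$. By Theorem \ref{James}, this reduces to producing a single supporting functional $f \in J(\widetilde{z})$ that annihilates the all-ones vector.

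First I would unpack the smoothness type of $\widetilde{z}$. The hypothesis $|z_{i_1}-w| = \cdots = |z_{i_k}-w|$ (the common circumradius), combined with $|z_j-w| < |z_{i_r}-w|$ for every $j \notin \{i_1,\ldots,i_k\}$, is exactly the condition identified in Proposition \ref{prop:k-smooth} for $\widetilde{z}$ to be $k$-smooth. Consequently, every $f \in J(\widetilde{z})$ has the form
\[
f(u_1, u_2, \ldots, u_n) \;=\; \sum_{j=1}^k t_j\, \frac{\overline{z_{i_j}-w}}{|z_{i_j}-w|}\, u_{i_j}, \qquad t_j \geq 0,\ \sum_{j=1}^k t_j = 1.
\]
Thus I only need to produce weights $t_1, \ldots, t_k$ for which $\sum_{j=1}^k t_j\, \overline{z_{i_j}-w}/|z_{i_j}-w| = 0$, since then automatically $f(1,1,\ldots,1) = 0$.

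Next I would exploit the convex-hull hypothesis. Because $w \in co(\{z_{i_1}, \ldots, z_{i_k}\})$, there exist $s_1, \ldots, s_k \geq 0$ with $\sum_{j=1}^k s_j = 1$ and $\sum_{j=1}^k s_j\, (z_{i_j}-w) = 0$. Writing $r := |z_{i_1}-w| = \cdots = |z_{i_k}-w|$ and $z_{i_j} - w = r e^{i\theta_j}$, this relation becomes $\sum_{j=1}^k s_j\, e^{i\theta_j} = 0$; taking complex conjugates gives $\sum_{j=1}^k s_j\, e^{-i\theta_j} = 0$, which is precisely $\sum_{j=1}^k s_j\, \overline{z_{i_j}-w}/|z_{i_j}-w| = 0$. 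Setting $t_j := s_j$ therefore yields the required supporting functional, and Proposition \ref{prop:cheby} finishes the argument.

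No real obstacle is anticipated; the argument is bookkeeping once the $k$-smoothness of $\widetilde{z}$ is used to constrain the shape of $J(\widetilde{z})$. The one conceptual point worth flagging is the role of the \emph{equal} moduli $|z_{i_j}-w|$: it is exactly this equality that lets the convex combination witnessing $w \in co(\{z_{i_j}\})$ be transferred, via complex conjugation, to a convex combination of the unit vectors $\overline{z_{i_j}-w}/|z_{i_j}-w|$ required by Proposition \ref{prop:k-smooth}. This is precisely where the circumcenter hypothesis is used.
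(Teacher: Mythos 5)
Your proposal is correct and follows essentially the same route as the paper: establish $k$-smoothness of $\widetilde{z}$ via Proposition \ref{prop:k-smooth}, convert the convex-combination witness of $w \in co(\{z_{i_1},\ldots,z_{i_k}\})$ into the vanishing of $\sum_j t_j\,\overline{z_{i_j}-w}/|z_{i_j}-w|$ using the equal moduli, and conclude by Theorem \ref{James} and Proposition \ref{prop:cheby}. Your explicit handling of the complex conjugation step is in fact slightly cleaner than the paper's.
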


\begin{proof}
	As $w$ is the circumcenter of $z_{i_1}, z_{i_2}, \ldots, z_{i_k},$ we have $|z_{i_1}-w|= |z_{i_2}-w|= \ldots= |z_{i_k}- w|.$ Again $|z_j-w|< |z_{i_r}-w|,$ for any $j \in \{1, 2, \ldots, n\} \setminus \{i_1, i_2, \ldots, i_k\}$ and $ r \in \{ 1,2, \ldots,k\}.$ So, from Proposition \ref{prop:k-smooth}, $ \widetilde{z}=(z_1-w, z_2-w, \ldots, z_n-w)$ is a $k$-smooth point. Since $w \in co(\{z_{i_1}, z_{i_2}, \ldots, z_{i_k}\}),$ assume that $w= \sum_{j=1}^{k} t_j z_{i_j},$ where $t_j \geq 0, \sum_{j=1}^{k} t_j=1.$ Consider 
	$$ f(u_1, u_2, \ldots, u_n)= \sum_{j=1}^{k} t_j \frac{\overline{z_i-w}}{|z_i-w|} u_{i_j}, \quad \forall (u_1, u_2, \ldots, u_n) \in \ell_{\infty}^n.$$ 
	Using Proposition \ref{prop:k-smooth}, it is clear that $f \in J(\widetilde{z}).$ Now observe that $f(1, 1, \ldots, 1)= \sum_{j=1}^{k} t_j \frac{\overline{z_{i_j}-w}}{|z_{i_j}-w|}= \frac{1}{|z_{i_1}-w|} (\sum_{j=1}^{k} t_j z_{i_j}-w)=0.$ From Theorem \ref{James} and Proposition \ref{prop:cheby} it follows that $w$ is the Chebyshev center of $z_1, z_2, \ldots, z_n.$
\end{proof}

Following a method similar to those used in the proofs of Theorems \ref{Cheby:necessary} and  \ref{Cheby:sufficient},  we can also solve the Chebyshev center problem for the weighted case.

\begin{theorem}
		Let $z_1, z_2, \ldots, z_n$ be distinct complex numbers and $\alpha_1, \alpha_2, \ldots, \alpha_n > 0. $ Then $w$ is the weighted Chebyshev center of the points $z_1, z_2, \ldots, z_n$ with the respective weights $\alpha_1, \alpha_2, \ldots, \alpha_n$ if and only if  there exist  distinct $i_1, i_2, \ldots, i_k \in \{1, 2, \ldots, n\}$  such that  $w \in co(\{z_{i_1}, z_{i_2}, \ldots, z_{i_k}\})$ and $ \alpha_1|z_{i_1}- w|= \alpha_2|z_{i_2}- w|= \ldots = \alpha_k|z_{i_k}- w|,$ where $2 \leq k \leq n.$  Moreover, $\alpha_j|z_j-w|< \alpha_1|z_{i_1}-w|,$ for any $j \in \{1, 2, \ldots, n\} \setminus \{i_1, i_2, \ldots, i_k\}.$
	 \end{theorem}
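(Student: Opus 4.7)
The plan is to mirror the strategy used in Theorems \ref{Cheby:necessary} and \ref{Cheby:sufficient}, but with the all-ones vector replaced by $y=(\alpha_1,\alpha_2,\ldots,\alpha_n)$. By Proposition \ref{Cheby}, the statement ``$w$ is a weighted Chebyshev center'' is equivalent to $\widetilde z\perp_B y$ in $\ell_\infty^n$, where $\widetilde z=(\alpha_1(z_1-w),\ldots,\alpha_n(z_n-w))$. Via Theorem \ref{James}, this orthogonality amounts to the existence of some $f\in J(\widetilde z)$ with $f(y)=0$, and I will use the description of $J(\widetilde z)$ furnished by Proposition \ref{prop:k-smooth} to convert this equation into the geometric conditions in the statement.

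For the forward direction, I would first rule out the smooth case: if $\widetilde z$ were smooth, the unique supporting functional would be concentrated on the coordinate of largest modulus, and the equation $f(y)=0$ would force that coordinate to vanish, contradicting the maximality. Hence $\widetilde z$ is $k$-smooth for some $2\le k\le n$, and Proposition \ref{prop:k-smooth} yields indices $i_1,\ldots,i_k$ with $\alpha_{i_1}|z_{i_1}-w|=\cdots=\alpha_{i_k}|z_{i_k}-w|=:r$, which is positive since $k\ge 2$ and the $z_i$ are distinct, and strictly greater than every $\alpha_j|z_j-w|$ for $j\notin\{i_1,\ldots,i_k\}$. Every $f\in J(\widetilde z)$ takes the form
\[
f(u)=\sum_{j=1}^{k} t_j\,\frac{\overline{z_{i_j}-w}}{|z_{i_j}-w|}\,u_{i_j},\qquad t_j\ge 0,\ \sum_j t_j=1,
\]
since the factor $\alpha_{i_j}$ cancels between numerator and denominator after applying Proposition \ref{prop:k-smooth} to $\widetilde z$. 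Setting $f(y)=0$ and using $|z_{i_j}-w|=r/\alpha_{i_j}$, the equation reduces, after taking conjugates, to $\sum_j t_j\alpha_{i_j}^2(z_{i_j}-w)=0$. Normalising the non-negative coefficients $t_j\alpha_{i_j}^2$ to a probability vector $(s_j)$ gives $w=\sum_j s_j z_{i_j}$, placing $w$ in $co(\{z_{i_1},\ldots,z_{i_k}\})$.

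For the converse, the indices $i_1,\ldots,i_k$ with the stated properties immediately make $\widetilde z$ a $k$-smooth point by Proposition \ref{prop:k-smooth}. Writing $w=\sum_j s_j z_{i_j}$ from the convex-hull hypothesis, I define $t_j:=c s_j/\alpha_{i_j}^2$ with $c>0$ chosen so that $\sum_j t_j=1$, and build $f\in J(\widetilde z)$ via the display above. A direct computation then gives $f(y)=(c/r)\overline{\sum_j s_j(z_{i_j}-w)}=0$, so $\widetilde z\perp_B y$ by Theorem \ref{James}, and Proposition \ref{Cheby} yields the conclusion. The only real subtlety is bookkeeping between two different probability vectors: the weights $(t_j)$ that appear naturally in the supporting functional at $\widetilde z$, and the weights $(s_j)$ expressing $w$ as a convex combination of $\{z_{i_j}\}$. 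They are related by $t_j\propto s_j/\alpha_{i_j}^2$, which is precisely the conversion factor produced by the two appearances of $\alpha_{i_j}$ (one in the coordinate of $y$, one in the entry of $\widetilde z$ after normalisation by $|z_{i_j}-w|$); once this identification is made, the argument parallels the unweighted case and presents no further obstacle.
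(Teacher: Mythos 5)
Your proposal is correct and is precisely the argument the paper intends: the paper omits the proof, stating only that it follows the method of Theorems \ref{Cheby:necessary} and \ref{Cheby:sufficient}, and your adaptation (replacing $(1,\ldots,1)$ by $(\alpha_1,\ldots,\alpha_n)$, ruling out the smooth case, and tracking the conversion $t_j\propto s_j/\alpha_{i_j}^2$ between the supporting-functional weights and the convex-combination weights) carries that method through correctly. No gaps.
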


	The following example illustrates the computational effectiveness of Theorem \ref{Cheby:sufficient}, in explicitly finding the Chebyshev center of a finite number of distinct points in the complex plane.

	\begin{example}
		Let $z_1= 4+i, z_2= 1 + 2i, z_3= 2-i, z_4= 3+ (1+\sqrt{2})i, z_5= 2-\sqrt{3}.$ To find the Chebyshev center, from the Theorem \ref{Cheby:necessary}, we only need to consider the circumcenters of the $k$-points from $z_1, z_2, \ldots, z_5,$ for any $k, 2 \leq k \leq 5.$ Observe that this set of the circumcenters is a finite set.
	 Let $u_{ij}$ be the midpoint of the line segment $L[z_i, z_j],$ for any $1 \leq i \neq j \leq 5.$ Clearly, $u_{12}= \frac{5}{2} +  \frac{3}{2}i$ and $|u_{12}- z_1|= \frac{\sqrt{10}}{2} .$ But we can check that $|u_{12}-z_3| = \frac{\sqrt{26}}{2}> \frac{\sqrt{10}}{2},$ so from Theorem \ref{Cheby:necessary}, $u_{12}$ can not be the Chebyshev center. By a similar computation, we can check that $u_{ij}$ is not the Chebyshev center for any $1 \leq  i \neq j \leq 5.$  Next, taking the three points $z_1, z_2, z_3,$ we check that $w'=\frac{9}{4} + \frac{3}{4}i$ is the circumcenter of $z_1, z_2, z_3.$ But $|z_1- w'|< |z_4-w'|,$ so $w'$ can not be the Chebyshev center. Similarly, we check other three points. In particular, considering the points $z_1, z_3, z_4$ we observe that $w= 2+i$ is the circumcenter of $z_1, z_3, z_4.$ Now, $|z_1-w|=2> |z_2-w|$ but $|z_5-w|=2.$ This implies that $w$ is the circumcenter of $z_1, z_3, z_4, z_5$ and $w \in co(\{z_1, z_3, z_4, z_5\}).$ Also $|z_2-w|< |z_1-w|=2.$ So from Theorem \ref{Cheby:sufficient}, $w= 2 + i$ is the Chebyshev center of these five points. 
	\end{example}

%In the next result we characterize the cyclic polygons whose circumcenter lies inside the polygon. We should note that a cyclic polygon is a polygon with vertices upon which a circle can be circumscribed.	The  result follows immediately from the above given algorithm.
	
%	\begin{theorem}
%		Let $z_1, z_2, \ldots, z_n$ be  distinct complex numbers. Then the polygon formed by $z_1, z_2, \ldots, z_n$ as its vertices is cyclic and the circumcenter is inside the polygon if and only if $w$ is the Chebyshev center of $z_1, z_2, \ldots, z_n$ satisfy the Step-$(n-1).$
%	\end{theorem}

	In the following two results, we address the cases where the Fermat-Torricelli point and the Chebyshev center coincides for three and four points respectively.
	
	\begin{theorem}\label{coincide:three}
		Let $z_1, z_2, z_3$ be three distinct complex numbers such that $z_1, z_2, z_3$ are non-collinear. Then the Fermat-Torricelli point  and the Chebyshev center of $z_1, z_2, z_3 $ coincide if and only if $z_1, z_2, z_3$ form the vertices of an  equilateral triangle.
	\end{theorem}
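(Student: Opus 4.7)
The plan is to combine Corollary \ref{original} (which pins down the Fermat-Torricelli point) with Theorem \ref{Cheby:necessary} and Theorem \ref{Cheby:sufficient} (which characterize the Chebyshev center via $k$-smoothness in $\ell_\infty^3$), and then match the two descriptions. The reverse direction is a quick check: if $w$ is the centroid of an equilateral triangle with vertices $z_1, z_2, z_3$, then the vectors $z_i - w$ have a common modulus $r$ and pairwise $2\pi/3$ angles, which verifies condition (ii) of Corollary \ref{original} as well as the hypotheses of Theorem \ref{Cheby:sufficient} (applied with $k = 3$ and $w \in co(\{z_1, z_2, z_3\})$), identifying $w$ simultaneously as the Fermat-Torricelli point and the Chebyshev center.

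For the forward direction, let $w$ denote the common point. First I would rule out $w = z_i$ for some $i$. Suppose toward contradiction that $w = z_1$. Since $w$ is the Chebyshev center, Theorem \ref{Cheby:necessary} forces $\widetilde{z} := (z_1-w, z_2-w, z_3-w)$ to be $k$-smooth in $\ell_\infty^3$ for some $k \in \{2, 3\}$, with an associated index set realizing the common (and maximal) distance from $w$. As $|z_1 - w| = 0$ is strictly smaller than the other two distances (nonzero by distinctness), the active index set must be $\{2, 3\}$ with $k = 2$; Theorem \ref{Cheby:necessary}(ii) then places $w = z_1$ at the midpoint of $z_2$ and $z_3$, violating the non-collinearity hypothesis.

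Thus $w \neq z_i$ for each $i$, so Corollary \ref{original}(ii) applies and yields pairwise $2\pi/3$ (unsigned) angles among the vectors $z_1 - w, z_2 - w, z_3 - w$. Now I would switch back to the Chebyshev description of the same $w$: if $\widetilde{z}$ were $2$-smooth, then Theorem \ref{Cheby:necessary}(ii) would make $w$ the midpoint of two of the $z_i$'s, forcing the corresponding pair of vectors $z_{i_1} - w$, $z_{i_2} - w$ to be antipodal and hence to subtend an angle of $\pi$ at $w$, contradicting the $2\pi/3$ condition. Hence $\widetilde{z}$ is $3$-smooth, which gives $|z_1 - w| = |z_2 - w| = |z_3 - w| =: r$. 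Combined with the pairwise $2\pi/3$ angle condition, the three vectors $z_i - w$ must equal $r e^{i\theta_0}, r e^{i(\theta_0 + 2\pi/3)}, r e^{i(\theta_0 + 4\pi/3)}$ in some order, so that $z_1, z_2, z_3$ form an equilateral triangle centered at $w$. The main obstacle is the clean elimination of the boundary cases (both $w = z_i$ and the $2$-smooth alternative); once the non-collinearity hypothesis has done that work, the remaining step is simply matching the angle condition from Corollary \ref{original} against the equidistance condition from Theorem \ref{Cheby:necessary}.
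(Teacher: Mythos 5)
Your proposal is correct and follows essentially the same route as the paper: both directions rest on Corollary \ref{original} for the Fermat--Torricelli side and Theorems \ref{Cheby:necessary} and \ref{Cheby:sufficient} for the Chebyshev side, the $2$-smooth case is eliminated by the same midpoint-versus-$2\pi/3$-angle contradiction, and the $3$-smooth case yields equidistance which combines with the angle condition to force an equilateral triangle. Your explicit preliminary elimination of $w=z_i$ and your use of complex exponentials in place of the paper's isosceles-triangle angle chase are only cosmetic differences.
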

	
	\begin{proof}
			Let us first prove the easier sufficient part. Let $w$ be the centroid of the triangle whose vertices are $z_1, z_2, z_3.$ It is immediate that $w$ is the circumcenter of $z_1, z_2, z_3$ and so from Theorem \ref{Cheby:sufficient}, it follows that  $w$ is the Chebyshev center of $z_1, z_2, z_3.$
		 Moreover, from Corollary \ref{original}, it is obvious that $w$ is the Fermat-Torricelli point of $z_1, z_2, z_3.$ This completes the sufficient part. 
		
		We next prove the necessary part. Let $w$ be the Fermat-Torricelli point as well as the Chebyshev center of $z_1, z_2, z_3.$ From Theorem \ref{Cheby:necessary}, it follows that $ \widetilde{z}=(z_1-w, z_2-w, z_3-w)$ is not a smooth point in $\ell_{\infty}^3.$ Then $ \widetilde{Z}$ is either a 2-smooth or a 3-smooth point. First we consider the possibility that $\widetilde{z}$ is a $2$-smooth point. Following Theorem \ref{Cheby:necessary}, there exist two points say $z_1, z_2$ such that $|z_1-w|=|z_2-w|$ and $w \in L[z_1, z_2].$ Therefore, it is immediate that $w= \frac{1}{2}(z_1+z_2).$ In other words, 
	 $w$ is the midpoint of the edge $L[z_1, z_2]$ of the triangle formed by the vertices $z_1, z_2, z_3$. But $w$ is the Fermat-Torricelli point of $z_1, z_2, z_3,$ therefore using Corollary \ref{original}, it is immediate that $w$ can not be the mid point of an edge of the triangle. Therefore, $\widetilde{z}$ is not a $2$-smooth point of $\ell_{\infty}^3.$ So, $\widetilde{z}$ is a $3$-smooth point and therefore, $|z_1-w|=|z_2-w|=|z_3-w|.$		
		Now  consider the triangle formed by the vertices $z_1, z_2$ and $w.$ Clearly, it is an isosceles triangle and the angle formed at  $w$ by the two sides $L[w,z_1]$ and $ L[w,z_2] $ is $\frac{2 \pi}{3}.$ So, the angle forms at the points $z_1$ and $z_2$ by the sides $ L[w,z_1], L[z_2,z_1]$ and $L[w,z_2], L[z_1,z_2],$ respectively is $\frac{\pi}{6}.$ Again considering the triangle formed by the vertices $z_1, z_3$ and $w$ and following similar arguments, we obtain that the angle formed at the points $z_1$ by the sides $L[w,z_1], L[z_2,z_1] $ is $\frac{\pi}{6}.$ Therefore, in the triangle $z_1, z_2, z_3$ the angle formed at $z_1$ is $\frac{\pi}{3}.$ Similarly, we can show that the other angles of the triangle  formed by the vertices  $z_1, z_2$ and $z_3$ are also $\frac{\pi}{3}.$ Thus the triangle formed by the vertices $z_1, z_2, z_3$ is an equilateral triangle. This completes the proof.
	\end{proof} 
	
	\begin{theorem}\label{coincide:four}
	Let $z_1, z_2, z_3, z_4$ be four distinct complex numbers forming the vertices of  a convex quadrilateral. Let $w$ be the intersection point of the two diagonals.  Then the Fermat-Torricelli point  and the Chebyshev center of $z_1, z_2, z_3, z_4 $ coincide if and only if there exist two opposite vertices $z_i, z_j$ such that $|z_i-w|=|z_j-w|\geq \max\{|z_k-w|: k \in \{1, 2, 3, 4\} \setminus \{i,j\}\}.$
\end{theorem}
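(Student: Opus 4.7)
My plan is to reduce the equivalence to the already-established characterizations of the Fermat-Torricelli point (Theorem \ref{FT:4points}) and of the Chebyshev center (Theorems \ref{Cheby:necessary} and \ref{Cheby:sufficient}). The first observation is that since $z_1, z_2, z_3, z_4$ form a convex quadrilateral, no vertex lies in the convex hull of the remaining three, so option (i) of Theorem \ref{FT:4points} is ruled out. Consequently the Fermat-Torricelli point is forced to be the intersection of the diagonals, namely $w$ itself. Thus the assertion to prove collapses to the statement that $w$ is the Chebyshev center of $\{z_1,z_2,z_3,z_4\}$ if and only if the stated condition on some pair of opposite vertices holds.

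For the forward direction, assuming $w$ is the Chebyshev center, I would apply Theorem \ref{Cheby:necessary} to $\widetilde{z} = (z_1-w, z_2-w, z_3-w, z_4-w) \in \ell_\infty^4$. Part (i) rules out smoothness, and part (ii) yields an index set $S \subset \{1,2,3,4\}$ of cardinality $k \in \{2,3,4\}$ on which $|z_i - w|$ is constant, equal to the overall maximum, with $w \in co(\{z_i : i \in S\})$. The key point is that $S$ must then contain a pair of opposite vertices. If $|S| \geq 3$, this is automatic, since any three of the four indices contain at least one of the two diagonal pairs $\{1,3\}$, $\{2,4\}$. The delicate case is $|S| = 2$, say $S = \{i,j\}$: then $w \in L[z_i, z_j]$, but since $w$ is the intersection of the diagonals of a genuine convex quadrilateral it lies strictly in the interior, so it cannot lie on any edge; hence $\{i,j\}$ must be a diagonal pair, which gives the claimed condition.

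For the reverse direction, I would take opposite vertices $z_i, z_j$ with $|z_i - w| = |z_j - w| \geq |z_k - w|$ for $k \notin \{i, j\}$, and aim to invoke Theorem \ref{Cheby:sufficient}. Without loss of generality $\{i,j\} = \{1,3\}$; since $w \in L[z_1, z_3]$ together with $|z_1 - w| = |z_3 - w|$ forces $w$ to be the midpoint of $L[z_1, z_3]$, taking $S = \{k : |z_k - w| = |z_1 - w|\}$ gives $\{1,3\} \subset S$, $w \in co(\{z_1, z_3\}) \subset co(\{z_k : k \in S\})$, and $w$ equidistant from every $z_k$ with $k \in S$, hence their circumcenter. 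Theorem \ref{Cheby:sufficient} then certifies that $w$ is the Chebyshev center. The principal obstacle in the whole argument is the geometric step excluding adjacent pairs in the $|S| = 2$ case; this is precisely where the convexity hypothesis is used essentially, since in a non-convex configuration the diagonal intersection can in principle land on an edge and the clean dichotomy breaks down.
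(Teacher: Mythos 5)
Your proposal is correct and follows essentially the same route as the paper: reduce to showing $w$ is the Chebyshev center via Theorem \ref{FT:4points}, then split on the smoothness order of $\widetilde{z}$ using Theorems \ref{Cheby:necessary} and \ref{Cheby:sufficient}, with the $2$-smooth case settled by noting that $w\in L[z_i,z_j]$ forces $\{z_i,z_j\}$ to be a diagonal pair. The only cosmetic difference is that you cite Theorem \ref{Cheby:necessary}(ii) to get $w\in co(\{z_i,z_j\})$ directly, where the paper re-derives $w=\tfrac12(z_1+z_2)$ from the supporting functional; your write-up is, if anything, slightly more careful about the strict inequality needed to invoke Theorem \ref{Cheby:sufficient}.
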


\begin{proof}
	Since $w$ is the intersection point of the two diagonals, from Theorem \ref{FT:4points} it follows that $w$ is the Fermat-Torricelli point. Also, from Theorem \ref{Cheby:sufficient}, it is easy to observe that $w$ is the Chebyshev center of the points $z_1,z_2, z_3, z_4.$
	
	Let us now prove the necessary part. Since the Chebyshev center and the Fermat-Torricelli point coincides,  $w$ is the Chebyshev center of $z_1, z_2, z_3, z_4.$ Let $\widetilde{z}=(z_1-w, z_2-w, z_3-w, z_4-w).$ So, from Theorem \ref{Cheby:necessary}, $\widetilde{z}$ is not a smooth point. Now consider that $\widetilde{z}$ is a $2$-smooth point. Without loss of generality, assume that $|z_1-w|=|z_2-w|> \max \{|z_3-w|, |z_4-w|\}.$  Now for any $f \in J(\widetilde{z}),$ $f(u_1, u_2, u_3, u_4)= (1-t) \frac{\overline{z_1-w}}{|z_1-w|} u_1+ t  \frac{\overline{z_2-w}}{|z_2-w|} u_2, $ where $0 \leq t \leq 1.$  As $w$ is the Chebyshev center, $\widetilde{z} \perp_B (1, 1, 1, 1)$ and therefore, using Theorem \ref{James}, we obtain that $$ (1-t) \frac{\overline{z_1-w}}{|z_1-w|}+ t  \frac{\overline{z_2-w}}{|z_2-w|} =0,$$ for some $t \in [0,1].$  By a simple computation, we obtain that $w = \frac{1}{2} (z_1+z_2).$ Since $w$ is the intersection point of the diagonals and $w \in L[z_1, z_2],$ therefore $z_1, z_2$ are two opposite vertices. So, we obtain the required result.
	If $\widetilde{z}$ is a $3$-smooth point, then without loss of generality assume that $|z_1-w|=|z_2-w|=|z_3-w|>|z_4-w|.$ It is immediate that there are two vertices from $z_1, z_2, z_3$ which are opposite vertices. The result follows trivially when  $\widetilde{z}$ is a $4$-smooth point.
\end{proof}
	
	In the following remark we consider the case where the four points form the vertices of a non-convex quadrilateral.
	
	\begin{remark}
		Let $z_1, z_2, z_3, z_4$ be four distinct complex numbers such that $z_4 \in co(\{z_1, z_2, z_3\}).$ Then the Fermat-Torricelli point and the Chebyshev center coincide if and only if $z_4$ is the circumcenter of the points $z_1, z_2, z_3.$
	\end{remark}

	 As a direct corollary of Theorem \ref{coincide:four}, we record the following observation. 
	
	\begin{cor}
		Let $z_1, z_2, z_3, z_4$ be four distinct complex numbers such that $z_1, z_2, z_3, z_4$ form the vertices of a parallelogram. Then  the Fermat-Torricelli point  and the Chebyshev center of $z_1, z_2, z_3, z_4 $ coincide.
	\end{cor}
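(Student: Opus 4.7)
The plan is to reduce the statement directly to Theorem \ref{coincide:four} by locating $w$ at the intersection of the diagonals and verifying the equal-distance condition for a pair of opposite vertices. First I would observe that a (non-degenerate) parallelogram is automatically a convex quadrilateral, so Theorem \ref{coincide:four} is applicable, provided we index the vertices $z_1, z_2, z_3, z_4$ cyclically around the parallelogram so that $\{z_1, z_3\}$ and $\{z_2, z_4\}$ are the two pairs of opposite vertices.

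Next I would exploit the well-known characterization of parallelograms in the complex plane via the relation $z_1 + z_3 = z_2 + z_4$. This immediately yields that the diagonals $L[z_1, z_3]$ and $L[z_2, z_4]$ bisect each other at the common midpoint
\[
w \;=\; \tfrac{1}{2}(z_1 + z_3) \;=\; \tfrac{1}{2}(z_2 + z_4),
\]
which is the intersection of the two diagonals. From this formula we read off $|z_1 - w| = |z_3 - w|$ and $|z_2 - w| = |z_4 - w|$.

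Finally, without loss of generality assume $|z_1 - w| \geq |z_2 - w|$ (otherwise swap the roles of the two pairs). Then the opposite vertices $z_1, z_3$ satisfy
\[
|z_1 - w| \;=\; |z_3 - w| \;\geq\; \max\bigl\{\,|z_2 - w|,\,|z_4 - w|\,\bigr\},
\]
which is precisely the condition appearing in the statement of Theorem \ref{coincide:four}. Invoking that theorem concludes that the Fermat-Torricelli point and the Chebyshev center of $z_1, z_2, z_3, z_4$ coincide (and both are equal to $w$). I do not anticipate any real obstacle here, since both ingredients — the convexity of a parallelogram and the bisecting property of its diagonals — are elementary, and the corollary is essentially a direct verification of the hypotheses of Theorem \ref{coincide:four}.
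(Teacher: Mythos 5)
Your argument is correct and is precisely the one the paper intends: it states the result as a direct corollary of Theorem \ref{coincide:four} without further proof, and your verification (diagonals of a parallelogram bisect each other, so each pair of opposite vertices is equidistant from $w$, and whichever pair realizes the larger distance satisfies the hypothesis of that theorem) is the evident route. No gaps.
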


	We now present a remark in which we give a formula for the Chebyshev radius.
	
	\begin{remark}
		Let $\mathcal{D}_n$ be the space of all $n \times n$ complex diagonal matrices. By the notation $((a_1, a_2, \ldots, a_n)) \in \mathcal{D}_n$ we denote the diagonal matrix $A$ whose diagonal entries are $a_1, a_2, \ldots, a_n,$ respectively. It is rather straightforward to observe that $\ell_{\infty}^n(\mathbb{C})$ is isometrically isomorphic to $\mathcal{D}_n$ endowed with the usual operator norm.   Naturally, an  isometry between these two spaces is  $\psi: \ell_{\infty}^n \to \mathcal{D}_n$ defined as $\psi(x_1, x_2, \ldots, x_n) = ((x_1, x_2, \ldots, x_n)).$ Therefore, the orthogonality problem defined in Proposition \ref{prop:cheby} can also be presented as an orthogonality problem in $\mathcal{D}_n.$ Suppose that $z_1, z_2, \ldots, z_n$ are distinct complex numbers and $\alpha_1, \alpha_2, \ldots, \alpha_n > 0.$ Let $w$ be the weighted Chebyshev center of $z_1, z_2, \ldots, z_n$ with respective weights $\alpha_1, \alpha_2, \ldots, \alpha_n.$ Let $T= ((\alpha_1 z_1, \alpha_2 z_2, \ldots, \alpha_n z_n)), A=((\alpha_1, \alpha_2, \ldots, \alpha_n)) \in \mathcal{D}_n.$ Then from Proposition \ref{prop:cheby} and using the identification between $\ell_{\infty}^n$ and $\mathcal{D}_n,$ we obtain that  $w$ is the weighted  Chebyshev center if and only if $(T-wA) \perp_B A,$ in the space $\mathcal{D}_n.$ Therefore, the weighted  Chebyshev radius $r$ is equal to $dist(T, span\{A\}).$ Using Theorem \cite[Th. 5.2]{S21}, we get that
		\begin{eqnarray*}
			r= dist(T, span\{A\}) &=&  \sup \{|\langle Tx, y \rangle|: \|x\|=\|y\|=1, \langle Ax,y \rangle =0\} \\
			&=& \sup \bigg\{\bigg|\sum_{i=1}^{n} z_i x_i \overline{y_i} \bigg|:\sum_{i=1}^{n} |x_i|^2 = \sum_{i=1}^{n} |y_i|^2=1,  \sum_{i=1}^{n} \alpha_i x_i \overline{y_i}=0\bigg\}.
		\end{eqnarray*}
		If all the weights are $1,$ the  Chebyshev radius $r$  is given by 
		 $$    r = \sup \bigg\{\bigg|\sum_{i=1}^{n} z_i x_i \overline{y_i} \bigg|:\sum_{i=1}^{n} |x_i|^2 = \sum_{i=1}^{n} |y_i|^2=1,  \sum_{i=1}^{n} x_i \overline{y_i}=0\bigg\}. $$
	\end{remark}
	
We end this article with this following remark.
	
	\begin{remark}
		Let us consider $\mathbb{X}= \mathbb{R}^m$ and $\mathbb{Y}= \mathbb{R}^n,$ both endowed with the Euclidean norm, in the least square problem defined in (\ref{least:eqn}). Let $A$ be an $n \times m$ matrix and  let $z=(z_1, z_2, \ldots, z_n) \in \mathbb{R}^n.$  Then the problem reduces to:
		\[
		\underset{x \in \mathbb{R}^m}{\text{min}} \|Ax-z\| = \Big [\sum_{k=1}^{m} (a_i^Tx- z_i)^2 \Big]^{\frac{1}{2}},
		\]
		where $a_i^T$ is the $i$-th row of $A.$
	Let $w \in \mathbb{R}^n$ satisfy the above minimization problem. Then using Proposition \ref{prop:least}, $(Aw-z) \perp Ax,$ for any $x \in \mathbb{X}.$ Therefore, $\langle Aw-z, Ax \rangle=0  \implies \langle A^TA w- A^Tz, x \rangle = 0,$ for any $x \in \mathbb{R}^n.$ This implies $A^TAw= A^T z.$ So the least square problem is equivalent to solving the above matrix equation. If $rank~A= n,$ then we get  $w= (A^TA)^{-1} A^T z.$
	\end{remark}
	
	\noindent \textbf{Declaration of interests.}\\
		The authors declare that they have no known competing financial interests or personal relationships that could have appeared to influence the work reported in this paper.

%\noindent \textbf{Data availability statement.} \\
%Data sharing not applicable to this article as no datasets were generated or analysed during the current study.

\end{document}